\definecolor{Red}{rgb}{1,0,0}
\definecolor{Blue}{rgb}{0,0,1}
\definecolor{Olive}{rgb}{0.41,0.55,0.13}
\definecolor{Green}{rgb}{0,1,0}
\definecolor{MGreen}{rgb}{0,0.8,0}
\definecolor{DGreen}{rgb}{0,0.55,0}
\definecolor{Yellow}{rgb}{1,1,0}
\definecolor{Cyan}{rgb}{0,1,1}
\definecolor{Magenta}{rgb}{1,0,1}
\definecolor{Orange}{rgb}{1,.5,0}
\definecolor{Violet}{rgb}{.5,0,.5}
\definecolor{Purple}{rgb}{.75,0,.25}
\definecolor{Brown}{rgb}{.75,.5,.25}
\definecolor{Grey}{rgb}{.5,.5,.5}
\definecolor{Black}{rgb}{0,0,0}
\def\path{{\tt path}}
\newcommand{\fcal}{\mathcal{F}}
\newcommand{\ical}{\mathcal{I}}
\newcommand{\kcal}{\mathcal{K}}
\newcommand{\qcal}{\mathcal{Q}}
\newcommand{\real}{\mathbb{R}}
\newcommand{\eps}{\varepsilon}
\newcommand{\bdm}{\begin{displaymath}}
\newcommand{\edm}{\end{displaymath}}
\newcommand{\bea}{\begin{eqnarray*}}
\newcommand{\eea}{\end{eqnarray*}}
\newcommand{\bean}{\begin{eqnarray}}
\newcommand{\eean}{\end{eqnarray}}
\newcommand{\expec}{\mathbb{E}}
\newcommand{\diff}{\mathrm{d}}
\newcommand{\onebf}{\mathbf{1}}
\newcommand{\tilv}{\tilde v}
\newcommand{\undfcal}{\underline{\fcal}}
\newcommand{\ovfcal}{\overline{\fcal}}
\newcommand{\undf}{\underline{f}}
\newcommand{\ovf}{\overline{f}}
\newcommand{\undff}{\underline{F}}
\newcommand{\ovff}{\overline{F}}
\newcommand{\undnu}{\underline{\nu}}
\newcommand{\ovnu}{\overline{\nu}}
\newcommand{\undmm}{\underline{M}}
\newcommand{\ovmm}{\overline{M}}
\newcommand{\undrho}{\underline{\rho}}
\newcommand{\ovrho}{\overline{\rho}}
\newcommand{\undnn}{\underline{N}}
\newcommand{\ovnn}{\overline{N}}
\newcommand{\undeta}{\underline{\eta}}
\newcommand{\oveta}{\overline{\eta}}
\newcommand{\undtt}{\underline{T}}
\newcommand{\ovtt}{\overline{T}}
\newcommand{\undtau}{\underline{\tau}}
\newcommand{\ovtau}{\overline{\tau}}
\newcommand{\unddd}{\underline{D}}
\newcommand{\ovdd}{\overline{D}}
\newcommand{\undsigma}{\underline{\sigma}}
\newcommand{\ovsigma}{\overline{\sigma}}
\newcommand{\undlambda}{\underline{\lambda}}
\newcommand{\ovlambda}{\overline{\lambda}}
\newcommand{\undss}{\underline{S}}
\newcommand{\ovss}{\overline{S}}
\newcommand{\undrr}{\underline{R}}
\newcommand{\ovrr}{\overline{R}}
\newcommand{\tildeqcal}{\widetilde{\qcal}}
\newcommand{\tildeq}{\tilde{q}}
\newcommand{\rmif}{\mathrm{if\ }}
\newcommand{\rmow}{\mathrm{o.w.}}
\newcommand{\ind}{\mathbf{1}}
\newcommand{\tildelambda}{\tilde{\lambda}}
\newcommand{\tildenu}{\tilde{\nu}}
\newtheorem{theorem}{Theorem}
\newtheorem{proposition}{Proposition}
\newtheorem{definition}{Definition}
\newtheorem{example}{Example}
\newtheorem{lemma}{Lemma}
\newenvironment{proof}{\noindent{\textbf{Proof:}}}{$\blacksquare$\vskip\belowdisplayskip}
\newenvironment{prevproof}[2]{\noindent {\bf {Proof of
{#1}~\ref{#2}:}}}{$\blacksquare$}
\title{First to Market is not Everything:\\ 
an Analysis of Preferential Attachment with Fitness}
\author{Christian Borgs\footnote{Microsoft Research. {\tt email:
borgs@microsoft.com}} \and Jennifer~T.~Chayes\footnote{Microsoft
Research. {\tt email: jchayes@microsoft.com}} \and
Constantinos Daskalakis\footnote{Computer Science, U.C. Berkeley.
Research done while the author was visiting Microsoft
Research.{\tt email: costis@cs.berkeley.edu}}
\and S\'ebastien Roch\footnote{Department of Statistics,
U.C.Berkeley. Research done while the author was visiting
Microsoft Research. {\tt email:
sroch@stat.berkeley.edu}}}
\begin{document}

\maketitle

\thispagestyle{empty}

\begin{abstract}

{The design of algorithms on complex
networks, such as routing, ranking or recommendation algorithms,
requires a detailed understanding of the growth
characteristics of the networks of interest, such as the Internet,
the web graph, social networks or online communities.} To this end,
preferential attachment, in which the popularity (or relevance) of
a node is determined by its degree, is a well-known and appealing
random graph model, whose predictions are in accordance with
experiments on the web graph and several social networks. However,
its central assumption, that the popularity of the nodes depends
only on their degree, is not a realistic one, since every node has
potentially some intrinsic quality which can differentiate its
attractiveness from other nodes with similar degrees.

{In this paper, we provide a rigorous analysis of {\em preferential
attachment with fitness}, suggested by Bianconi and Barab\'asi and
studied by Motwani and Xu, in which the degree of a
vertex is scaled by its quality to determine its attractiveness.}
Including quality considerations in the classical preferential
attachment model provides a much more realistic description of
many complex networks, such as the web graph, and allows to
observe a much richer behavior in the growth dynamics of these
networks. Specifically, depending on the shape of the distribution
from which the qualities of the vertices are drawn, we observe
three distinct phases, namely a {\em first-mover-advantage} phase, a
{\em fit-get-richer} phase and an {\em innovation-pays-off}
phase. We precisely characterize the properties of the quality
distribution that result in each of these phases and we compute
the exact growth dynamics for each phase. The dynamics provide
rich information about the quality of the vertices, which can be
very useful in many practical contexts, including ranking
algorithms for the web, recommendation algorithms, as well as the
study of social networks. Furthermore, the mathematical techniques
we introduce to establish these dynamics could be applicable to a
wide variety of problems.


\end{abstract}

\bigskip

\noindent\textbf{Keywords:} random graphs, preferential attachment, P\'olya urn
processes, Bose-Einstein condensation

\newpage

\section{Introduction}

{
In recent years, there has been a convergence of ideas coming from
computer science, social sciences and economic sciences as researchers in these fields
attempt to model and analyze the characteristics and dynamics 
of large complex networks, such as the web graph, social networks
and recommendation networks. From the computational perspective,
it has been recognized that the successful design of algorithms performed on
such networks, including routing, ranking and recommendation
algorithms, must take into account the social dynamics as well as
the technical properties and economic incentives that govern
network growth
\cite{DBLP:conf/stoc/Papadimitriou01,DBLP:conf/stoc/Raghavan06,DBLP:conf/focs/Kleinberg06}.}

\vspace{-0.3cm}\paragraph{Random Graph Models.} An appealing way to model the growth dynamics of these networks is
via random graph models. The well-studied Erd\"os-R\'enyi model is
not an appropriate description of these networks, because it is a
static rather than dynamic model, and more importantly, because
sparse graphs drawn from the Erd\"os-R\'enyi model have Poisson
degree distributions rather than the scale-free (power-law)
distributions observed in a variety of social phenomena
\cite{Zipf:49}, and verified by experiments on the World Wide Web
\cite{barabasi-1999-286,
DBLP:conf/sigcomm/FaloutsosFF99,DBLP:conf/cocoon/KleinbergKRRT99}---the 
latter seen as a massive graph with web pages being its
vertices and directed edges between vertices corresponding to
hyperlinks from one page to another.

Several models have been suggested which result in scale-free
distributions, probably the first being due to Yule~\cite{Yule:25}
and Simon~\cite{Simon1955}. In the context of scientific citations
power law distributions were observed by Lotka~\cite{Lotka1926},
and Gilbert~\cite{Gilbert1997} specifies a probabilistic model
supporting Lotka's law. Kleinberg et
al.~\cite{DBLP:conf/cocoon/KleinbergKRRT99} and Kumar et
al.~\cite{DBLP:conf/focs/KumarRRSTU00} suggest and study the {\em
copy model} which captures the power law distribution and
other connectivity properties of the World Wide Web, while other
models include works from Broder et al.~\cite{Broder&etal2000a},
Cooper and Frieze~\cite{Cooper&Frieze2001}, Drinea et al.~\cite{drinea@variations}, 
Krapivsky and Redner~\cite{Krapivsky&Redner2001}.

\vspace{-0.3cm}\paragraph{Preferential Attachment Models.}
One of the most natural and attractive models for network growth
is the {\em preferential attachment model}, suggested by
Barab\'asi and Albert~\cite{barabasi-1999-286} to model the web
graph, and originally proposed as the {\em cumulative advantage}
model by Derek de Solla Price in 1965~\cite{Price1965}. 
See e.g.~\cite{BollobasRST01, BollobasR04} for a rigorous treatment. Roughly
speaking, as time evolves, new vertices join the network by adding
several links to the vertices already present in the network in a
probabilistic fashion. The probability of linking to an existing
vertex is an increasing function, usually polynomial, in its
degree, which captures the intuitive fact that higher degree of a
vertex reflects higher relevance or popularity.

This model by itself has been rather successful in predicting the
graph structure of the web \cite{barabasi-1999-286}, at least as
an undirected graph. Nevertheless, there is an unsatisfactory
assumption underlying the model. 
{The popularity of a vertex
depends only on its degree. As a result, the prediction
of the model is the so-called  {\em first-mover-advantage}
phenomenon in which earlier vertices tend to have
significantly higher degrees than later ones, making it
hard for a vertex which enters late to compete with the
already established {\em hubs} of the network.} Moreover, the model
is completely symmetric with respect to vertices which enter at
similar times, since there is no modeling of how the intrinsic
quality of every vertex affects its growth in the network. 
{How is the quality
of vertices reflected in the
network structure and its dynamics?
How can one extract such information?}

{
To answer this type of questions we analyze a variant
of the preferential attachment model which explicitly models the
intrinsic quality of the vertices.} This model, introduced in the
context of the web by Bianconi and Barab\'asi \cite{BianconiBarabasi:01},
is usually called {\em preferential attachment with
fitness}. In this model, when a new vertex is created, it gets
assigned a quality parameter, henceforth called {\em fitness},
drawn from a given distribution, which scales its degree
to determine its attractiveness in the evolution of the network.
The resulting model provides a much more accurate description of
many real-world networks \cite{BianconiBarabasi:01}, but it is
also more difficult to analyze rigorously; see Bianconi and
Barab\'asi \cite{BianconiBarabasi:01} for heuristic arguments and Motwani
and Xu \cite{DBLP:conf/pods/MotwaniX06} for more precise---but
nevertheless heuristic in several aspects---arguments.

\vspace{-0.3cm}\paragraph{Our Results.} We provide the first---to our knowledge---rigorous analysis of
preferential attachment with fitness. We show that, depending on
the properties of the distribution from which the fitnesses are
drawn, henceforth called the {\em fitness distribution}, there is
a much richer behavior that an evolving network may exhibit than
what is predicted by the classical preferential attachment model.
We precisely characterize the possible evolutions of a complex
network and we specify the properties of the fitness distribution
resulting in each of them. More precisely, we show that, depending
on the fitness distribution, an evolving network can undergo one
of the following behaviors, or {\em phases}:
\begin{itemize}
{
\item the {\em first-mover-advantage} phase, which results
from flat fitness distributions and corresponds to the power-law
behavior predicted by the classical preferential attachment model;

\item the {\em fit-get-richer} phase, in which vertices of
higher fitness grow faster than those of smaller fitness; the behavior here is a
power-law within each fitness value, but the
tail exponent decreases as the fitness increases;

\item the {\em innovation-pays-off} phase, in which roughly
speaking the competition for links results in a
constant fraction of the links continuously shifting
to ever larger fitness values;  this fraction of links
that ``escapes to infinity'' is independent of the network
size and is determined by the fitness distribution; such
behavior is not observed in the fit-get-richer phase.}
\end{itemize}

Our analysis is applicable to both discrete and continuous fitness
distributions, as well as bounded or unbounded ones, and we
provide precise criteria for the fitness distribution that specify
which of the above phases will arise. In fact, we discover some
property of the fitness distribution which exhibits a sharp phase
transition separating the latter evolution scenarios. Our results
are in accordance with the predictions of Bianconi and Barab\'asi
\cite{BianconiBarabasi:01} derived by mapping the evolving network
to a Bose gas in the thermodynamic limit. In this terminology, the
innovation-pays-off phase corresponds to the phenomenon of
{\em Bose-Einstein condensation}, whereby a constant fraction of
the particles condensate on the lowest energy level, corresponding
in the network context to the supremum of the fitness values.

A by-product of our technique is a
precise characterization of the {\em vertex dynamics} under
preferential attachment with fitness. More specifically, if a
vertex $v$ has fitness $f$, then our analysis implies that its
degree $d_v(t)$ at time $t$ scales as
\begin{equation}\label{eq:vertexdynamics}
d_v(t)\sim t^{{cf}},
\end{equation}
where $c$ is a
global constant determined by the fitness distribution. Hence, the
logarithm of the degree of the vertices directly reflects their
quality. This could suggest new directions in the design of ranking or
recommendation algorithms.

{
\vspace{-0.3cm}\paragraph{Proof Techniques.} 
The standard approach to analyze preferential attachment models
is to derive recursions (or differential equations), 
typically, of the expected number
of nodes of a given degree. See e.g.~\cite{Mitzenmacher:04}.
This type of technique relies
crucially on the fact that the number of nodes at any time
in the graph is deterministic---a quantity that arises
as the denominator in the recursion. However, in our case,
the relevant quantity is the number of nodes \emph{weighted
by their fitness} which, unfortunately, is a
\emph{random} variable. This turns out to complicate
significantly the analysis.
}

{
To obtain our results, 
we rely instead on a very different approach, one based
on the theory of P\'olya urn models. 
In P\'olya's classical urn scheme, an urn contains balls of two colors.
At each time step, a ball is drawn randomly from the urn
and returned along with an extra ball of the same color. 
This is clearly reminiscent of a preferential attachment
scheme and the connection between the two
models has previously been exploited, e.g. in~\cite{BeBoChSa:05}.
Here we use a generalized version of P\'olya's scheme (see e.g.~\cite{Janson:04}): 
1) we consider an arbitrary, but finite number of colors;
2) each ball is picked proportionally to a weight, or ``activity parameter'', associated to its color;
and 3) at each time step, the ball picked is returned along with a random number of balls of each
color, where the distribution of this ``random update vector'' depends on the color
of the ball drawn. 

We analyze the limiting behavior of the preferential attachment scheme with
fitness by coupling the growth process with specially crafted generalized
P\'olya urn models where the colors represent
connectivity properties of the evolving network, e.g.~the
cumulative degree of all vertices of a given fitness.
When the fitness distribution is concentrated on a finite number of atoms,
the correspondence is somewhat straightforward, although our coupling
appears to be novel and it allows to derive nontrivial generalizations of classic results very easily. 
More importantly,
we consider in fact general fitness distributions, including continuous
distributions, which in principle require an infinite number of colors
in the P\'olya urn model. 
Little is known about the behavior of generalized P\'olya urns
beyond the finite case, and we resort to various novel truncation
techniques to map the dynamics of our network to a finite urn process.
We expect that our techniques should be useful in a much more general context to the
analysis of previously unapproachable complex network growth models, which now may
be analyzed using infinite P\'olya urn models with techniques analogous to those
developed here.}

{

\subsection{Definitions and Main Result}

\vspace{-0.0cm}\paragraph{The Model.} 
The generalized preferential attachment model of Bianconi and
Barab\'asi which we analyze here is a random graph model 
defined as follows. 

\vspace{-0.3cm}\begin{definition}[Preferential Attachment Scheme with Fitness]
Let $\fcal \subseteq \real_+$ be a set of fitnesses and $\qcal$ a
distribution over fitnesses such that $\int_{\fcal}\diff \qcal(f)=1$.
%
%
The \emph{preferential attachment process with fitness} 
begins with one vertex of fitness $f\in \fcal$ drawn
according to $\qcal$ and a self-loop on that vertex. Then, at
every time step $t$, a new vertex is added to the graph, which has
fitness picked independently according to $\qcal$ and is attached
to an old vertex $v$ with probability proportional to $f_v \cdot
d_{v,{t-1}}$, where $f_v$ is the fitness of vertex $v$ and
$d_{v,{t-1}}$ its degree at step $t-1$. We denote by
$G_n = (V_n, E_n)$ the graph at time $n$.
We sometimes refer to this
process as the {\em $(\fcal,\qcal)$-chain}.
\end{definition}

\vspace{-0.3cm}\noindent It turns out that the case of
unbounded fitnesses is rather uninteresting 
(see Appendix~\ref{sec:app:continuous_unbounded}) 
and hereon we
assume 
that $\sup\{f\,:\, f\in \fcal\} = h$ for some $h < +\infty$. 
Furthermore, we consider three main cases for $\fcal$:
either $\fcal$ is discrete---finite or countable---with
$\qcal$ strictly positive on $\fcal$,
or
$\fcal$ is the interval $[0,h]$ and $\qcal$ admits a strictly positive continuous
density on $(0,h)$. We say that $(\fcal, \qcal)$ is \emph{regular} in such cases.
Our results extend to more general fitness distributions but we restrict
ourselves to the regular case here. Also, the process above constructs only undirected trees.
However, our techniques can be easily extended to directed scale-free graphs as defined
in~\cite{BoBoChRi:03}. We omit the details.

\vspace{-0.3cm}\paragraph{Main Result.} 
Our basic result concerns the distribution of links across fitnesses
as $n\to +\infty$. Let $[a,b] \subseteq [0,h]$ with $a\leq b$ and denote
by $M_{n,[a,b]}$ the number of edge endpoints with fitness in
$[a,b]$ in $G_n$. Let $\lambda_0$ be the (unique) solution in $[h,+\infty)$
of
\begin{equation}\label{eq:intro_be}
\ical(\lambda_0) \equiv \int_\fcal \frac{f}{\lambda_0-f}\diff \qcal(f) = 1,
\end{equation} 
if it exists and let $\lambda_0 = h$ otherwise.
Our main result is the following.
\begin{theorem}[Basic Result]\label{thm:intro_basic}
Assume $(\fcal,\qcal)$ is regular. Then, for all $[a,b] \subseteq [0,h)$
with $a\leq b$, we have
\begin{equation*}
\frac{M_{n,[a,b]}}{n} \to \lambda_0 \int_{\fcal\cap[a,b]} \frac{1}{\lambda_0-f}\diff \qcal(f) 
\equiv \nu_{[a,b]},
\qquad
\frac{M_{n,[a,h]}}{n} \to 2 - \lambda_0 \int_{\fcal\cap[0,a]} \frac{1}{\lambda_0-f}\diff \qcal(f)
\equiv 2 - \nu_{[0,a]},
\end{equation*}
almost surely as $n \to +\infty$.
\end{theorem}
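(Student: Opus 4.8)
The plan is to reduce the general regular case to the case of finitely many fitnesses by a truncation argument, and to handle that finite case by coupling the $(\fcal,\qcal)$-chain with a suitably crafted generalized P\'olya urn.

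First I would treat $\fcal = \{f_1,\dots,f_k\}$ finite. Introduce an urn with $k$ colors, a ball of color $i$ standing for an edge endpoint incident to a vertex of fitness $f_i$, and give color $i$ the activity parameter $f_i$; then the number $M_{n,i}$ of color-$i$ balls equals the number of such endpoints in $G_n$. The dynamics match a generalized P\'olya scheme: the color of the drawn ball is $i$ with probability proportional to $f_i M_{n,i}$ (the vertex the new node attaches to, so color $i$ gains a ball), and independently the new vertex has fitness $f_j$ with probability $q_j$ (a ball of color $j$), so the random update given color $i$ is $e_i+e_J$ with $J\sim\qcal$ independent; the updates are bounded, so all moment hypotheses hold trivially. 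Since $\fcal$ is finite and $\qcal$ is strictly positive on it, $\ical(\lambda)\to+\infty$ as $\lambda\downarrow h$, hence $\lambda_0>h$ and the urn is nondegenerate (irreducible, with a simple dominant eigenvalue), and standard generalized P\'olya urn asymptotics (e.g.~\cite{Janson:04}) give $M_{n,i}/n\to\nu_i$ almost surely, where $\nu$ is the unique fixed point of $\nu_i = f_i\nu_i/S + q_i$ with $S=\sum_j f_j\nu_j$. Solving gives $\nu_i = Sq_i/(S-f_i)$, and substituting back into $S=\sum_j f_j\nu_j$ forces $\sum_j f_j q_j/(S-f_j)=1$, i.e.~$S=\lambda_0$; the same identity gives $\sum_i\nu_i = \lambda_0\sum_i q_i/(\lambda_0-f_i)=2$, consistent with the deterministic identity $M_n = 2n+2$. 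Summing over $i$ with $f_i\in[a,b]$ yields exactly the claimed formula in the finite case.

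For general regular $(\fcal,\qcal)$ I would fix a target $[a,b]\subseteq[0,h)$ and approximate it from both sides. Given $\delta>0$ with $b<h-\delta$, partition $[0,h-\delta]$ into fine sub-intervals whose breakpoints include $a$ and $b$, adjoining the top block $\fcal\cap[h-\delta,h]$, and build two auxiliary \emph{finite}-fitness chains, an upper one rounding each vertex's fitness to the right endpoint of its block and a lower one rounding it to the left endpoint. Using that the attachment weights $f_v d_{v,t-1}$ are monotone in the fitnesses (so that the weighted-degree total $\sum_j f_j M_{n,j}$, which controls $\lambda_0$, is squeezed between the two auxiliary weighted-degree totals), one sandwiches $M_{n,[a,b]}/n$ between the corresponding block counts of the two auxiliary chains — either by an explicit monotone coupling or, failing that, by showing the empirical quantities of the true chain are tight and every subsequential limit satisfies the same fixed-point relations. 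The finite-case result then gives almost-sure limits $\nu^{-}_{[a,b]}(\delta)\le\liminf_n M_{n,[a,b]}/n\le\limsup_n M_{n,[a,b]}/n\le\nu^{+}_{[a,b]}(\delta)$, with $\nu^{\pm}_{[a,b]}(\delta)$ determined by the roots $\lambda_0^{\pm}(\delta)$ of the discretized versions of~\eqref{eq:intro_be}. Letting the mesh and $\delta$ tend to $0$, I would show $\lambda_0^{\pm}(\delta)\to\lambda_0$ and hence $\nu^{\pm}_{[a,b]}(\delta)\to\nu_{[a,b]}$, using the regularity of $(\fcal,\qcal)$ (strictly positive atoms, or a strictly positive continuous density) together with the monotonicity and continuity of $\lambda\mapsto\ical(\lambda)$ and dominated/monotone convergence of the integrals. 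The second assertion, for $M_{n,[a,h]}$, then follows at once from the first and from $M_{n,[0,h]}/n = M_n/n\to 2$.

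The main obstacle will be this truncation step, on two counts. First, one must make the comparison between the true chain and the finite auxiliary chains rigorous: perturbing a single vertex's fitness alters the whole future trajectory, so a clean monotone coupling is delicate and may need to be replaced by a subsequential-limit argument driven by the fixed-point equations. Second, and more seriously, one must control the near-supremum block $\fcal\cap[h-\delta,h]$: one has to show its contribution is captured correctly in the limit in \emph{both} regimes — $\lambda_0>h$, where the argument is essentially continuity of $\ical$, and $\lambda_0=h$ (Bose--Einstein condensation), where the roots $\lambda_0^{\pm}(\delta)$ must be forced down to $h$ while the excess mass $2-\nu_{[0,h)}$ piles up on the top block — so that the discretized Bose--Einstein constants converge to the single value $\lambda_0$ uniformly enough to pin down $\nu_{[a,b]}$.
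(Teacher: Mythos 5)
Your finite-fitness step is essentially the paper's own argument (their Proposition ``Fitness Alone''): same urn with activities $f_i$, same update $e_i+e_J$ with $J\sim\qcal$, same eigenvalue/fixed-point computation giving $\lambda_1=\lambda_0$ and $\nu_i=\lambda_0 q_i/(\lambda_0-f_i)$, $\sum_i\nu_i=2$. The gap is in the reduction from general $(\fcal,\qcal)$ to the finite case. The two-sided ``round up / round down'' sandwich you propose is not justified, and in the form stated it cannot hold: both the true chain and either rounded chain have exactly $2n+2$ edge endpoints, so blockwise domination of the counts cannot hold for all blocks simultaneously; and for a fixed block there is no preferred direction either, because rounding changes the weights of \emph{all} competing blocks at once, so the relative attractiveness $f_v d_{v,t-1}/\sum_u f_u d_{u,t-1}$ of a given block can move either way. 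The paper itself flags exactly this trap (``the degree by itself is not sufficient to guarantee the domination of probabilities''), which is why its couplings never perturb all fitnesses: in the countable case only the tail $\{f_j: j>I\}$ is modified, and it is sent to the \emph{extreme} values $0$ (upper chain) or $h$ (lower chain), so that the relative weight of every retained fitness moves in a known direction and a step-by-step coupling of the drawn fitness values can be maintained. Your fallback (``tightness plus subsequential limits satisfying the fixed-point relations'') is not a repair but a different, unproved argument: the whole reason the urn machinery is invoked is that the normalizer $\sum_j f_j M_{n,j}$ is random, so the fixed-point relations for subsequential limits are precisely what is hard to establish; moreover in the condensation regime there is no root $\lambda_0>h$ of (\ref{eq:intro_be}), so ``the same fixed-point relations'' do not by themselves identify the limit.

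What the paper actually does in the continuous case is asymmetric and avoids your obstacle. It builds a single discretized urn that gives a \emph{lower} bound only: bin $i$ carries the rounded-up activity $\ovf_i$ but each attachment to bin $i$ is accepted only with probability $\undf_i/\ovf_i$ (a rejection variable $\gamma_i$), the rejected mass being dumped into an extra bin of activity $h$; this keeps the numerator of the comparison probability at $\undf_i$ while keeping the denominator as large as if fitnesses were at $h$, which is exactly what a naive round-down chain fails to do. The matching upper bounds then come for free: in the fit-get-richer phase from the conservation law $M_{n,[0,h]}=2(n+1)$ (lower bounds on all subintervals summing to the full mass force the upper bounds), and in the innovation-pays-off phase from a one-sided coupling with the chain whose fitness density is killed above $h-\eps$, which lies in the fit-get-richer phase (since $g(h-\eps)>0$ makes the integral diverge) and whose $\lambda_0^{(\eps)}$ is squeezed into $[h-\eps,h)$, hence tends to $h$. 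So the overall architecture you describe (finite urns plus discretization plus convergence of the discretized roots) is the right shape, but the central comparison step needs these one-sided, extreme-value devices rather than a two-sided rounding coupling, and without them your proof does not go through.
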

A surprising behavior arises when (\ref{eq:intro_be}) has no solution in $[h,+\infty)$,
or equivalently when $\ical(h) < 1$. Indeed, in such a case, it is easy to check that
$\nu_{[0,h-\eps]} \leq 1 + \ical(h) < 2$ for all $\eps>0$ 
even though we expect $\lim_{\eps \to 0} \nu_{[0,h-\eps]} = 2$ since
for all $n$, $n^{-1}M_{n,[0,h]} = 2$ (i.e.~each edge has two endpoints).
In other words, it appears that a constant fraction of edges is ``missing'' in the limit.
The missing fraction actually ``escapes to $h$'' which leads to what we call
the innovation-pays-off phase as described above. To get a better intuition for the
existence of a solution in (\ref{eq:intro_be}), consider the example $\qcal \sim \mathrm{Beta}(\alpha, \beta)$.
In Example~\ref{example:beta} of Appendix~\ref{sec:becont}, we show there is a solution
if and only if $\beta\leq \alpha + 1$. For a fixed $\alpha$, a large $\beta$ indicates
a ``fast decay'' to $0$ at $1$ while a small $\beta$ leads to a ``fatter tail'' around $1$.
A solution to (\ref{eq:intro_be}) exists in the latter case, e.g.~in the uniform case.
In other words, the innovation-pays-off regime requires a more ``rarefied'' high fitness
population.

\vspace{-0.3cm}\paragraph{Dynamics of the Innovation-Pays-Off Phase.}
In order to understand (informally) the dynamics of the innova\-tion-pays-off phase, 
fix a time $t^*$
and let $f^*$ be the largest fitness among vertices
present in the network at time $t^*$. Note that
\begin{itemize}

\item at time $t^*$, the cumulative fraction of the links shared
by vertices of fitness up to $f^*$ is $2$,
since every edge is accounted for twice;

\item now, consider the network in the limit $t=+\infty$; by
Theorem~\ref{thm:intro_basic} and the discussion above, 
the fraction of links shared among vertices
of fitness up to $f^*$ is at most $1+\ical(h)$;
therefore at least a fraction $1-\ical(h)$ of links is shared
among vertices of fitness larger than
$f^*$, vertices which, by definition, were not present at time $t^*$.
\end{itemize}
This is
the ``signature'' of the innovation-pays-off phase: a constant
fraction of the links changes hands toward higher and higher
fitness values. 

\vspace{-0.3cm}\paragraph{Power Laws and Vertex Dynamics.} In fact, we can prove more than
Theorem~\ref{thm:intro_basic}. As stated below in Theorems~\ref{thm:fitgetrich}
and~\ref{thm:boseeinstein} and their counterparts in the continuous case, 
we exhibit power laws for the degree distributions on the nodes of a given
fitness and we get a tail exponent of $\lambda_0 f^{-1}$ where $f$ is the
given fitness. 
See Section~\ref{sec:countable}. Also, as discussed above, we can prove
vertex dynamics of the form (\ref{eq:vertexdynamics}).
Such result is proved by considering a continuous-time
embedding of the process as in~\cite{Janson:04}. Details are omitted.
The constant $c$ in (\ref{eq:vertexdynamics}) is in fact $\lambda_0^{-1}$.

\vspace{-0.3cm}\paragraph{Proof Sketch.} As we mentioned before, the 
basic idea of the proof of Theorem~\ref{thm:intro_basic}
(as well as of the power law results in Theorems~\ref{thm:fitgetrich}
and~\ref{thm:boseeinstein} below) is to couple
the preferential attachment process with P\'olya urn models. The first
step is the analysis of the case $\fcal$ finite. There we proceed by truncating
large degrees and associating a color of a specially designed P\'olya
process to each pair (degree, fitness). The limit theory of P\'olya processes
then reduces the problem to an eigenvector computation of an appropriately
defined matrix (see Section~\ref{sec:polya}). This computation appears to be
tricky but turns out to be manageable, as described in Appendix~\ref{sec:app:discrete_finite}. 

The countable and continuous cases are significantly more challenging since P\'olya urns
with infinite---whether countable or uncountable---colors are poorly understood. Instead,
we use further truncation and approximation techniques to couple
the infinite cases with finite cases. In Section~\ref{sec:countable}, we
illustrate this idea on the somewhat easier special case of $\fcal = \{f_j\}_{j=1}^{+\infty}$
increasing. There we need two finite P\'olya models---a lower bound and an upper bound---which 
are obtained by truncating $\fcal$ and mapping the remaining fitness values
to either $0$ or $h$. The general discrete case as well as the continuous case
require a much more sophisticated approach which is detailed in Appendix~\ref{sec:app:continuous}.

\vspace{-0.3cm}\paragraph{Organization of the Paper.}
We start with a brief overview of generalized P\'olya urn models
in Section~\ref{sec:polya} followed by our treatment of
preferential attachment for
finite fitness distributions in Section~\ref{sec:finitetype}. The main steps of the general proof
are illustrated in Section~\ref{sec:countable} in the special case where
$\fcal = \{f_j\}_{j\geq 1}$ is countable and increasing. Most proofs
are relegated to the appendix. Most notably, for lack of space the particularly
interesting analysis of the 
continuous case is {\em completely} relegated to Appendix~\ref{sec:app:continuous}.

\vspace{-0.3cm}\paragraph{Notation.}
We denote by $e_i$ the unit vector along the $i$-th axis
(usually the dimension is clear). The notation $\onebf_{S}$ denotes
the indicator of the event $S$.
}

\section{Generalized P\'olya Urns}\label{sec:polya}

{
Our results are obtained through an appropriate mapping of the preferential fitness
process to a finite generalized P\'olya urn scheme. 
We introduce here the basic limit theory of generalized P\'olya urn models keeping our
notation consistent with the presentation of
Janson~\cite{Janson:04}, with the exception of our matrix $A$
which is the transpose of Janson's, in accordance with common
practice in the P\'olya urn literature.}

\vspace{-0.5cm}\paragraph{Definition of the P\'olya Urn Process.} We have $q <
+\infty$ bins (corresponding to the colors in the original
P\'olya model described in the Introduction). 
Each bin $i\le q$ is assigned a fixed activity
$a_i$, $0 \leq a_i < +\infty$. For $n \geq 0$, let
\begin{equation*}
X_n = (X_{n,1},\ldots,X_{n,q}),
\end{equation*}
where $X_{n,i}$ is the number of balls in bin $i$
at time $n$. The initial load is given by $X_0$, which may be random or deterministic.
Each bin, say $i$, also has a random vector $\xi_i = (\xi_{i,1},\ldots,\xi_{i,q})$ with
integer coordinates.
The process is defined as follows. At time $n$, we pick one bin. Bin $i$ is chosen
with probability proportional to $a_i X_{n-1,i}$. If bin $i$ is picked, we draw
an independent copy $\xi^{(n)}_{i}$ of $\xi_{i}$ and update $\{X_n\}_{n\geq 0}$ according to
\begin{equation*}
X_n = X_{n-1} + \xi^{(n)}_{i}.
\end{equation*}

\vspace{-0.5cm}\paragraph{Basic P\'olya Urn Result.} The limiting behavior of
the P\'olya Urn process described above can be characterized in
terms of the $q\times q$ matrix $A$ with entries
\begin{equation*}
A_{i,j} = a_i \expec[\xi_{i,j}],
\end{equation*}
assuming conditions (A1)-(A6) in~\cite{Janson:04} are satisfied.
In fact, we will only need to use the more general assumption
described in Remark 4.2 of~\cite{Janson:04}. Roughly speaking, we
require that:
\begin{itemize}
\item The urn process is well-defined (see the definition of {\em
tenable} in Remark 4.2 of~\cite{Janson:04}). Essentially, we
require that the number of balls remains nonnegative at all times
with probability 1.

\item The matrix $A$ satisfies a slight generalization of
irreducibility and the initial load is positive on a ``dominating
type.'' This generalization allows for dummy bins that ``count
certain events.'' (See Section 3 ``Limits for urns''
of~\cite{Janson:04}.)

\item The vectors $\xi_i$ have finite second moments. In our
application, the $\xi_i$'s will actually be bounded.
\end{itemize}
We refer the reader to~\cite{Janson:04} for more details. Under
these conditions, it is not hard to see that $A$ has a unique
largest positive eigenvalue $\lambda_1$ with corresponding
positive left eigenvector $v_1$ and right eigenvector $u_1$ (apply
the Perron-Frobenius theorem to $A + \alpha I$ for an appropriate
$\alpha$). We choose $u_1, v_1$ to satisfy $a\cdot v_1 = 1$ and
$u_1 \cdot v_1 = 1$ where $a$ is the vector of activities. The
following theorem characterizes the vector $X_n$.
\begin{theorem}[Limit of Finite Urns~\cite{AthreyaNey:72}; Theorem~3.21 in~\cite{Janson:04}]\label{thm:basic}
Assume conditions (A1)-(A6) of \cite{Janson:04} are satisfied.
Conditioned on essential non-extinction (see~\cite{Janson:04}) we
have
\begin{equation*}
\frac{X_n}{n} \to \lambda_1 v_1,
\end{equation*}
almost surely as $n \to +\infty$.
\end{theorem}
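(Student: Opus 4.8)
The plan is to prove Theorem~\ref{thm:basic} via the classical continuous-time embedding of Athreya and Karlin, which realizes the discrete urn as the jump chain of a multitype Markov branching process. First I would attach to every ball of type $i$ an independent exponential clock of rate $a_i$; when the clock of a type-$i$ ball rings, replace that ball by $e_i + \xi_i$ balls, that is, keep it and add a fresh independent copy of the update vector $\xi_i$, handing each resulting ball a new clock. Writing $\mathbf{Z}(s)$ for the vector of ball counts at time $s$ and $0 \le \tau_1 < \tau_2 < \cdots$ for the successive ringing times, memorylessness of the exponentials shows that, given the current configuration, the next clock to ring belongs to a type-$i$ ball with probability proportional to $a_i Z_i(s)$; hence the embedded jump chain $(\mathbf{Z}(\tau_n))_{n\ge 0}$ is distributed exactly as $(X_n)_{n\ge0}$. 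Tenability of the urn ensures this construction never gets stuck and no coordinate becomes negative, and since every ball fires at rate at most $\max_i a_i$ there is no explosion, so $\tau_n \to +\infty$ with every $\tau_n$ finite.

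Next I would record the first-moment structure. As $\mathbf{Z}$ is a Markov branching process whose infinitesimal mean increment when a type-$i$ ball fires is $a_i \expec[\xi_i]$, its mean satisfies $\frac{\diff}{\diff s}\expec[\mathbf{Z}(s)\mid \mathbf{Z}(0)] = \expec[\mathbf{Z}(s)\mid \mathbf{Z}(0)]\,A$, hence $\expec[\mathbf{Z}(s)\mid \mathbf{Z}(0)] = \mathbf{Z}(0)\,e^{sA}$ with $A$ exactly the matrix in the statement. Applying Perron--Frobenius to $A + \alpha I$ for $\alpha$ large --- legitimate under the (generalized) irreducibility hypothesis, using that $A$ has nonnegative off-diagonal entries --- gives the simple dominant eigenvalue $\lambda_1 > 0$ with strictly positive left and right eigenvectors $v_1,u_1$, normalized by $a\cdot v_1 = 1$ and $u_1\cdot v_1 = 1$. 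It follows that $e^{sA} = e^{\lambda_1 s}(\Pi + o(1))$ as $s\to+\infty$, where $\Pi$ is the rank-one spectral projector onto the dominant eigendirection, satisfying $\mathbf{w}\,\Pi = (\mathbf{w}\cdot u_1)\,v_1$ for every row vector $\mathbf{w}$.

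The heart of the argument is the almost sure growth of $\mathbf{Z}(s)$ itself. By the multitype analogue of the Kesten--Stigum theorem for supercritical Markov branching processes --- where the finite-second-moment (equivalently $L\log L$) hypothesis on the $\xi_i$ is exactly what is needed --- on the event of essential non-extinction one has
\[
e^{-\lambda_1 s}\,\mathbf{Z}(s) \;\longrightarrow\; W\,v_1 \qquad \as,
\]
for a scalar random variable $W$ with $W > 0$ a.s.\ on that event. I would obtain this by noting that $s \mapsto e^{-\lambda_1 s}\,\mathbf{Z}(s)\cdot u_1$ is a nonnegative martingale (because $e^{sA}u_1 = e^{\lambda_1 s}u_1$, together with the branching-Markov property), establishing its $L^2$-boundedness from the second-moment assumption so that it converges in $L^1$ to a limit $W$ with $\expec[W]>0$, excluding $\{W = 0\}$ on non-extinction by irreducibility, and finally forcing the whole vector $e^{-\lambda_1 s}\mathbf{Z}(s)$ to align with $v_1$ using the asymptotics of $e^{sA}$ above.

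Finally I would transfer back to discrete time. The total ringing intensity at time $s$ is $a\cdot\mathbf{Z}(s) \sim W e^{\lambda_1 s}$ (using $a\cdot v_1 = 1$), so by the law of large numbers for the associated counting process the number of jumps by time $s$ satisfies $N(s) \sim \frac{W}{\lambda_1}e^{\lambda_1 s}$; evaluating at $s = \tau_n$ gives $e^{\lambda_1\tau_n} \sim \lambda_1 n / W$, whence
\[
\frac{X_n}{n} = \frac{\mathbf{Z}(\tau_n)}{n} \sim \frac{W\,v_1\,e^{\lambda_1\tau_n}}{n} \;\longrightarrow\; \lambda_1 v_1 \qquad \as
\]
Here the random factor $W$ cancels, which is precisely why the limit is deterministic. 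I expect the main obstacle to be exactly this branching-side ingredient: the almost sure (not merely $L^1$) convergence $e^{-\lambda_1 s}\mathbf{Z}(s)\to W v_1$ together with $\prob(W > 0 \mid \text{non-extinction}) = 1$, which requires controlling the non-principal part of $e^{sA}$, verifying uniform integrability of the martingale from the moment hypothesis, and ruling out $W=0$ through an irreducibility-driven Kesten--Stigum dichotomy. The careful treatment of the embedding when some diagonal updates $\xi_{i,i}$ are negative and the precise notion of ``essential non-extinction'' for a merely dominated, possibly reducible $A$ are secondary but genuine technicalities --- exactly where the cited work of Athreya--Ney and Janson does the heavy lifting.
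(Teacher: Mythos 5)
This theorem is quoted rather than proved in the paper --- it is imported wholesale from Athreya--Ney and from Theorem~3.21 of Janson --- and your sketch reproduces essentially the same argument those references use: the Athreya--Karlin exponential-clock embedding, the Perron--Frobenius spectral decomposition of $A$, the Kesten--Stigum-type almost sure convergence $e^{-\lambda_1 s}\mathbf{Z}(s)\to W v_1$ with $W>0$ on non-extinction, and the cancellation of $W$ when passing back to the jump chain via $e^{\lambda_1\tau_n}\sim \lambda_1 n/W$. The only caveat is that the paper relies on the relaxed hypotheses of Janson's Remark~4.2 (subtraction of several balls under tenability, dummy bins of zero activity and generalized irreducibility), so the embedding and the non-extinction bookkeeping must be carried out in that generality --- exactly the technicality you flag at the end, which the cited works handle.
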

In our applications of Theorem \ref{thm:basic}, it will be easy to
establish that ``essential extinction'' is not possible.

\section{Preferential Attachment: Finite Distributions}\label{sec:finitetype}

{
In this section, we treat the case $\fcal = \{f_j\}_{j \in J}$
where $J$ is finite---which we sometimes refer to as the {\em finite-type case}. 
This will form the basic step in the analysis
of the countable
and continuous cases. Without loss
of generality, we take $\{f_j\}_{j \in J}$ increasing.
We analyze separately the distribution of degrees within each fitness value
(Section \ref{sec:first-mover-advantage}) and the distribution of links 
across
fitness values (Section \ref{sec:among_fitnesses}). We
then combine the two results in Section
\ref{sec:full}. Note that, as we describe below, only the first-mover-advantage
and fit-get-richer behaviors arise in the finite-type case. 
}


\subsection{Flat Fitness Distributions: First-Mover-Advantage}
\label{sec:first-mover-advantage}

Suppose first that $J=1$. This is the standard preferential attachment
model, which is well understood (see e.g.~\cite{Mitzenmacher:04}
and references therein). We rederive the degree distribution by
first mapping to a P\'olya urn process and then applying
Theorem~\ref{thm:basic}. The mapping is illustrative of our
technique. Let $L_{n,k}$ be the number of vertices of degree $k$
at time $n$; set $\mu_1 = \frac{2}{3}$ and, for $k \geq 2$,
\begin{equation*}
\mu_k = \frac{2}{3}\prod_{l=2}^k \frac{l-1}{l+2} =
\frac{4}{k(k+1)(k+2)} \sim k^{-3}.
\end{equation*}
In particular, $\{\mu_k\}_{k\geq 1}$ is a {\em power law with tail
exponent $2$}.
\begin{proposition}[1-Fitness Case; see e.g.~\cite{Mitzenmacher:04}]\label{prop:1fit}
For all $k \geq 1$,
\begin{equation*}
\frac{L_{n,k}}{n} \to \mu_k
\end{equation*}
almost surely as $n \to +\infty$.
\end{proposition}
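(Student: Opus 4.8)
The idea is to set up a generalized Pólya urn whose bins count vertices by degree, then read off the limiting fractions from Theorem~\ref{thm:basic}. Since degrees are unbounded, I first need to truncate: fix a large integer $K$ and use bins indexed by $k = 1, 2, \ldots, K$, where bin $k < K$ counts vertices of degree exactly $k$, and bin $K$ is a "collector" counting all vertices of degree $\geq K$. All activities are equal, $a_k = 1$, because in the one-fitness model a vertex is chosen proportionally to its degree alone — but here we want to pick an *edge endpoint*, so the natural move is to let each bin $k$ have activity $a_k = k$, or equivalently to have each ball represent one unit of degree. I would take the cleaner route: one ball per vertex, bin $k$, activity $a_k = k$, so that picking a ball with probability proportional to $a_k X_{n-1,k}$ exactly reproduces preferential attachment (the total activity-weighted mass is $\sum_k k X_{n-1,k} = 2n-1$, the number of edge endpoints). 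When a new vertex arrives and attaches to an old vertex of degree $k$: the old vertex moves from bin $k$ to bin $k+1$, and the new vertex (now degree $1$) enters bin $1$. So the update vector $\xi_k$ is deterministic: $\xi_k = e_{k+1} - e_k + e_1$ for $k < K-1$; for $k = K-1$ it is $e_K - e_{K-1} + e_1$; and for $k = K$ (old vertex already in the collector) it is just $e_1$ (the collector keeps the vertex, only the new degree-1 vertex is added). One subtlety: the very first step has a self-loop, giving the initial vertex degree $2$; this only affects $X_0$, which the limit theorem allows to be arbitrary.

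With $A_{k,j} = a_k \mathbb{E}[\xi_{k,j}] = k\,\xi_{k,j}$ (deterministic), I would verify the hypotheses invoked before Theorem~\ref{thm:basic}: the urn is tenable (balls never go negative — each update either moves a ball up one bin while adding a fresh ball in bin $1$, or just adds a ball in bin $1$); $A$ is irreducible in the generalized sense (from any bin one reaches bin $1$ via $e_1$, and from bin $1$ one can climb to any bin), with a dominating type; and $\xi_k$ is bounded, hence has finite second moments. Then $A$ has a unique largest eigenvalue $\lambda_1$ with positive left eigenvector $v_1$, normalized by $a\cdot v_1 = 1$, and Theorem~\ref{thm:basic} gives $X_{n,k}/n \to \lambda_1 (v_1)_k$. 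The eigenvalue equation $A^\top$-style — actually, since our $A$ is already Janson's transpose, I want the left eigenvector $v_1 A = \lambda_1 v_1$, i.e. $\sum_k (v_1)_k A_{k,j} = \lambda_1 (v_1)_j$ — unwinds into the recursion $(j-1)(v_1)_{j-1} - j(v_1)_j + [\text{contribution to bin }1] = \lambda_1 (v_1)_j$. The bin-$1$ column collects $+1$ from *every* bin, so its equation reads $\sum_k k (v_1)_k \cdot 1 - 1\cdot(v_1)_1 = \lambda_1(v_1)_1$ after accounting for the $-e_1$... let me be careful: $A_{1,1} = 1\cdot(\xi_{1,1}) = 1\cdot(1 - 1 + 1) = 1$ since bin $1$ also loses and regains in slot $1$; wait, $\xi_1 = e_2 - e_1 + e_1 = e_2$, so $A_{1,1} = 0$ and $A_{1,2}=2$. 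Redoing: $\xi_k = e_{k+1} - e_k + e_1$ means for $k=1$, $\xi_1 = e_2 - e_1 + e_1 = e_2$. Good. Then for $j=1$: $\sum_{k\neq 1} k (v_1)_k \cdot [\xi_k]_1 = \sum_{k \geq 2} k (v_1)_k = \lambda_1 (v_1)_1$; for $j \geq 2$: $(j-1)(v_1)_{j-1}\cdot 1 - j(v_1)_j\cdot 1 = \lambda_1 (v_1)_j$ (the $-e_k$ term contributes $-1$ to column $k$ when $k \geq 2$). Also $\sum_k k(v_1)_k = 1$ by normalization $a \cdot v_1 = 1$; combined with the $j=1$ equation, $1 - (v_1)_1 = \lambda_1 (v_1)_1$, so... and counting total balls, $\|X_n\|_1$ grows by exactly $1$ per step (one new vertex), forcing $\lambda_1 \sum_k (v_1)_k = 1$. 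From the recursion $j(v_1)_j = (j-1)(v_1)_{j-1} - \lambda_1(v_1)_j$, i.e. $(v_1)_j = \frac{j-1}{j+\lambda_1}(v_1)_{j-1}$; choosing $\lambda_1$ to make things consistent gives $\lambda_1 = 1/2$ (from $\sum k (v_1)_k = 1$ and the telescoping), and then $(v_1)_j = \frac{j-1}{j+1/2}(v_1)_{j-1}$, so $\lambda_1 (v_1)_j = \frac12 (v_1)_j$ should equal $\mu_j$. One checks $\mu_j = \frac23\prod_{l=2}^j \frac{l-1}{l+2}$ matches $\frac12(v_1)_j$ after pinning down $(v_1)_1$ via normalization — a routine induction.

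The main obstacle is not the eigenvector computation (which is a standard telescoping product, recognizably the $k^{-3}$ law) but the truncation argument: Theorem~\ref{thm:basic} only gives the limit for the $K$-bin urn, so $X_{n,K}^{(K)}/n \to \lambda_1 (v_1^{(K)})_K$, and I need $(v_1^{(K)})_k \to (\text{true value})$ and $L_{n,k} = X_{n,k}^{(K)}$ for $k < K$ (true for the actual process, since truncation only merges high-degree bins and a degree-$k$ vertex with $k<K$ behaves identically). So in fact for each fixed $k$, choosing any $K > k$, the urn's bin $k$ exactly equals $L_{n,k}$, and the limit $\lambda_1(v_1^{(K)})_k$ is independent of $K$ (the recursion for small $j$ doesn't see $K$), equal to $\mu_k$. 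Hence no genuine limiting-exchange is needed — the truncation is exact for any fixed $k$. I would close by remarking that the tail asymptotics $\mu_k \sim k^{-3}$ follow from $\frac{4}{k(k+1)(k+2)}$, confirming the tail exponent $2$ claimed in the statement. $\blacksquare$
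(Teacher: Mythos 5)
Your overall strategy---truncate the degrees, build a finite generalized P\'olya urn, invoke Theorem~\ref{thm:basic}, and compute the Perron eigenvector---is the same as the paper's, but your specific encoding destroys the exact correspondence that the argument needs. You place one ball per \emph{vertex} in bin $k$ with activity $a_k=k$ and give the collector bin $K$ the update $\xi_K=e_1$. In that urn, a vertex that has entered the collector keeps activity $K$ forever, however large its true degree becomes, so the total selection weight is $\sum_{k<K}kL_{n,k}+K\,\#\{v:d_v\ge K\}$ rather than the true total degree $\approx 2n$. Hence the urn simulates a ``capped'' preferential attachment process, not the model itself: the probability of choosing a low-degree vertex is systematically inflated, and your key claims that ``the urn's bin $k$ exactly equals $L_{n,k}$'' and that ``no genuine limiting-exchange is needed'' are false. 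What you would actually get from Theorem~\ref{thm:basic} is a limit $\lambda_1^{(K)}(v_1^{(K)})_k$ that depends on $K$ through the Perron eigenvalue of the capped matrix and only tends to $\mu_k$ as $K\to+\infty$; transferring it to the real process would then require an additional coupling/sandwich argument of the type the paper reserves for the countable-fitness case. The paper's proof sidesteps all of this with a different encoding: one ball per \emph{edge endpoint} (so $X_{n,\ell}=\ell L_{n,\ell}$ for $\ell\le k$), all activities equal to $1$, and collector update $\xi_{k+1}=e_1+e_{k+1}$, so the collector bin carries exactly the total degree of the truncated vertices and the truncated urn is an exact copy of preferential attachment for every fixed $k$---precisely the property your version lacks.

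A secondary problem is the eigenvalue computation. In the (untruncated) dynamics the total activity grows by exactly $2$ per step---the chosen vertex's activity increases by one and the new vertex contributes one---so the relevant Perron eigenvalue is $2$, not $1/2$ (the paper gets $\lambda_1=2$ with right eigenvector $(1,\dots,1)$ in its encoding). With your value $\lambda_1=1/2$ the recursion gives $(v_1)_j/(v_1)_{j-1}=(j-1)/(j+1/2)$, which does not reproduce $\mu_j/\mu_{j-1}=(j-1)/(j+2)$, so the ``routine induction'' you defer would not close; with $\lambda_1=2$ it does, yielding $(v_1)_1=1/3$ and $\mu_k=2(v_1)_k$ in your normalization. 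This slip is fixable, but the coupling defect in the truncated urn is the genuine gap.
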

\begin{proof}
Fix $k \geq 1$ and consider the following urn process with $k+1$
urns of equal activities $a_i = 1$, for all $1\leq i\leq k+1$. We
will design the process in such a way that the number of balls in
urn $i$ at time $n$ represents the number of edges in the graph
which are adjacent to vertices of degree $i$---counting twice
edges with both endpoints at vertices of degree $i$.
Except for the $(k+1)$-st urn, where the number of balls will
represent the number of edges adjacent to vertices of degree $\ge
k+1$.

Let $X_0 = (0,2,0,\ldots,0)$ reflecting the fact that initially
there is a single vertex with a self loop (degree $2$). For $2
\leq i \leq k$, let the update vector $\xi_i$ be deterministic
with
\begin{equation*}
\xi_{i,j} = \left\{\begin{array}{ll}
1, & j=1\\
-i, & j=i\\
i+1, & j=i+1\\
0, &\mathrm{o.w.}
\end{array}
\right.
\end{equation*}
reflecting the fact that, if the new vertex being added to the
graph links to an old vertex of degree $i$, then the degree of
that vertex becomes $i+1$, therefore the edges adjacent to that
vertex must be accounted for in the urn $i+1$ instead of the urn
$i$. Finally, for urns $i=1$ and $i=k+1$, the following update
vectors respect the boundary conditions
\begin{align*}
\xi_{1,j} = \left\{\begin{array}{ll}
2, & j=2\\
0, &\mathrm{o.w.}
\end{array}
\right.\text{~~~~~~~~~~and~~~~~~~~~~}\xi_{k+1,j} =
\left\{\begin{array}{ll}
1, & j=1\\
1, & j=k+1\\
0, &\mathrm{o.w.}
\end{array}
\right.
\end{align*}

It is not hard to see that the urn process described above can be
coupled with the preferential attachment process so that with
probability $1$ the following relations are satisfied, for all
$n\ge 0$,
\begin{equation*}
\left\{\begin{array}{ll}
X_{n,\ell} = \ell L_{n,\ell}, &\text{ for }1\leq \ell \leq k\\
X_{n,k+1} = \sum_{\ell\geq k+1} \ell L_{n,\ell} &~
\end{array}
\right.
\end{equation*}
The proof is concluded by computing matrix $A$, its largest
eigenvalue $\lambda_1$ and the corresponding left eigenvector
$v_1$ (see Appendix \ref{sec:app:discrete_finite}). One can check
that Conditions (A1)-(A6) of \cite{Janson:04} are satisfied.
\end{proof}

\subsection{Competition for Links across Fitness Values} \label{sec:among_fitnesses}

We now consider the case $J = |\fcal| > 1$ finite. We aim to
compute the limiting behavior of the random variables $M_{n,j}$,
$1\leq j \leq J$, corresponding to the number of edges with an
endpoint of fitness $f_j$ at time $n$---counting twice edges with
two endpoints of fitness $f_j$, i.e. the total degree of vertices
of fitness $f_j$. Let $\lambda_0 > 0$ be the largest solution to the
equation
\vspace{-0.2cm}\begin{equation}\label{eq:befinite}
\sum_{j=1}^J \frac{f_j q_j}{\lambda_0 - f_j} = 1,
\end{equation}

\vspace{-0.3cm}\noindent where, by monotonicity, $\lambda_0 \in (\max_j{\{f_j\}},+\infty)$.
Also, for $1 \leq j \leq J$, set
\vspace{-0.2cm}\begin{equation}\label{eq:definition of nu}
\nu_{j} = \lambda_0 \frac{q_j}{\lambda_0 - f_j},
\end{equation}

\vspace{-0.3cm}\noindent and verify that
\vspace{-0.2cm}\begin{equation*}
\sum_{j=1}^J \nu_j =
\sum_{j=1}^J (\lambda_0-f_j) \frac{q_j}{\lambda_0 - f_j} +
\sum_{j=1}^J f_j \frac{q_j}{\lambda_0 - f_j} = 2.
\end{equation*}

\vspace{-0.2cm}\noindent We characterize the distribution of links across fitness values in
terms of the $\nu_j$'s.
\begin{proposition}[Fitness Alone]\label{prop:alone}
For all $1 \leq j\leq J$,
\begin{equation*}
\frac{M_{n,j}}{n} \to \nu_j,
\end{equation*}
almost surely as $n \to +\infty$.
\end{proposition}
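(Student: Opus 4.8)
The plan is to mimic the mapping-to-P\'olya-urns strategy of Proposition~\ref{prop:1fit}, but now with the colors tracking \emph{fitness classes} rather than degrees. Specifically, I would set up a generalized P\'olya urn with $J$ bins, one per fitness value $f_j$, where bin $j$ has activity $a_j = f_j$ and where $X_{n,j}$ is coupled to equal $M_{n,j}$, the total degree of vertices of fitness $f_j$ at time $n$. The key observation making the coupling work is that, in the preferential attachment process, a new vertex attaches to a given old vertex with probability proportional to $f_v d_{v,t-1}$; summing over all vertices of fitness $f_j$, the probability that the new edge has its old endpoint in fitness class $j$ is proportional to $f_j M_{n-1,j}$, which is exactly the P\'olya selection rule with activities $a_j = f_j$. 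When bin $j$ is selected, the new vertex (of random fitness $f_k$, $k$ drawn from $\qcal = \{q_k\}$) contributes one new edge: the old endpoint's degree goes up by one (so $M_{\cdot,j}$ increases by $1$) and the new vertex has degree $1$ in class $k$ (so $M_{\cdot,k}$ increases by $1$). Hence the update vector, \emph{conditioned on having picked bin $j$}, is $\xi_j = e_j + e_K$ where $K$ is an independent $\qcal$-distributed index; so $\expec[\xi_{j,\ell}] = \onebf_{\{\ell = j\}} + q_\ell$, and the relevant matrix has entries $A_{j,\ell} = a_j\expec[\xi_{j,\ell}] = f_j(\onebf_{\{\ell=j\}} + q_\ell)$.

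Next I would invoke Theorem~\ref{thm:basic}: conditioned on essential non-extinction (which is trivial here since at least one edge is always created per step and all activities are positive, so the dominating-type condition and tenability are immediate), $X_n/n \to \lambda_1 v_1$ a.s., where $\lambda_1$ is the largest eigenvalue of $A$ and $v_1$ the corresponding positive left eigenvector normalized by $a\cdot v_1 = 1$. The heart of the argument is then the eigenvector computation: I claim $\lambda_1 = \lambda_0$, the largest root of~\eqref{eq:befinite}, and $v_1 = (\nu_j/\lambda_0)_{j=1}^J$ with $\nu_j = \lambda_0 q_j/(\lambda_0 - f_j)$. To verify this, write the left-eigenvector equation $v^\top A = \lambda v^\top$ componentwise: $\sum_j v_j f_j(\onebf_{\{\ell=j\}} + q_\ell) = \lambda v_\ell$, i.e. $v_\ell f_\ell + q_\ell \sum_j v_j f_j = \lambda v_\ell$. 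Setting $S := \sum_j v_j f_j$, this gives $v_\ell = q_\ell S/(\lambda - f_\ell)$; substituting back into the definition of $S$ yields $S = S\sum_\ell f_\ell q_\ell/(\lambda-f_\ell)$, so (for $S\neq 0$) $\lambda$ must satisfy~\eqref{eq:befinite}, forcing $\lambda = \lambda_0$ as the largest such root. The normalization $a\cdot v_1 = \sum_\ell f_\ell v_\ell = S = 1$ then pins down $v_\ell = q_\ell/(\lambda_0 - f_\ell) = \nu_\ell/\lambda_0$, and the already-verified identity $\sum_\ell \nu_\ell = 2$ confirms consistency (since $\lambda_1 v_1$ should have coordinate sum equal to $\lim_n (2n)/n = 2$, as each step adds one edge = two endpoints). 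Therefore $X_n/n \to \lambda_0 v_1 = (\nu_j)_j$, which by the coupling $X_{n,j} = M_{n,j}$ is exactly the claim.

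The main obstacle, as in Proposition~\ref{prop:1fit}, is \emph{not} the probabilistic limit theorem but (i) making the coupling between the urn and the preferential attachment process rigorous and checking it holds with probability $1$ for all $n$, and (ii) verifying that the hypotheses of Theorem~\ref{thm:basic} (conditions (A1)--(A6), or the generalization in Remark~4.2 of~\cite{Janson:04}) genuinely apply — in particular tenability and the irreducibility/dominating-type condition on $A$. Irreducibility is where I would be a little careful: since any bin $j$ with $q_j > 0$ can be reached from any bin (the update always adds a ball to a $\qcal$-random class $K$, and every class has $q_K>0$ by regularity), $A$ is irreducible, and the simple largest eigenvalue with positive eigenvectors follows from Perron--Frobenius applied to $A + \alpha I$ as noted in the text. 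The remaining bookkeeping — that $\lambda_0$ from~\eqref{eq:befinite} is indeed the \emph{largest} root and lies in $(\max_j f_j, +\infty)$, and that the eigenvalue we found is the Perron root rather than a smaller one — follows from the monotonicity of $\lambda\mapsto\sum_j f_j q_j/(\lambda-f_j)$ on $(\max_j f_j,\infty)$ already observed after~\eqref{eq:befinite}, together with positivity of $v_1$.
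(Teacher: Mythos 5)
Your proposal is correct and follows essentially the same route as the paper: the same $J$-bin generalized P\'olya urn with activities $a_j=f_j$, update $\xi_j=e_j+e_K$ with $K\sim\qcal$, coupling $X_{n,j}=M_{n,j}$, and the identical left-eigenvector computation giving $\lambda_1=\lambda_0$ and $(v_1)_j=q_j/(\lambda_0-f_j)=\nu_j/\lambda_0$ before applying Theorem~\ref{thm:basic}. Your extra remarks on tenability, irreducibility, and identifying $\lambda_0$ as the Perron root only make explicit checks the paper leaves implicit.
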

\begin{proof}
We define the following urn process with $J$ urns in which urn $i
\le J$ has activity $a_i = f_i$. The urn process will be designed
so that the number of balls in urn $i$ corresponds to the number
of edges with an endpoint of fitness $f_i$. For $1 \leq i\leq J$,
the update vector $\xi_i$ is given by $\xi_i = e_i + \Delta_i$,
where $\Delta_i = e_j$ with probability $q_j$, for all $1 \leq
j\leq J$. In the context of the preferential attachment process,
this reflects the fact that, if the new vertex links to a bin of
fitness $f_i$, then the number of edges with an endpoint of
fitness $f_i$ increases by one, hence the term $e_i$; moreover,
the new vertex picks a random fitness according to $\qcal$, hence
the term $\Delta_i$. It is easy to couple the defined urn process
with the preferential attachment one so that, with probability
$1$, $X_{n,j} = M_{n,j}$, for all $1\leq j\leq J$ and all $n \geq
0$, provided $X_0 = 2e_i$ with probability $q_i$. The proof is
concluded by computing matrix $A$, its largest eigenvalue
$\lambda_1$ and the corresponding left eigenvector $v_1$ (see
Appendix \ref{sec:app:discrete_finite}).
\end{proof}

\subsection{Finite Distributions: Fit-Get-Richer}\label{sec:full}

{
In this section, we derive the degree distribution of preferential attachment with
fitness under finite fitness distributions.}
For all $1\leq j\leq J$ and $k \geq 1$, denote by $N_{n,(j,k)}$
the number of vertices of fitness $f_j$ and degree $k$ at time
$n$. Define $\lambda_0$ and $\{\nu_j\}_{j=1}^J$ as in Section
\ref{sec:among_fitnesses}. 
Moreover, for all $1\leq j\leq J$ and $k \geq 1$, set
$\eta_{(j,k)}$ as follows
\begin{equation} \label{eq:basic_fit_get_rich}
\eta_{(j,k)} = \nu_j \cdot
\frac{1}{k}\prod_{\ell=2}^{k}\frac{\ell}{\ell + \lambda_0
f_{j}^{-1}}.
\end{equation}
In particular,
\begin{equation*}
\frac{\eta_{(j,k+1)}}{\eta_{(j,k)}} = \frac{k}{k+1}\frac{k+1}{k+1 + \lambda_0 f_{j}^{-1}}
= 1 - \frac{1+ \lambda_0 f_{i}^{-1}}{k}(1+o(1)),
\end{equation*}
as $k$ gets large. Thus, for fixed $j$, $\{\eta_{(j,k)}\}_{k\geq
1}$ has tail exponent $\lambda_0 f_j^{-1}$.
\begin{proposition}[Finite Fitness Distributions: Fit-Get-Richer]\label{prop:finitetype}
For all $1\leq j\leq J$ and $k \geq 1$, we have
\begin{equation*}
\frac{N_{n,(j,k)}}{n} \to \eta_{(j,k)},
\end{equation*}
almost surely as $n \to +\infty$.
\end{proposition}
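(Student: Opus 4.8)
The plan is to extend the P\'olya urn coupling used in Proposition~\ref{prop:1fit} by tracking pairs $(j,k)$ of (fitness value, degree) simultaneously, truncating the degree at some level $K$, and then letting $K \to +\infty$. Concretely, for a fixed truncation parameter $K$, I would set up an urn process whose bins are indexed by pairs $(j,k)$ with $1\le j\le J$ and $1\le k\le K$, together with $J$ extra ``overflow'' bins $(j,\ge\! K\!+\!1)$; the activity of bin $(j,k)$ is $f_j$ (so that balls in all bins of fitness $f_j$ are picked proportionally to $f_j$ times the total degree of vertices of fitness $f_j$, matching the attachment rule). The number of balls in bin $(j,k)$ is coupled to equal $k\,N_{n,(j,k)}$, i.e.\ the total degree contributed by vertices of fitness $f_j$ and degree exactly $k$. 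The update vector when bin $(j,k)$ is picked is deterministic in its ``degree-shift'' part --- it moves $k$ balls out of bin $(j,k)$ and puts $k+1$ into bin $(j,k+1)$ and adds one ball to the overall count --- plus a random part $\Delta$ (adding $2e_{(j',1)}$ with probability $q_{j'}$) reflecting the freshly arrived vertex of random fitness $f_{j'}$ and degree $1$. This is the natural two-coordinate analogue of the two constructions already carried out, and the coupling is checked by the same elementary induction on $n$.

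Next I would apply Theorem~\ref{thm:basic} to this finite urn. The matrix $A^{(K)}$ is block-structured: the ``degree dynamics'' within a fixed fitness $f_j$ is essentially the same triangular-plus-shift structure as in the one-fitness case, but rescaled, and the ``fitness injection'' couples blocks only through the degree-$1$ bins. I would verify tenability and the generalized irreducibility condition exactly as in Section~\ref{sec:polya} (the degree-$1$ bins form a dominating type reachable from everything, the overflow bins are reachable, second moments are finite since updates are bounded by $K+2$), so that $X_n/n \to \lambda_1^{(K)} v_1^{(K)}$ a.s. The key computation is to identify the largest eigenvalue and left eigenvector. I expect $\lambda_1^{(K)} = 1$ for every $K$: informally the urn grows by exactly one ball in the total count per step (one new edge, contributing degree $2$ split as the endpoint's increment plus the new vertex's degree, and the total ball count increases by exactly one), which should force the Perron eigenvalue to be $1$ --- this is the same reason $\lambda_1=1$ showed up implicitly in Propositions~\ref{prop:1fit} and~\ref{prop:alone}. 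With $\lambda_1^{(K)}=1$ fixed, solving $v^\top A^{(K)} = v^\top$ for the left eigenvector reduces, within each fitness block, to the recursion $v_{(j,k)}(k+1+\lambda_0 f_j^{-1}) \cdot (\text{normalization}) = \ldots$ that telescopes into exactly the product $\frac1k\prod_{\ell=2}^k \frac{\ell}{\ell+\lambda_0 f_j^{-1}}$, with the block constant pinned down by the degree-$1$ equation. That degree-$1$ equation is where $\nu_j$ enters: the influx into bin $(j,1)$ per unit time is $2q_j$ (the new vertex has fitness $f_j$ with probability $q_j$ and degree $1$ contributing $2\cdot$\nobreak(one ball of weight counting both endpoints)\nobreak; more carefully it is the probability the new vertex has fitness $f_j$), while the outflow rate of bin $(j,1)$ is governed by the total activity-weighted mass of fitness $f_j$, which by Proposition~\ref{prop:alone} is $\nu_j$; balancing these yields the prefactor $\nu_j$ in $\eta_{(j,k)}$. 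I would also sanity-check that $\sum_{k}\eta_{(j,k)}$ and $\sum_k k\,\eta_{(j,k)}$ come out to the right marginals ($q_j$ and $\nu_j$ respectively), which both confirms the eigenvector computation and will be needed for the truncation argument.

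Finally, the truncation $K\to+\infty$ must be removed. Here I would argue monotonically: for each fixed $k$, $N_{n,(j,k)}$ in the true process equals the corresponding count in the $K$-truncated process for every $K\ge k$ (the truncation only merges high-degree vertices and never affects the dynamics at degrees $\le K$, since attachment probabilities depend on $f_v d_v$ which is faithfully represented bin-by-bin). Hence the limit $\eta_{(j,k)}$ obtained from the $K$-urn is already the right answer for every $K\ge k$ --- there is in fact nothing to take a limit of, provided one checks the $K$-truncated eigenvector entry at $(j,k)$ does not depend on $K$ (it won't, because the recursion that produces $\eta_{(j,k)}$ only involves bins $(j,1),\dots,(j,k)$ and the block constant $\nu_j$, none of which see $K$). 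The one genuine subtlety is the overflow bin and normalization $a\cdot v_1 = 1$: the activity-weighted total is $\sum_{j,k} f_j k\,\eta_{(j,k)} = \sum_j f_j\nu_j$, which one checks equals $\sum_j f_j q_j \lambda_0/(\lambda_0-f_j)$; using $\sum_j f_j q_j/(\lambda_0-f_j)=1$ this is finite and positive, so the normalization is harmless and $K$-independent.

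\textbf{Main obstacle.} The crux is the eigenvector/eigenvalue computation for $A^{(K)}$ --- in particular cleanly establishing $\lambda_1^{(K)}=1$ and then solving the block-coupled linear system to recover the exact product formula \eqref{eq:basic_fit_get_rich} with the precise constant $\nu_j$. The block coupling through the degree-$1$ bins means this is not quite a product of the two earlier computations, and getting the normalization constant $\nu_j$ (rather than, say, $q_j$ or $\nu_j/2$) right requires carefully accounting for the factor-of-two conventions in ``counting twice edges with both endpoints at a given fitness.'' I expect this to be the bulk of the work in Appendix~\ref{sec:app:discrete_finite}; the coupling and the removal of the truncation are routine by comparison.
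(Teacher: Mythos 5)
Your overall architecture --- bins indexed by (fitness, degree) with activities $f_j$, ball counts coupled to $k\,N_{n,(j,k)}$, an overflow bin per fitness, a coupling checked by induction on $n$, and then an appeal to Theorem~\ref{thm:basic} --- is exactly the paper's, and your observation that the degree truncation is exact (so no $K\to+\infty$ limit is needed; the paper simply takes the cutoff $r=k+1$ for the fixed $k$) is also correct. The genuine gap is in the step you yourself single out as the crux: the Perron eigenvalue. Your claim $\lambda_1^{(K)}=1$ is false, and the heuristic behind it (``the eigenvalue equals the per-step growth of the ball count, as implicitly in Propositions~\ref{prop:1fit} and~\ref{prop:alone}'') misreads both earlier computations: in Proposition~\ref{prop:1fit} the eigenvalue is $2$, and in Proposition~\ref{prop:alone} it is $\lambda_0$, the root of \eqref{eq:befinite}. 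For a generalized urn with non-unit activities the relevant growth rate is that of the \emph{activity-weighted} ball count, not of the ball count itself; the paper identifies $\lambda_1=\lambda_0$ by exhibiting the right eigenvector $(u_1)_{(i,l)}=f_i/(\lambda_0-f_i)$ and verifying $Au_1=\lambda_0 u_1$ directly from \eqref{eq:befinite}. Your proposal is internally inconsistent on this point: with $\lambda_1=1$ the left-eigenvector recursion you would solve, $f_i\bigl(l\,v_{(i,l-1)}-l\,v_{(i,l)}\bigr)=\lambda_1 v_{(i,l)}$, yields the ratio $l/(l+f_i^{-1})$, i.e.\ tail exponent $f_i^{-1}$, not the ratio $l/(l+\lambda_0 f_i^{-1})$ required by \eqref{eq:basic_fit_get_rich}. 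So, as written, the central computation would not produce the claimed limit; the eigenvalue identification has to come first, and it is precisely the nontrivial content of the appendix proof.

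Two further bookkeeping points. First, with your convention $X_{n,(j,k)}=k\,N_{n,(j,k)}$ the arrival of the new vertex must add \emph{one} ball to bin $(j',1)$ (it has degree one), not $2e_{(j',1)}$; adding two balls breaks the coupling identity and inflates the per-step growth, feeding the same factor-of-two confusion you flag later. Second, the constant multiplying the product in the degree distribution is not obtained from a naive ``influx $2q_j$ versus outflow $\nu_j$'' balance: in the paper one sums the left-eigenvector column equations over the degrees within a fitness block, which reproduces the system of Proposition~\ref{prop:alone} and gives $\sum_l (v_1)_{(j,l)}=q_j/(\lambda_0-f_j)$, and then solves the degree-one column equation under the normalization $a\cdot v_1=1$, which gives $(v_1)_{(j,1)}=q_j/(\lambda_0+f_j)$; the limit of $N_{n,(j,k)}/n$ is then read off as $\lambda_0 (v_1)_{(j,k)}/k$. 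Your instinct to sanity-check the marginals $\sum_k\eta_{(j,k)}$ and $\sum_k k\,\eta_{(j,k)}$ is a good one --- it is exactly the check that pins down these boundary constants and the edge-counting conventions --- but it must be run against the eigenvector computed with $\lambda_1=\lambda_0$, not with $\lambda_1=1$.
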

{Observe that the tail exponent is a decreasing function of the
fitness. Hence, the tail of the distribution gets fatter as the
fitness increases. This is the ``signature'' of the
fit-get-richer phase. The proof of Proposition~\ref{prop:finitetype} is
postponed to the appendix. It follows from a
combination of the couplings in Propositions \ref{prop:1fit} and
\ref{prop:alone}, by defining a P\'olya urn process with a bin for every
pair of fitness and degree. Once again, the degree is truncated at a 
maximum value and an extra bin accounts for all degrees above.}

\vspace{-0.3cm}\section{Preferential Attachment: Countable Distributions} \label{sec:countable}

{
If $J=+\infty$, which we sometimes call the infinite-type case,
the coupling described in the previous section cannot be used
directly, since it would then require an infinite number of urns
(for the fitnesses alone) and Theorem~\ref{thm:basic} is not
known to hold generally in the infinite case.
Nevertheless, we obtain similar results 
by coupling our process this time with {\em two} finite-type preferential
attachment processes which
provide lower and upper bounds on the degree distribution of our
process. The coupling is presented in Section
\ref{sec:coupling_discrete}. Using this coupling and
Proposition~\ref{prop:finitetype}, we exhibit the
following evolution scenarios for the preferential attachment
process with countable fitness distribution:
\begin{itemize}
\item the {\em fit-get-richer scenario}, taking place
when $\sum_{j=1}^{+\infty} \frac{f_j q_j}{h - f_j} \ge 1,$

\item the {\em innovation-pays-off scenario}, taking place
when $\sum_{j=1}^{+\infty} \frac{f_j q_j}{h - f_j} < 1,$
\end{itemize}
where $h=\sup_{j\geq 1}{\{f_j\}}$. 

For convenience, we treat only the case $\{f_j\}_{j\geq 1}$ 
increasing. The general case---which is omitted
from this extended abstract---follows from an analysis similar
to that for continuous fitness distributions in Appendix~\ref{sec:app:continuous}.
}

\subsection{Coupling} \label{sec:coupling_discrete}

Denoting by $h$ the supremum of $\{f_j\}_{j\geq 1}$, let us assume
that $h < +\infty$; the case $h=+\infty$ is treated in
Section~\ref{sec:unbounded} of the appendix. Setting $I$ to be a
positive integer, the {\em upper I-truncation} of $\fcal$, denoted
$\ovfcal = \{\ovf_j\}_{j\geq 1}$, and the {\em lower I-truncation}
of $\fcal$, denoted $\undfcal = \{\undf_j\}_{j\geq 1}$, are
defined by
\begin{equation*}
\ovf_j =
\left\{\begin{array}{ll}
f_j, & j\leq I\\
0, & \mathrm{o.w.}
\end{array}
\right. ~~~~~~~~~~ \undf_j = \left\{\begin{array}{ll}
f_j, & j\leq I\\
h, & \mathrm{o.w.}
\end{array}
\right.
\end{equation*}
We shall couple the $(\fcal,\qcal)$ chain with the chains
$(\ovfcal,\qcal)$, $(\undfcal,\qcal)$ defined by the upper and
lower truncations to provide upper and lower bounds respectively
on the degrees of chain $(\fcal,\qcal)$\footnote{Strictly speaking,
we think of $\qcal$ here as a distribution on the
{\em indices} of the fitness sequences $\fcal$, $\undfcal$, and
$\ovfcal$ rather than on the fitnesses themselves.}. Roughly speaking, the
chains can be coupled so that, at every step, the probability of
choosing an old vertex of fitness value $f_1$ up to $f_J$ is
larger in the $(\ovfcal,\qcal)$ than in the $(\fcal,\qcal)$ chain
and larger in the $(\fcal,\qcal)$ than in the $(\undfcal,\qcal)$
chain. This property certainly holds in the beginning of the
processes and then reproduces itself since it makes the cumulative
degree of fitness levels $f_1$ up to $f_J$ grow faster in the
$(\ovfcal,\qcal)$ than in the $(\fcal,\qcal)$ chain and faster in
the $(\fcal,\qcal)$ than in the $(\undfcal,\qcal)$ chain. It is
important to note however that the degree by itself is not
sufficient to guarantee the domination of probabilities for the
next step of the process; rather we couple the edges which get
added at each step in such a way that the fitness values of the
endpoints in chain $(\undfcal,\qcal)$ dominate the fitness values
in $(\fcal,\qcal)$ and those dominate the fitness values in chain
$(\ovfcal,\qcal)$. 

\vspace{-0.5cm}\paragraph{Fitness Alone.} We first bound $M_{n,j}$, defined as in Section \ref{sec:among_fitnesses} to be the number of edges with an endpoint of
fitness $f_j$ (counting twice edges with two endpoints of fitness
$f_j$). Fixing $1 < I < +\infty$, let $\ovmm_{n,j}$ and
$\undmm_{n,j}$ be the corresponding variables of the
$(\ovfcal,\qcal)$, $(\undfcal,\qcal)$ chains. It is clear that the
latter are equivalent to finite type urn processes, so that
Proposition~\ref{prop:alone} applies. Let $\ovnu_j$ and
$\undnu_j$ be the (almost sure) limits of $n^{-1}\ovmm_{n,j}$ and
$n^{-1}\undmm_{n,j}$. Then we have the following.
\begin{lemma}[Coupling: Fitness Alone]\label{lem:couplingalone}
For all $1 \leq j \leq I$, it holds almost surely that
\begin{equation*}
\limsup_{n \to +\infty} \frac{M_{n,j}}{n} \leq
\ovnu_j,\text{~~~~~~and~~~~~~} \liminf_{n \to +\infty}
\frac{M_{n,j}}{n} \geq \undnu_j.
\end{equation*}
\end{lemma}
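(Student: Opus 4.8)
The plan is to construct an explicit coupling of the three chains $(\fcal,\qcal)$, $(\ovfcal,\qcal)$ and $(\undfcal,\qcal)$ on a common probability space, realized step by step, under which a suitable monotonicity relation on cumulative degrees is preserved for all $n$ with probability one. First I would make precise the dominance that drives everything: for each $n$, writing $D_{n,j}$, $\ovdd_{n,j}$, $\unddd_{n,j}$ for the cumulative degree of the vertices with fitness \emph{index} $j$ in the three chains, I claim the coupling can be arranged so that for every $1\le j\le I$ and every $n$,
\begin{equation*}
\unddd_{n,j} \le D_{n,j} \le \ovdd_{n,j}.
\end{equation*}
The key observation making this possible is that in the $(\ovfcal,\qcal)$ chain the fitnesses of indices $>I$ are set to $0$, so those vertices never attract a new edge; in the $(\undfcal,\qcal)$ chain they are set to $h$, the largest possible value, so they attract edges as aggressively as possible. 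Hence, relative to the true chain, the upper-truncated chain ``starves'' the high-index vertices and the lower-truncated chain ``over-feeds'' them, which should push more (resp.\ less) attachment mass onto the low indices $1,\dots,I$.

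The heart of the argument is to build the step-$n$ coupling so that the above inequalities propagate. I would do this by coupling both the choice of old vertex and the fitness of the new vertex. Conditioned on the inequalities holding at time $n-1$, the probability of attaching to \emph{some} vertex of index $\le I$ is $\sum_{j\le I} f_j \ovdd_{n-1,j} \big/ Z^{\ovrr}_{n-1}$ in the upper chain and the analogous ratio in the true chain, and one checks — using $\ovdd_{n-1,j}\ge D_{n-1,j}$ for $j\le I$ together with the fact that the total weight $Z_{n-1}$ has a clean form ($2(n-1)$ times a weighted average of fitnesses, or more simply that each chain has exactly $2(n-1)$ edge-endpoints) — that this probability is larger in the upper chain than in the true chain, and larger in the true chain than in the lower chain. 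One then couples the three indicator variables ``new edge goes to index $\le I$'' monotonically, and within that event couples the specific index and the new vertex's fitness index so that the per-index degree increments respect $\unddd \le D \le \ovdd$ coordinatewise for $j\le I$. The new vertex's own contribution to its index's degree is the same ($+1$) in all three chains since the fitness \emph{indices} are coupled to be equal, so it does not disturb the inequalities. This is the step I expect to be the main obstacle: one has to be careful that ``dominating the probability of hitting the block $\{1,\dots,I\}$'' can be refined to a coupling that dominates \emph{each} $D_{n,j}$ individually and not merely the sum $\sum_{j\le I} D_{n,j}$, which requires an inductive choice of which index inside the block receives the edge (e.g.\ a greedy/rearrangement coupling respecting the partial order), and one must verify the monotone update does not break at the boundary index $I$ or when the true chain sends an edge to a high index that the upper chain cannot.

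Once the coupling is in place, $M_{n,j} = D_{n,j}$, $\ovmm_{n,j}=\ovdd_{n,j}$, $\undmm_{n,j}=\unddd_{n,j}$ by definition, so $\unddd_{n,j}\le M_{n,j}\le \ovmm_{n,j}$ for all $j\le I$ and all $n$ almost surely. The $(\ovfcal,\qcal)$ and $(\undfcal,\qcal)$ chains are genuine finite-type chains (only finitely many positive-activity indices; the zero-activity indices in the upper chain are inert and can be folded into a single dummy type), so Proposition~\ref{prop:alone} applies and gives $n^{-1}\ovmm_{n,j}\to\ovnu_j$ and $n^{-1}\undmm_{n,j}\to\undnu_j$ almost surely. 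Dividing the sandwich inequality by $n$ and taking $\limsup$ and $\liminf$ yields
\begin{equation*}
\limsup_{n\to+\infty}\frac{M_{n,j}}{n}\le \ovnu_j,
\qquad
\liminf_{n\to+\infty}\frac{M_{n,j}}{n}\ge \undnu_j,
\end{equation*}
for all $1\le j\le I$, which is exactly the claim. $\blacksquare$
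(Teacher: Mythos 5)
Your overall strategy is the paper's: sandwich the $(\fcal,\qcal)$-chain between the two truncated finite-type chains by a step-by-step coupling that maintains the coordinatewise invariant $\undmm_{n,i}\le M_{n,i}\le \ovmm_{n,i}$ for $i\le I$, and then invoke Proposition~\ref{prop:alone} for the truncations. The problem is that the one step you yourself flag as ``the main obstacle'' is exactly the content of the lemma, and your proposal does not actually supply it. Comparing the probabilities of hitting the \emph{block} $\{1,\dots,I\}$ (which for the upper chain is trivially $1$, since indices $>I$ have activity $0$) is not the right ingredient: once both chains attach inside the block, their conditional laws on $\{1,\dots,I\}$ are both probability measures, so no coupling can make ``true picks $i$'' imply ``upper picks $i$'' for every $i$ \emph{after} conditioning, and a generic ``greedy/rearrangement coupling respecting the partial order'' within the block would only give you stochastic domination of the chosen index, which does not preserve the per-index counts $M_{n,i}\le\ovmm_{n,i}$. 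What is actually needed, and what the paper proves and uses, is the \emph{unconditional} index-by-index domination of attachment probabilities, $\undrho_{n,i}\le\rho_{n,i}\le\ovrho_{n,i}$ for every $i\le I$ (the paper's Condition 3). Given this, the coupling is explicit: with probability $\sum_{i\le I}\undrho_{n-1,i}$ all three chains attach at the same index; with probability $\sum_{i\le I}(\rho_{n-1,i}-\undrho_{n-1,i})$ the true and upper chains attach at the same index $\le I$ while the lower chain attaches to a fitness-$h$ vertex; with the remaining mass $\sum_{i\le I}(\ovrho_{n-1,i}-\rho_{n-1,i})$ the upper chain attaches inside the block while the true chain attaches at an index $>I$ and the lower chain at fitness $h$; there is no leftover because $\sum_{i\le I}\ovrho_{n-1,i}=1$.

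There is a second, related gap: even the probability comparisons require controlling the \emph{normalizing total weights} of the three chains, and this does not follow from the block-level bookkeeping you invoke. Your appeal to ``each chain has exactly $2(n-1)$ edge-endpoints'' is not by itself enough to order the denominators when comparing the true chain with the upper truncation (the upper chain has \emph{more} weight inside the block, so the naive comparison goes the wrong way). The paper resolves this by carrying an additional invariant in the induction, its Condition 1: at every step the fitness values realized in the three chains are coupled to be pointwise ordered, $\ovff_n\le F_n\le\undff_n$ and $\ovff_n'\le F_n'\le\undff_n'$, which makes the three total attachment weights ordered endpoint by endpoint and hence yields Condition 3 from Conditions 1 and 2. (Alternatively, one can recover the needed per-index inequality from Condition 2 together with the facts that the fitness sequence is increasing, indices $>I$ carry fitness $0$ in $\ovfcal$ and $h$ in $\undfcal$, and all chains have $2n$ endpoints; but some such argument must be made explicitly.) Once the per-index probability domination and the explicit residual-mass coupling above are in place, your concluding step --- divide the sandwich by $n$ and apply Proposition~\ref{prop:alone} to the two finite-type chains --- is exactly the paper's, and is fine.
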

\begin{proof}
Consider the $(\fcal, \qcal)$-chain. At step $n \geq 1$, a vertex
is picked with probability proportional to its degree scaled by
its fitness. Let $F_n$ be the fitness of the chosen vertex and
denote by $\rho_{n-1,i}$ the probability that $F_n = f_i$ given
the state of the chain after step $n-1$. After a vertex is picked,
a new vertex is added with fitness chosen according to $\qcal$.
Let $F_n'$ be the fitness of this new vertex. Denote by $\ovff_n,
\ovff_n', \ovrho_n, \undff_n, \undff_n', \undrho_n$ the
corresponding variables for the chains $(\ovfcal,\qcal)$ and
$(\undfcal,\qcal)$ respectively. We define a coupling of the three
chains so as to preserve the following conditions:
\begin{enumerate}
\item For all $n \geq 1$, $\ovff_n \leq
F_n \leq \undff_n$ and $\ovff_n' \leq F_n' \leq \undff_n'.$

\item For all $n\geq 1$ and all $1\leq i\leq
I$, $\undmm_{n,i} \leq M_{n,i} \leq \ovmm_{n,i}.$

\item For all $n\geq 1$ and all $1\leq i\leq
I$, $\undrho_{n,i} \leq \rho_{n,i} \leq
\ovrho_{n,i}.$
\end{enumerate}
Note that 3. follows immediately from 1.~and 2. We now justify why
the conditions are satisfied for all $n\ge 0$. The initial
configuration ($n=0$) is constructed by picking an $i$ according
to $\qcal$ and choosing the corresponding fitness in all three
chains. Therefore the conditions are satisfied at time $0$ by the
definition of $\ovfcal$ and $\undfcal$. Assuming that Conditions
1., 2., and 3. are satisfied at time $n-1$ we will show that they
are true at time $n$. Indeed, since the fitness of the new vertex
is picked according to $\qcal$ in all $3$ chains it follows from
the definition of $\ovfcal$ and $\undfcal$ that $\ovff_n' \leq
F_n' \leq \undff_n'$. Now let us consider the step of picking the
old vertex. By 3., it follows that the choices made in the three
chains can be coupled so as to satisfy Conditions 1.~and 2.
Indeed, proceed as follows:
\begin{itemize}
\item with probability $\sum_{i=1}^I \undrho_{n-1,i}$, pick the
same fitness in all three chains according to
$\{(\undrho_{n-1,i})\}_{i=1}^I$;

\item with probability $\sum_{i=1}^I
(\rho_{n-1,i}-\undrho_{n-1,i})$, pick the same fitness in chains
$(\fcal,\qcal)$ and $(\ovfcal,\qcal)$ according to
$\{(\rho_{n-1,i} - \undrho_{n-1,i})\}_{i=1}^I$ and some fitness
$h$ for $(\undfcal,\qcal)$;

\item with probability $\sum_{i=1}^I
(\ovrho_{n-1,i}-\rho_{n-1,i})$, pick a fitness for the
$(\ovfcal,\qcal)$-chain according to $\{(\ovrho_{n-1,i} -
\rho_{n-1,i})\}_{i=1}^I$, pick some fitness $h$ for
$(\undfcal,\qcal)$, and pick a fitness for $(\fcal,\qcal)$
according to $\{(f_j M_{n,j})\}_{j> I}$;

\item note that there is no remaining probability mass since
$\sum_{i=1}^I \ovrho_{n-1,i}=1$.
\end{itemize}
This concludes the proof. It should be clear that the described coupling is
valid.
\end{proof}

\vspace{-0.5cm}\paragraph{Full Analysis.} Using our coupling idea we can also derive bounds on $N_{n,(j,k)}$,
defined as in Section~\ref{sec:full} to be the number of vertices
of fitness $f_j$ and degree $k$ at time $n$ in the
$(\fcal,\qcal)$-chain, in terms of the corresponding variables of
the $(\ovfcal,\qcal)$-chain and $(\undfcal,\qcal)$-chain. The coupling
has a similar flavor and its details are postponed to Section
\ref{sec:app:coupling_discrete} of the appendix.

%

\subsection{Fit-Get-Richer Phase}

Let $h = \sup_{j\geq 1} f_j < +\infty$, the case $h=+\infty$ being
treated in Section \ref{sec:unbounded}. Unlike the finite-type
case, when $J=+\infty$, we are not guaranteed that there exists a
solution of
\begin{equation}\label{eq:be1}
\sum_{j=1}^{J} \frac{f_j q_j}{\lambda - f_j} = 1,
\end{equation}
with $\lambda > h$. Observe, however, that in our proof of
Proposition~\ref{prop:alone} this was necessary for the existence
of a (summable) Perron-Frobenius eigenvector (see the expression
for $v_1$ in the proof of Proposition~\ref{prop:alone}). We will
actually show that the behavior of the process depends crucially
on the existence of such a solution. In this section, we consider
the case
\begin{equation}\label{eq:be2}
\sum_{j=1}^{J} \frac{f_j q_j}{h - f_j} > 1.
\end{equation}
We generalize Proposition \ref{prop:finitetype} exhibiting a
fit-get-richer behavior in this case. The following theorem
summarizes our result.
\begin{theorem}[Discrete Case: Fit-Get-Richer Phase]\label{thm:fitgetrich}
Let $1 \leq J \leq +\infty$, $h = \sup_{j\geq 1} f_j < +\infty$.
\begin{equation*}\label{eq:fitgetrich}
\text{Assume~~~~~~~~~~~~~~~~~~~~~~~~~~~~~~~~~~~~~~~~~~~~~}\sum_{j=1}^{J}
\frac{f_j q_j}{h - f_j}
>
1.~~~~~~~~~~~~~~~~~~~~~~~~~~~~~~~~~~~~~~~~~~~~~~~~~~~~~~~~~~
\end{equation*}
Then it holds that
\begin{enumerate}
\item for all $1 \leq j < J+1$, 
$\frac{M_{n,j}}{n} \to \nu_j,$ almost surely as $n \to +\infty$,

\item for all $1\leq j < J+1$ 
and $k \geq 1$, $\frac{N_{n,(j,k)}}{n} \to \eta_{(j,k)},$ almost
surely as $n \to +\infty$,
\end{enumerate}
where $\{\nu_j\}_j$ and $\{\eta_{(j,k)}\}_{j,k}$ are defined by
Equations~\eqref{eq:definition of
nu},~\eqref{eq:basic_fit_get_rich}.
\end{theorem}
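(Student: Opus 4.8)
The plan is to combine the coupling from Section~\ref{sec:coupling_discrete} with the finite-type results of Section~\ref{sec:finitetype}, taking the limit $I \to +\infty$ of the truncations. The role of hypothesis \eqref{eq:be2} is to guarantee that the Bose-Einstein equation~\eqref{eq:be1} genuinely has a solution $\lambda_0 > h$; then both the upper and lower truncated chains have limiting parameters converging to those of the untruncated process. Concretely, for each truncation level $I$, let $\ovlambda_0^{(I)}$ and $\undlambda_0^{(I)}$ denote the solutions of the finite-type Bose-Einstein equation for the chains $(\ovfcal,\qcal)$ and $(\undfcal,\qcal)$, with corresponding $\ovnu_j^{(I)}, \undnu_j^{(I)}$ and $\oveta_{(j,k)}^{(I)}, \undeta_{(j,k)}^{(I)}$ as in~\eqref{eq:definition of nu} and~\eqref{eq:basic_fit_get_rich}. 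By Proposition~\ref{prop:alone} and Proposition~\ref{prop:finitetype} applied to each truncated chain, $n^{-1}\ovmm_{n,j}\to\ovnu_j^{(I)}$, $n^{-1}\undmm_{n,j}\to\undnu_j^{(I)}$, and similarly for the $N$-variables, almost surely.

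The first main step is an analytic lemma on the truncated Bose-Einstein solutions: as $I\to+\infty$, both $\ovlambda_0^{(I)}$ and $\undlambda_0^{(I)}$ converge to $\lambda_0$, the solution of~\eqref{eq:be1}. For the lower truncation this is essentially monotone convergence — the function $\lambda\mapsto\sum_{j\le I} f_j q_j/(\lambda-f_j) + (\sum_{j>I}q_j)\,h/(\lambda-h)$ increases pointwise to $\ical(\lambda)-1+1$ and crosses $1$ at $\undlambda_0^{(I)}\downarrow\lambda_0$; one must check the limit of the crossing point is exactly $\lambda_0$ and not $h$, which is where~\eqref{eq:be2} is used (it ensures $\ical(h)>1$, so the limiting crossing point stays strictly above $h$). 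For the upper truncation, $\sum_{j\le I} f_j q_j/(\lambda-f_j)\uparrow\ical(\lambda)$ on $(h,+\infty)$, so $\ovlambda_0^{(I)}\downarrow\lambda_0$ as well. Continuity of~\eqref{eq:definition of nu} and~\eqref{eq:basic_fit_get_rich} in $\lambda_0$ then gives $\ovnu_j^{(I)}\to\nu_j$, $\undnu_j^{(I)}\to\nu_j$, and $\oveta_{(j,k)}^{(I)}\to\eta_{(j,k)}$, $\undeta_{(j,k)}^{(I)}\to\eta_{(j,k)}$ for each fixed $j,k$.

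The second step assembles the squeeze. By Lemma~\ref{lem:couplingalone}, for every fixed $I$ and every $j\le I$,
\begin{equation*}
\undnu_j^{(I)} \;\le\; \liminf_{n\to+\infty}\frac{M_{n,j}}{n} \;\le\; \limsup_{n\to+\infty}\frac{M_{n,j}}{n} \;\le\; \ovnu_j^{(I)},
\end{equation*}
almost surely, and the analogous sandwich holds for $N_{n,(j,k)}/n$ via the coupling deferred to Section~\ref{sec:app:coupling_discrete}. Fixing $j$ (resp.\ $j,k$) and letting $I\to+\infty$ through integers with $I\ge j$, the outer bounds both converge to $\nu_j$ (resp.\ $\eta_{(j,k)}$) by step one, so the $\liminf$ and $\limsup$ coincide and equal the claimed value. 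Since this holds for each $j$ and each pair $(j,k)$ on an event of probability one, and countable intersections of probability-one events have probability one, the conclusion follows for all $j<J+1$ and $k\ge1$ simultaneously.

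The main obstacle is the analytic lemma of step one — specifically verifying that the limiting crossing point of the lower-truncated Bose-Einstein equation is $\lambda_0$ rather than $h$. Here one must be careful about the contribution of the tail mass $\sum_{j>I}q_j$, which is placed at fitness $h$ in the lower truncation and therefore produces a term $h\bigl(\sum_{j>I}q_j\bigr)/(\lambda-h)$ that blows up as $\lambda\downarrow h$; this forces $\undlambda_0^{(I)}>h$ for every finite $I$, and one needs a quantitative estimate (using that $\ical(h)>1$ strictly, per~\eqref{eq:be2}) to show $\liminf_I\undlambda_0^{(I)}>h$, after which the pointwise convergence of the left-hand side pins the limit at $\lambda_0$. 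A secondary technical point is justifying that the coupling of Lemma~\ref{lem:couplingalone} (and its $N$-variable analogue) and the finite-urn limit theorem both apply to the truncated chains for every $I$; this is routine since each truncated chain is literally a finite-type $(\fcal',\qcal)$-chain to which Propositions~\ref{prop:alone} and~\ref{prop:finitetype} apply verbatim, and the tenability and irreducibility hypotheses of Theorem~\ref{thm:basic} were already checked in those propositions.
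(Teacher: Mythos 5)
Your proposal follows essentially the same route as the paper's proof: truncate at level $I$, apply the finite-type results (Propositions~\ref{prop:alone} and~\ref{prop:finitetype}) to the chains $(\ovfcal,\qcal)$ and $(\undfcal,\qcal)$, sandwich via Lemmas~\ref{lem:couplingalone} and~\ref{lem:couplingfull}, and reduce everything to the analytic fact that $\undlambda^I_0,\ovlambda^I_0\to\lambda_0$, which the paper also proves by pointwise convergence of the truncated sums together with monotonicity and continuity of $S(\lambda)=\sum_j f_jq_j/(\lambda-f_j)$ on $(h,\infty)$. Two points in your sketch of the analytic lemma are stated backwards, though the correct facts are recoverable from what you wrote. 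First, the lower truncation moves the tail mass \emph{up} to $h$, so $\undss^I(\lambda)\geq S(\lambda)$ and hence $\undlambda^I_0\geq\lambda_0>h$ automatically; there is no danger of that crossing point collapsing to $h$, and its convergence $\undlambda^I_0\downarrow\lambda_0$ only needs $\undss^I(\lambda_0+\eps)\to S(\lambda_0+\eps)<1$. The delicate side is the \emph{upper} truncation, where $\ovss^I(\lambda)\uparrow S(\lambda)$ and consequently $\ovlambda^I_0\uparrow\lambda_0$ (not $\downarrow$); here is exactly where the strict inequality \eqref{eq:be2} is used, since it guarantees $\lambda_0>h$ and hence an $\eps>0$ with $\lambda_0-\eps>h$ and $S(\lambda_0-\eps)>1$, which via monotone convergence forces $\ovlambda^I_0\geq\lambda_0-\eps$ eventually. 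Second, the deferred coupling (Lemma~\ref{lem:couplingfull}) sandwiches the \emph{tails} $T_{n,(j,k)}=\sum_{k'\geq k}k'N_{n,(j,k')}$, not $N_{n,(j,k)}$ itself (exact degree counts are not monotone under the coupling); so part 2 must be obtained, as in the paper, by writing $kN_{n,(j,k)}=T_{n,(j,k)}-T_{n,(j,k+1)}$ and passing to the limit in both tail bounds before letting $I\to+\infty$. With these two corrections your argument coincides with the paper's.
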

%
%
%
%
%
%

\subsection{Innovation-Pays-Off Phase}\label{sec:boseeinstein}
\vspace{-0.1cm}Assume that $h = \sup_{j\geq 1} f_j < +\infty$ and that
\vspace{-0.3cm}\begin{equation}\label{eq:be3}
\ical(h)\equiv\sum_{j=1}^{J} \frac{f_j q_j}{h - f_j} \leq 1.
\end{equation}

\vspace{-0.3cm}\noindent It is easy to check that this is possible only if the fitness
supremum $h$ is not attained in $\fcal$ (see also the discussion in
Example~\ref{ex:fitgetrich} of the appendix). In particular, it
must be that $J =+\infty$.
Now set $\nu'_{j} = h \frac{q_j}{h - f_j}$, for $1
\leq j < +\infty$, and note in particular
that
\begin{equation}\label{eq:sum is not 2}
\sum_{j=1}^{+\infty} \nu'_j = \sum_{j=1}^{+\infty} (h-f_j)
\frac{q_j}{h - f_j} + \sum_{j=1}^{+\infty} f_j \frac{q_j}{h -
f_j}=1+\ical(h) \leq 2,~~~~~~~
\end{equation}
with equality only if there is equality in
(\ref{eq:be3})\footnote{Strictly speaking, the equality case
belongs to the fit-get-rich phase since Equation \eqref{eq:be1}
has a solution, namely $h$; nevertheless we include it in this
section because its proof is more similar to the
innovation-pays-off phase.}.
Also, for all $1\leq j < +\infty$ and $k \geq 1$, let
$\eta'_{(j,k)}$ be defined as $\eta'_{(j,k)} = \frac{h q_j}{k(h -
f_j)}\prod_{l=2}^{k}\frac{l}{l + h f_{j}^{-1}}$.
In particular, $\frac{\eta'_{(j,k+1)}}{\eta'_{(j,k)}} =
\frac{k}{k+1}\frac{k+1}{k+1 + h f_{j}^{-1}} = 1 - \frac{1+ h
f_{j}^{-1}}{k}(1+o(1))$, as $k$ gets large.
Hence, for fixed $j$, $\{\eta'_{(j,k)}\}_{k\geq 1}$ has tail
exponent $h f_j^{-1}$.
\begin{theorem}[Discrete Case: Innovation-Pays-Off Phase]\label{thm:boseeinstein}
Let $h = \sup_{j\geq 1} f_j < +\infty$. Assume
\begin{equation}\label{eq:boseeinstein}
\sum_{j=1}^{+\infty} \frac{f_j q_j}{h - f_j} \leq 1.
\end{equation}
Then it holds that
\begin{enumerate}
\item For all $1 \leq j < +\infty$,~~$ \frac{M_{n,j}}{n} \to
\nu'_j,$~~almost surely as $n \to +\infty$.

\item For all $1\leq j < +\infty$ and $k \geq 1$,~~
$\frac{N_{n,(j,k)}}{n} \to \eta'_{(j,k)},$~~almost surely as $n
\to +\infty$.
\end{enumerate}
\end{theorem}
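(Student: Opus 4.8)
The plan is to exploit the coupling of Section~\ref{sec:coupling_discrete} between the $(\fcal,\qcal)$-chain and its upper and lower $I$-truncations, exactly as in the fit-get-richer case, but now pushing the truncation parameter $I \to +\infty$ and tracking what happens at the ``missing mass'' boundary $h$. First I would fix $I$ and invoke Proposition~\ref{prop:finitetype} applied to the finite-type chains $(\ovfcal,\qcal)$ and $(\undfcal,\qcal)$. For the lower truncation $\undfcal$, fitnesses beyond level $I$ are all set to $h$, so the total mass $q_{>I} := \sum_{j>I} q_j$ is concentrated at the single atom $h$; the associated Bose-Einstein equation~\eqref{eq:befinite} for $\undfcal$ has a solution $\undlambda_0^{(I)} > h$ because the left side now contains the term $h\,q_{>I}/(\undlambda_0^{(I)} - h)$, which blows up as $\undlambda_0^{(I)} \downarrow h$; hence Proposition~\ref{prop:finitetype} gives genuine limits $\undnu_j^{(I)}$ and $\undeta_{(j,k)}^{(I)}$ for the lower chain. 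For the upper truncation $\ovfcal$, the high fitnesses are mapped to $0$, so those vertices are never picked and the problem is again finite-type with limits $\ovnu_j^{(I)}$, $\oveta_{(j,k)}^{(I)}$. By Lemma~\ref{lem:couplingalone} and its counterpart for $N_{n,(j,k)}$ (Section~\ref{sec:app:coupling_discrete}), for every fixed $j$ and $k$ we sandwich
\begin{equation*}
\undnu_j^{(I)} \le \liminf_n \frac{M_{n,j}}{n} \le \limsup_n \frac{M_{n,j}}{n} \le \ovnu_j^{(I)},
\end{equation*}
and likewise for $N_{n,(j,k)}/n$ between $\undeta_{(j,k)}^{(I)}$ and $\oveta_{(j,k)}^{(I)}$.

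The second step is the analytic heart: show that as $I \to +\infty$, both $\undlambda_0^{(I)}$ and the ``effective'' parameter governing the upper chain converge to $h$, so that the sandwiching bounds both converge to the claimed values $\nu'_j = h\,q_j/(h-f_j)$ and $\eta'_{(j,k)}$. For the lower chain this amounts to proving $\undlambda_0^{(I)} \downarrow h$: plugging $\lambda = h + \delta$ into $\undfcal$'s version of~\eqref{eq:befinite} and using~\eqref{eq:boseeinstein} ($\ical(h) \le 1$) together with $q_{>I} \to 0$, one checks that the solution must satisfy $\undlambda_0^{(I)} - h \to 0$; continuity of $f_j/(\lambda - f_j)$ and $q_j/(\lambda - f_j)$ at $\lambda = h$ (valid since $f_j < h$ strictly for each fixed $j$, because $h$ is not attained) then yields $\undnu_j^{(I)} \to h\,q_j/(h-f_j) = \nu'_j$ and $\undeta_{(j,k)}^{(I)} \to \eta'_{(j,k)}$. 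For the upper chain, as $I \to +\infty$ the truncated-to-$0$ mass vanishes and the finite Bose-Einstein root $\ovlambda_0^{(I)}$ for $\ovfcal$ decreases to $h$ as well (the equation $\sum_{j\le I} f_j q_j/(\lambda - f_j) = 1$ has its root approaching the root of the full equation~\eqref{eq:be1}, which in this regime is pinned at $h$ since $\ical(h)\le 1$); hence $\ovnu_j^{(I)} \to \nu'_j$ and $\oveta_{(j,k)}^{(I)} \to \eta'_{(j,k)}$ too. Since the upper and lower bounds both converge to the same limit, the squeeze forces $M_{n,j}/n \to \nu'_j$ and $N_{n,(j,k)}/n \to \eta'_{(j,k)}$ almost surely.

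The main obstacle I anticipate is \emph{interchanging the order of the two limits} $n \to \infty$ and $I \to \infty$ cleanly, and in particular handling the upper-truncation side where sending fitnesses to $0$ is a rather violent perturbation: one must argue that for each fixed $j$, $\ovnu_j^{(I)}$ is monotone (non-increasing) in $I$ and bounded below by $\nu'_j$, so that the $I$-indexed family of almost-sure inequalities from Lemma~\ref{lem:couplingalone} can be combined on the single almost-sure event where all of them hold simultaneously (a countable intersection). A secondary subtlety is making the coupling of Section~\ref{sec:app:coupling_discrete} rigorous for the $N_{n,(j,k)}$ variables in the infinite-type setting and verifying that the equality case of~\eqref{eq:boseeinstein} — where~\eqref{eq:be1} \emph{does} have the solution $\lambda = h$ — is covered; there the conclusion coincides with the fit-get-richer formula evaluated at $\lambda_0 = h$, and the same sandwich argument applies verbatim, which is why it is natural to fold it into this theorem rather than the previous one.
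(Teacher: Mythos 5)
Your plan is correct and is essentially the paper's own proof: you sandwich the process via the $I$-truncation couplings (Lemma~\ref{lem:couplingalone} and the tail coupling of Lemma~\ref{lem:couplingfull}) together with Proposition~\ref{prop:finitetype}, and reduce everything to showing $\undlambda^I_0,\ovlambda^I_0\to h$, which the paper establishes by exactly the estimates you sketch ($h^I<\ovlambda^I_0\le h$ because $\ovss^I(h)<S(h)\le 1$, and $\undss^I(h+\eps)\to S(h+\eps)\le 1$ forces $h<\undlambda^I_0\le h+\eps$ eventually), with the equality case folded in as you note. Only cosmetic corrections: $\ovlambda^I_0$ in fact increases to $h$ from below rather than decreases; the coupling controls the tails $T_{n,(j,k)}$ rather than $N_{n,(j,k)}$ directly, so the per-degree limit is recovered by differencing consecutive tails; and no monotonicity in $I$ is needed, since a countable intersection of the almost-sure events followed by a deterministic $I\to+\infty$ limit handles the interchange of limits.
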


\section{Open Problems}

A challenging open problem is to give an exact
quantitative description of the dynamics of the innovation-pays-off phase. Our
results imply that a constant fraction of the links ``escapes at infinity.''
But we know little about the transient behavior in this regime. How are the links distributed
among the highest fitnesses present in the system at any given time? At what rate are 
new nodes with higher fitnesses
taking over? How does the transient behavior depend on the fitness distribution?
This could have important practical implications. 



{
\section*{Acknowledgments}

We thank Kamal Jain for helpful discussions in the initial stages of this project.

}

\bibliographystyle{plain}
\bibliography{all}

\begin{thebibliography}{10}

\bibitem{AthreyaNey:72}
K.~B. Athreya and P.~E. Ney.
\newblock {\em Branching processes}.
\newblock Springer-Verlag, New York, 1972.
\newblock Die Grundlehren der mathematischen Wissenschaften, Band 196.

\bibitem{barabasi-1999-286}
Albert-L{\'a}szl{\'o} Barab{\'a}si and R\'eka Albert.
\newblock Emergence of scaling in random networks.
\newblock {\em Science}, 286:509--512, 1999.

\bibitem{BeBoChSa:05}
Noam Berger, Christian Borgs, Jennifer~T. Chayes, and Amin Saberi.
\newblock On the spread of viruses on the internet.
\newblock In {\em SODA}, pages 301--310, 2005.

\bibitem{BianconiBarabasi:01}
Ginestra Bianconi and Albert-L\'aszl\'o Barab\'asi.
\newblock Bose-einstein condensation in complex networks.
\newblock {\em Phys. Rev. Lett.}, 86(24):5632--5635, Jun 2001.

\bibitem{BoBoChRi:03}
B{\'e}la Bollob{\'a}s, Christian Borgs, Jennifer~T. Chayes, and Oliver Riordan.
\newblock Directed scale-free graphs.
\newblock In {\em SODA}, pages 132--139, 2003.

\bibitem{BollobasR04}
B{\'e}la Bollob{\'a}s and Oliver Riordan.
\newblock The diameter of a scale-free random graph.
\newblock {\em Combinatorica}, 24(1):5--34, 2004.

\bibitem{BollobasRST01}
B{\'e}la Bollob{\'a}s, Oliver Riordan, Joel Spencer, and G{\'a}bor~E.
  Tusn{\'a}dy.
\newblock The degree sequence of a scale-free random graph process.
\newblock {\em Random Struct. Algorithms}, 18(3):279--290, 2001.

\bibitem{Broder&etal2000a}
Andrei Broder, Ravi Kumar, Farzin Maghoul, Prabhakar Raghavan, Sridhar
  Rajagopalan, Raymie Stata, Andrew Tomkins, and Janet Wiener.
\newblock Graph structure in the web.
\newblock {\em Journal of Computer Networks (Amsterdam)}, 33(1--6):309--320,
  Jun 2000.

\bibitem{Cooper&Frieze2001}
Colin Cooper and Alan Frieze.
\newblock A general model of web graphs, 2001.

\bibitem{Price1965}
Derek~J. de~Solla~Price.
\newblock Networks of scientific papers.
\newblock {\em Science}, 149(3683):510--515, July 30 1965.

\bibitem{drinea@variations}
Eleni Drinea, Mihaela Enachescu, and Michael Mitzenmacher.
\newblock Variations on random graph models for the {W}eb.
\newblock Technical Report TR--06--01, Harvard University, 2001.

\bibitem{DBLP:conf/sigcomm/FaloutsosFF99}
Michalis Faloutsos, Petros Faloutsos, and Christos Faloutsos.
\newblock On power-law relationships of the internet topology.
\newblock In {\em SIGCOMM}, pages 251--262, 1999.

\bibitem{Gilbert1997}
Nigel Gilbert.
\newblock A simulation of the structure of academic science.
\newblock {\em Sociological Research Online}, 2(2), 1997.

\bibitem{Janson:04}
Svante Janson.
\newblock Functional limit theorems for multitype branching processes and
  generalized {P}\'olya urns.
\newblock {\em Stochastic Process. Appl.}, 110(2):177--245, 2004.

\bibitem{DBLP:conf/focs/Kleinberg06}
Jon Kleinberg.
\newblock The emerging intersection of social and technological networks: Open
  questions and algorithmic challenges.
\newblock In {\em FOCS}, 2006.

\bibitem{DBLP:conf/cocoon/KleinbergKRRT99}
Jon~M. Kleinberg, Ravi Kumar, Prabhakar Raghavan, Sridhar Rajagopalan, and
  Andrew Tomkins.
\newblock The web as a graph: Measurements, models, and methods.
\newblock In {\em COCOON}, pages 1--17, 1999.

\bibitem{Krapivsky&Redner2001}
P.~L. Krapivsky and S.~Redner.
\newblock Organization of grwoing random networks.
\newblock {\em Physical Review E}, 63(6):066123--1--066123--14, June 2001.

\bibitem{DBLP:conf/focs/KumarRRSTU00}
Ravi Kumar, Prabhakar Raghavan, Sridhar Rajagopalan, D.~Sivakumar, Andrew
  Tomkins, and Eli Upfal.
\newblock Random graph models for the web graph.
\newblock In {\em FOCS}, pages 57--65, 2000.

\bibitem{Lotka1926}
A.~J. Lotka.
\newblock The frequency distribution of scientific productivity.
\newblock {\em Journal of the Washington Academy of Science}, 16(12):317--323,
  June 19 1926.

\bibitem{Mitzenmacher:04}
Michael Mitzenmacher.
\newblock A brief history of generative models for power law and lognormal
  distributions.
\newblock {\em Internet Math.}, 1(2):226--251, 2004.

\bibitem{DBLP:conf/pods/MotwaniX06}
Rajeev Motwani and Ying Xu.
\newblock Evolution of page popularity under random web graph models.
\newblock In {\em PODS}, pages 134--142, 2006.

\bibitem{DBLP:conf/stoc/Papadimitriou01}
Christos~H. Papadimitriou.
\newblock Algorithms, games, and the internet.
\newblock In {\em STOC}, pages 749--753, 2001.

\bibitem{DBLP:conf/stoc/Raghavan06}
Prabhakar Raghavan.
\newblock The changing face of web search: algorithms, auctions and
  advertising.
\newblock In {\em STOC}, page 129, 2006.

\bibitem{Simon1955}
Herbert~A. Simon.
\newblock On a class of skew distribution functions.
\newblock {\em Biometrika}, 42(4):425--440, December 1955.

\bibitem{Yule:25}
G.~Yule.
\newblock A mathematical theory of evolution based on the conclusions of dr.
  j.c. willis.
\newblock {\em F.R.S. Philosophical Transactions of the Royal Society of
  London}, 213(B):21—--87, 1925.

\bibitem{Zipf:49}
George~K. Zipf.
\newblock {\em Human Behavior and The Principles of Least Effort}.
\newblock Addison Wesley, Cambridge, MA, 1949.

\end{thebibliography}

\clearpage

\begin{appendix}
\section{Analysis of Bounded Discrete Fitness Distributions} \label{sec:app:discrete_finite}
\begin{prevproof}{Proposition}{prop:1fit}
We complete the proof of Proposition~\ref{prop:1fit} by computing
the largest positive eigenvalue $\lambda_1$ and the corresponding
left eigenvector $v_1$ of the matrix $A$. Because the $\xi_i$'s
are deterministic, it follows that $A_{ij} = \xi_{i,j}$ for all
$1\leq i,j\leq q$. To compute $\lambda_1$ we first compute the
corresponding {\em right} eigenvector. Note that
\begin{equation*}
\sum_{j=1}^q \xi_{i,j} = 2,
\end{equation*}
for all $1\leq i\leq q$ and therefore $u_1$ is $(1,\ldots,1)$ (up
to a constant factor) and $\lambda_1 = 2$. The left eigenvector
$v_1$ must satisfy,
\begin{equation*}
\sum_{i=1}^q (v_1)_i = 1,
\end{equation*}
by convention, as well as,
\begin{equation*}
\sum_{i=2}^q (v_1)_i = 2 (v_1)_1,
\end{equation*}
which with the previous equation implies $(v_1)_1 = 1/3$. Also,
for $2 \leq l\leq q-1$,
\begin{equation*}
l(v_1)_{l-1} -l (v_1)_l = 2 (v_1)_l,
\end{equation*}
or,
\begin{equation*}
\frac{(v_1)_l}{(v_1)_{l-1}} = \frac{l}{l+2}.
\end{equation*}
Therefore,
\begin{equation*}
(v_1)_k = \prod_{i=1}^k \frac{l}{l+2}.
\end{equation*}
Finally, by Theorem~\ref{thm:basic}, we get
\begin{equation*}
\frac{L_{n,k}}{n} = \frac{X_{n,k}}{kn} \to \frac{2(v_1)_k}{k} =
\mu_k.
\end{equation*}
almost surely as $n\to+\infty$.
\end{prevproof}

\medskip
\begin{prevproof}{Proposition}{prop:alone}
We complete the proof of Proposition~\ref{prop:alone} by computing
the largest positive eigenvalue $\lambda_1$ and the corresponding
left eigenvector $v_1$ of the matrix $A$ which has the following
form
\begin{equation*}
A_{ij} = f_i(q_j + \onebf_{\{i=j\}}).
\end{equation*}
We compute the corresponding $\lambda_1, v_1$. For all $1\leq
j\leq J$, $v_1$ must satisfy
\begin{equation}\label{eq:eigenvectoralone}
q_j \sum_{i=1}^J f_i (v_1)_i + f_j (v_1)_j = \lambda_1 (v_1)_j.
\end{equation}
By the convention
\begin{equation}\label{eq:conv}
a\cdot v_1 = 1
~~~~\Leftrightarrow~~~~ \sum_{i=1}^J f_i (v_1)_i =
1,
\end{equation}
it follows that, for all $1\leq j\leq
J$,
\begin{equation*}
(v_1)_j = \frac{q_j}{\lambda_1 - f_j}.
\end{equation*}
Plugging back into (\ref{eq:conv}), we get
\begin{equation*}
\sum_{j=1}^J \frac{f_j q_j}{\lambda_1 - f_j} = 1.
\end{equation*}
Therefore, $\lambda_1 = \lambda_0$ and $(v_1)_j = (\lambda_1)^{-1}
\nu_j$ for all $1\leq j\leq J$. The result follows by
Theorem~\ref{thm:basic}.
\end{prevproof}

\medskip
\begin{prevproof}{Proposition}{prop:finitetype}
Fix $1\leq j\leq J$ and $k\geq 1$. Set $r=k+1$ and $q=r J$.
Consider the following urn process which is a combination of those
in Propositions~\ref{prop:1fit} and~\ref{prop:alone}. We now have
a bin---indexed $(i,l)$---for each fitness $f_i$ and each degree
$l$ up to $k$. The number of balls in bin $(i,l)$ at time $n$ is
denoted $X_{n,(i,l)}$. The urn process is defined so that
$X_{n,(i,l)} = l N_{n,(i,l)}$ (see below). Also, for each $i$, the
bin $(i,r)$ counts all the links attached to a vertex of fitness
$f_i$ and degree more than $k$, that is we have
\begin{equation*}
X_{n,(i,r)} = \sum_{l\geq k+1} l N_{n,(i,l)}.
\end{equation*}
The activity of bin $(i,l)$ is $a_{(i,l)} = f_i$. Say at step $n$
we pick a ball from bin $(i,l)$ with $1 < l < r$. Then,
\begin{enumerate}
\item we choose a fitness, say $i'$, according to $\qcal$; \item
we add one ball to bin $(i',1)$; \item we remove $l$ balls from
bin $(i,l)$; \item we add $l+1$ balls to bin $(i,l+1)$.
\end{enumerate}
The cases $l=1, r$ are handled similarly (see
Proposition~\ref{prop:1fit}).

We compute matrix $A$. Let $(i,l)$ be such that $1 < l < r$. Then
row $(i,l)$ of $A$ is
\begin{equation*}
A_{(i,l),(i',l')} = \left\{\begin{array}{ll}
- f_i l, & i' = i, l'=l\\
f_i (l+1), & i' = i, l'=l+1\\
f_i q_{i'}, & l' = 1\\
0, & \mathrm{o.w.}
\end{array}
\right.
\end{equation*}
For $l=1$, we get
\begin{equation*}
A_{(i,1),(i',l')} = \left\{\begin{array}{ll}
f_i (-1 + q_i), & i' = i, l'=1\\
2 f_i , & i' = i, l'=2\\
f_i q_{i'}, & i'\neq i, l' = 1\\
0, & \mathrm{o.w.}
\end{array}
\right.
\end{equation*}
and, for $l=r$,
\begin{equation*}
A_{(i,r),(i',l')} = \left\{\begin{array}{ll}
f_i, & i' = i, l'=r\\
f_i q_{i'}, & l' = 1\\
0, & \mathrm{o.w.}
\end{array}
\right.
\end{equation*}
We compute the corresponding $\lambda_1, u_1, v_1$. Consider the
following guess for $u_1$
\begin{equation*}
(u_1)_{(i,l)} = \frac{f_i}{\lambda_0 - f_i},
\end{equation*}
for all $1\leq i\leq J$ and $1\leq l\leq r$ where $\lambda_0$ is
defined in (\ref{eq:befinite}). Then we have for $1\leq i\leq J$
and $1 \leq l \leq q$,
\begin{eqnarray*}
\sum_{(i',l')} A_{(i,l),(i',l')} (u_1)_{(i',l')}
&=& f_i \sum_{i'} \frac{f_{i'} q_{i'}}{\lambda_0 - f_{i'}} + \frac{f_i^2}{\lambda_0 - f_i}\\
&=& f_i + \frac{f_i^2}{\lambda_0 - f_i}\\
&=& \frac{f_i}{\lambda_0 - f_i} \left(\lambda_0 - f_i + f_i\right)\\
&=& \lambda_0 (u_1)_{(i,l)},
\end{eqnarray*}
where we used (\ref{eq:befinite}). Hence, the Perron-Frobenius
eigenvalue is $\lambda_1 = \lambda_0$ and the corresponding right
eigenvector is $u_1$ as above.

It remains to compute $v_1$. Define the auxiliary vector
\begin{equation*}
(\tilv_1)_i = \sum_{l=1}^r (v_1)_{(i,l)},
\end{equation*}
for $1\leq i\leq J$. Then, by looking at column $(i,1)$ of $A$, we
must have
\begin{equation}\label{eq:finite1}
q_{i} \sum_{i'=1}^J f_{i'} (\tilv_1)_{i'} - f_i (v_1)_{(i,1)} =
\lambda_1 (v_1)_{(i,1)},
\end{equation}
for all $1\leq i\leq J$. From column $(i,r)$ we get
\begin{equation}\label{eq:finite2}
f_i (r(v_1)_{(i,r-1)} + (v_1)_{i,r}) = \lambda_1 (v_1)_{(i,r)}.
\end{equation}
Finally, for $1< l< r$, column $(i,l)$ gives
\begin{equation}\label{eq:finite3}
f_i (l(v_1)_{(i,l-1)} - l(v_1)_{i,l}) = \lambda_1 (v_1)_{(i,l)}.
\end{equation}
Summing~(\ref{eq:finite1}),~(\ref{eq:finite2}),
and~(\ref{eq:finite3}), we obtain
\begin{equation*}
q_{i} \sum_{i'=1}^J f_{i'} (\tilv_1)_{i'} + f_i (\tilv_1)_{i} =
\lambda_1 (\tilv_1)_{i}.
\end{equation*}
This is identical to (\ref{eq:eigenvectoralone}) from
Proposition~\ref{prop:alone} and therefore
\begin{equation*}
(\tilv_1)_i = \frac{q_i}{\lambda_1 - f_i},
\end{equation*}
for all $1\leq i\leq J$. Also, from (\ref{eq:finite3}), for $1< l<
r$, we get
\begin{equation*}
\frac{(v_1)_{(i,l)}}{(v_1)_{(i,l-1)}} = \frac{l}{l+\lambda_1
f_{i}^{-1}}.
\end{equation*}
By our convention,
\begin{equation*}
\sum_{i'=1}^J f_{i'} (\tilv_1)_{i'} = 1,
\end{equation*}
we get from (\ref{eq:finite1}),
\begin{equation*}
(v_1)_{(i,1)} = \frac{q_i}{\lambda_1 + f_i}.
\end{equation*}

From Theorem~\ref{thm:basic}, we derive
\begin{equation*}
\frac{N_{n,(j,k)}}{n} = \frac{X_{n,(j,k)}}{kn} \to \frac{\lambda_1
(v_1)_{(j,k)}}{k} = \eta_{(j,k)},
\end{equation*}
almost surely as $n\to +\infty$. This concludes the proof.
\end{prevproof}

\section{Analysis of Countable Discrete Fitness Distributions} \label{sec:app:discrete_countable}

\subsection{Coupling} \label{sec:app:coupling_discrete}
We derive bounds on $N_{n,(j,k)}$, defined to be the number of
vertices of fitness $f_j$ and degree $k$ at time $n$ in the
$(\fcal,\qcal)$-chain of Section \ref{sec:countable}. Fix $I$ and
let $\ovnn_{n,(j,k)}, \undnn_{n,(j,k)}$ be the corresponding
variables for the chains $(\ovfcal,\qcal)$ and $(\undfcal,\qcal)$
of Section \ref{sec:coupling_discrete} defined by the
$I$-truncations of $\fcal$. Since the latter have finite fitness
distributions, we can apply Proposition~\ref{prop:finitetype}. Let
$\oveta_{(j,k)}$ and $\undeta_{(j,k)}$ be the almost sure limits
of $n^{-1}\ovnn_{n,(j,k)}$ and $n^{-1}\undnn_{n,(j,k)}$. For the
full coupling, we also need the degree tails for a fixed fitness.
Let
\begin{equation*}
T_{n,(j,k)} = \sum_{k' \geq k} k' N_{n,(j,k')},
\end{equation*}
and similarly for $\ovtt_{n,(j,k)}$ and $\undtt_{n,(j,k)}$. Also,
let
\begin{equation*}
\ovtau_{(j,k)} = \sum_{k'\geq k} k' \oveta_{(j,k')},
\end{equation*}
and similarly for $\undtau_{(j,k)}$. These are well-defined
because the partial sums are increasing and bounded by 2 (see the
proof of Proposition~\ref{prop:finitetype}). The following lemma
provides a full coupling of the chains $(\fcal,\qcal)$,
$(\ovfcal,\qcal)$ and $(\undfcal,\qcal)$.

\begin{lemma}[Coupling: Full Analysis]\label{lem:couplingfull}
For all $1 \leq j \leq I$ and $k\geq 1$, it holds almost surely
that
\begin{equation*}
\limsup_{n \to +\infty} \frac{T_{n,(j,k)}}{n} \leq \ovtau_{(j,k)}
\text{~~~~and~~~~} \liminf_{n \to +\infty} \frac{T_{n,(j,k)}}{n}
\geq \undtau_{(j,k)}.
\end{equation*}
\end{lemma}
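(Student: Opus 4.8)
The plan is to bootstrap from Lemma~\ref{lem:couplingalone} (Coupling: Fitness Alone) and extend the coupling of the three chains $(\fcal,\qcal)$, $(\ovfcal,\qcal)$, $(\undfcal,\qcal)$ so that it simultaneously controls, for each fixed fitness level $f_j$ with $j \le I$, the degree-tail statistics $T_{n,(j,k)}$. The key structural observation is that the coupling constructed in the proof of Lemma~\ref{lem:couplingalone} already picks, at each step, the \emph{same} old-vertex fitness level in all three chains \emph{whenever that level is one of $f_1,\dots,f_I$} (this is exactly the first two bullets of that coupling; in the third bullet the $(\fcal,\qcal)$-chain picks a level $>I$, which is irrelevant to levels $j \le I$). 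So I would refine that coupling one notch further: conditioned on picking level $f_j$ ($j \le I$) in all three chains, also couple \emph{which vertex} of fitness $f_j$ is hit, so that the chain with the stochastically larger degree profile at level $j$ hits a vertex of at least as large a degree. Maintaining this requires carrying, in addition to Conditions 1.--3. of Lemma~\ref{lem:couplingalone}, the inductive invariant
\begin{equation*}
\undtt_{n,(j,k)} \;\le\; T_{n,(j,k)} \;\le\; \ovtt_{n,(j,k)}
\qquad\text{for all } 1\le j\le I,\ k\ge 1,\ n\ge 0,
\end{equation*}
and symmetrically $\undnn_{n,(j,k)} \le N_{n,(j,k)} \le \ovnn_{n,(j,k)}$.

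The steps, in order, would be: (i) recall the finite-type urn result (Proposition~\ref{prop:finitetype}) applied to $(\ovfcal,\qcal)$ and $(\undfcal,\qcal)$, which gives the a.s.\ limits $\oveta_{(j,k)}$, $\undeta_{(j,k)}$ and hence, by dominated convergence on the partial sums (bounded by $2$), the limits $\ovtau_{(j,k)} = \sum_{k'\ge k} k'\oveta_{(j,k')}$ and $\undtau_{(j,k)}$; (ii) set up the refined coupling described above, verifying that the monotonicity in the cumulative degree at levels $\le I$ (already established in Lemma~\ref{lem:couplingalone}) propagates to monotonicity of the full degree \emph{profiles} $\{N_{n,(j,k)}\}_k$ at those levels --- the point being that a higher cumulative degree combined with a coupled ``attach to the larger-degree vertex'' rule keeps the profile stochastically dominated; (iii) conclude that $\undtt_{n,(j,k)} \le T_{n,(j,k)} \le \ovtt_{n,(j,k)}$ pathwise for all $n$, divide by $n$, and take $\limsup$/$\liminf$ using the a.s.\ limits from step~(i).

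The main obstacle I expect is step~(ii): unlike in Lemma~\ref{lem:couplingalone}, where only the \emph{sum} $M_{n,j} = \sum_k k N_{n,(j,k)}$ needed to be monotone, here I need the entire degree profile at level $j$ to be comparable between chains, and a single larger cumulative degree does not by itself force profile domination --- one genuinely needs the attachment step to be coupled so that the more-loaded chain attaches to a higher-degree vertex. Making this precise requires a careful induction showing that the invariant ``larger cumulative degree $\Rightarrow$ dominated degree tails $T_{n,(j,\cdot)}$'' is preserved by (a) an attachment at level $j$ in all three chains (handled by the vertex-level coupling, which shifts a vertex from degree $k$ to $k+1$ in a way that respects the tail ordering), and (b) an attachment at a level $\ne j$ or $>I$ (which changes the \emph{number of vertices of degree $1$ at level $j$} in the $(\fcal,\qcal)$-chain but not the larger-degree tails, and here one uses that new vertices enter at degree $1$ with fitness drawn from the \emph{same} $\qcal$ in all chains, cf.\ the coupling of $F_n'$). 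Once this bookkeeping is in place, the rest is routine, and the statement follows exactly as the ``Fitness Alone'' case did. I would relegate the detailed verification of the invariant's preservation to the same style of case analysis used in Lemma~\ref{lem:couplingalone}.
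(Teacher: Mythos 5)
Your plan follows essentially the paper's own route: retain the fitness-level coupling of Lemma~\ref{lem:couplingalone}, add a monotone coupling of the degree of the old vertex that is hit, carry the pathwise invariant $\undtt_{n,(j,k)} \leq T_{n,(j,k)} \leq \ovtt_{n,(j,k)}$, and conclude by Proposition~\ref{prop:finitetype} applied to the truncated chains after dividing by $n$. The paper implements your ``attach to a vertex of at least as large degree'' rule concretely: it introduces $\sigma_{n-1,(i,k)}$, the probability given the state after step $n-1$ of the event $\{F_n=f_i,\ D_n\geq k\}$ (and $\undsigma,\ovsigma$ for the truncations), adds the invariant $\undsigma_{n,(i,k)}\leq\sigma_{n,(i,k)}\leq\ovsigma_{n,(i,k)}$ alongside the $T$-invariant, and couples the degrees by drawing a single uniform random variable and inverting the three tails simultaneously; this is exactly the bookkeeping you anticipate in your step (ii).

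One ingredient of your plan is genuinely wrong and would derail the induction if kept: the ``symmetric'' invariant $\undnn_{n,(j,k)}\leq N_{n,(j,k)}\leq\ovnn_{n,(j,k)}$ on exact-degree counts cannot be maintained. Under the coupling the new vertex's fitness \emph{index} is drawn identically in all three chains (recall $\qcal$ is viewed as a distribution on indices), so for every $j\leq I$ the three chains contain exactly the same number of vertices of fitness index $j$ at every time; pointwise domination of $\{N_{n,(j,k)}\}_{k\geq1}$ by $\{\ovnn_{n,(j,k)}\}_{k\geq1}$ together with equal totals would force the two degree profiles at level $j$ to coincide for all $n$, which is false as soon as the chains attach at level $j$ at different steps (e.g.\ in the third bullet of the coupling only the $(\ovfcal,\qcal)$-chain attaches at a level $\leq I$). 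Drop that invariant: the cumulative, degree-weighted tails $T_{n,(j,\cdot)}$ are the correct monotone statistics, they are all the limit statement requires, and they (together with the $\sigma$-invariant) are what the single-uniform inverse-transform coupling preserves. With that correction, your argument is the paper's.
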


\begin{prevproof}{Lemma}{lem:couplingfull}
As in Lemma~\ref{lem:couplingalone}, we couple the
$(\fcal,\qcal)$-chain and the truncations. We use the notation of
Lemma~\ref{lem:couplingalone}. Also, for $k\geq 1$, let $D_n$ be
the degree of the vertex picked at time $n$ in the
$(\fcal,\qcal)$-chain (and similarly for $\ovdd_n, \unddd_n$). For
$1\leq i\leq I$ and $k \geq 1$, let $\sigma_{n-1,(i,k)}$ be the
probability of the event $\{F_n = f_i, D_n \geq k\}$ given the
state after time $n-1$ in the $(\fcal,\qcal)$-chain (and similarly
for $\ovsigma_n, \undsigma_n$). We require the following
conditions to be satisfied:
\begin{enumerate}
\item For all $n \geq 1$,
\begin{equation*}
\ovff_n \leq F_n \leq \undff_n,
\end{equation*}
\begin{equation*}
\ovff_n' \leq F_n' \leq \undff_n'.
\end{equation*}

\item For all $n\geq 1$ and all $1\leq i\leq I$,
\begin{equation*}
\undmm_{n,i} \leq M_{n,i} \leq \ovmm_{n,i}.
\end{equation*}

\item For all $n\geq 1$ and all $1\leq i\leq I$,
\begin{equation*}
\undrho_{n,i} \leq \rho_{n,i} \leq \ovrho_{n,i}.
\end{equation*}

\item For all $n\geq 1$, $1\leq i\leq I$, and $k\geq 1$,
\begin{equation*}
\undtt_{n,(i,k)} \leq T_{n,(i,k)} \leq \ovtt_{n,(i,k)}.
\end{equation*}

\item For all $n\geq 1$, $1\leq i\leq I$, and $k\geq 1$,
\begin{equation*}
\undsigma_{n,(i,k)} \leq \sigma_{n,(i,k)} \leq \ovsigma_{n,(i,k)}.
\end{equation*}
\end{enumerate}
These conditions are somewhat redundant but we keep all of them
for clarity. In particular, note that 3. follows from 1.~and 2.,
that 5.~follows from 1.~and 4.,
 and that 2.~and 3.~are special cases of 4.~and 5.
Assume these conditions hold up to $n-1$. Our step-by-step
coupling has two parts. First, we pick the fitnesses $\ovff_n,
F_n, \undff_n, \ovff_n', F_n', \undff_n'$ using the scheme
described in the proof of Lemma~\ref{lem:couplingalone}. We then
pick the degrees $\unddd_n, D_n, \ovdd_n$ by picking a {\em
single} uniform random variable in $[0,1]$ and ``inverting''
simultaneously the tails $\{\undsigma_{n,(\undff_n,k)}\}_{k \geq
1}$, $\{\sigma_{n,(F_n,k)}\}_{k \geq 1}$, and
$\{\ovsigma_{n,(\ovff_n,k)}\}_{k \geq 1}$. (This is sometimes
called the ``inverse transform sampling method''.) It is easy to
check that all conditions are then satisfied at time $n$.
\end{prevproof}

\subsection{Fit-Get-Richer Phase}

\begin{prevproof}{Theorem}{thm:fitgetrich}
We only need to consider the case $J=+\infty$. Fix $1 \leq j <
+\infty$ and $k \geq 1$. Let $1\leq I < +\infty$ and consider once
again the $I$-truncations of the $(\fcal,\qcal)$-chain. Let
$\undnu^I_j, \ovnu_j^I, \undeta^I_{(j,k)}, \oveta^I_{(j,k)},
\undtau^I_{(j,k)}, \ovtau^I_{(j,k)}$ be as in
Lemmas~\ref{lem:couplingalone},~\ref{lem:couplingfull} (we now
indicate the dependence on $I$ because we will need to take $I \to
+\infty$). Similarly, let $\undlambda^I_0$ and $\ovlambda^I_0$ be
the largest solution to (\ref{eq:be1}) for the lower and upper
truncations. By the coupling lemmas, it suffices to prove
\begin{equation}\label{eq:lambda0}
\undlambda^I_0, \ovlambda^I_0 \to \lambda_0,
\end{equation}
as $I \to +\infty$. Indeed, in that case
\begin{equation*}
\undnu^I_j, \ovnu^I_j \to \nu_j,
\end{equation*}
as $I \to +\infty$, which implies
\begin{equation*}
\frac{M_{n,j}}{n} \to \nu_j,
\end{equation*}
by Lemma~\ref{lem:couplingalone}. Also, for all $l \leq k$,
\begin{equation*}
\undeta^I_{(j,l)}, \oveta^I_{(j,l)} \to \eta_{(j,l)},
\end{equation*}
as $I \to +\infty$, which implies
\begin{equation*}
\undtau^I_{(j,k)} = \undnu^I_j - \sum_{l\leq k} l\undeta^I_{(j,l)}
\to \nu_j - \sum_{l\leq k} l\eta_{(j,l)},
\end{equation*}
as $I \to +\infty$, and similarly for $\ovtau^I_{(j,k)}$. This
also holds for $k-1$ so that, by Lemma~\ref{lem:couplingfull}, we
have
\begin{equation*}
\frac{N_{n,(j,k)}}{n} \to \eta_{(j,k)},
\end{equation*}
almost surely as $n \to +\infty$.

It remains to prove (\ref{eq:lambda0}). We argue about
$\ovlambda^I_0$. The proof for $\undlambda^I_0$ is similar and is
omitted. Let
\begin{equation*}
S(\lambda) := \sum_{i=1}^{+\infty} \frac{f_i q_i}{\lambda - f_i},
\qquad \ovss^I(\lambda) := \sum_{i=1}^{+\infty} \frac{\ovf_i
q_i}{\lambda - \ovf_i}.
\end{equation*}
Note that for $\lambda' > \lambda > h$, we have
\begin{equation*}
S(\lambda'), S(\lambda) \leq \frac{h}{\lambda - h}, \qquad
S(\lambda') < S(\lambda),
\end{equation*}
and
\begin{equation*}
\left|S(\lambda') - S(\lambda)\right| \leq \frac{|\lambda' -
\lambda|h}{|\lambda' - h||\lambda - h|}.
\end{equation*}
Therefore, $S$ is continuous and strictly decreasing on $\{\lambda
> h\}$. Also, by definition of $\ovfcal$, we have
\begin{equation*}
\ovrr^I(\lambda) := S(\lambda) - \ovss^I(\lambda) =
\sum_{i=I+1}^{+\infty} \frac{f_i q_i}{\lambda - f_i}.
\end{equation*}
Therefore, for $\lambda > h$,
\begin{equation*}
\left|\ovrr^I(\lambda)\right| \leq \frac{h}{\lambda - h}
\sum_{i=I+1}^{+\infty} q_i \to 0,
\end{equation*}
as $I \to +\infty$. Hence, for all $\eps > 0$ (small enough),
\begin{equation*}
\lim_{I \to \infty} \ovss^I(\lambda_0 +\eps) = S(\lambda_0 + \eps)
< 1, \qquad \lim_{I \to \infty} \ovss^I(\lambda_0 - \eps) =
S(\lambda_0 - \eps) > 1,
\end{equation*}
so that eventually
\begin{equation*}
\lambda_0 - \eps \leq \ovlambda^I_0 \leq \lambda_0 + \eps.
\end{equation*}
Since $\eps > 0$ is arbitrary, we have (\ref{eq:lambda0}).
\end{prevproof}

\subsection{Innovation-Pays-Off Phase}

\begin{example}\label{ex:fitgetrich}
The case $J < +\infty$ always satisfies (\ref{eq:be2}). Indeed, in that case,
\begin{equation}\label{eq:infinity}
\sum_{j=1}^{J} \frac{f_j q_j}{h - f_j} = +\infty.
\end{equation}
Likewise, when
$J=+\infty$ and the fitness supremum $h$ is attained, we also get
(\ref{eq:infinity}).
\end{example}

\begin{example}
Consider the case $f_j = 1 - j^{-1}$ for all $j \geq 1$ and
\begin{equation*}
q_j = \frac{j^{2+\theta}}{\zeta(2+\theta)},
\end{equation*}
where $\zeta$ is the Riemann zeta function. In particular, by
definition, $\sum_{j\geq 1} q_j = 1$. Here, $h = 1$ is not
attained. We now compute the sum in (\ref{eq:be3}). We have
\begin{eqnarray*}
\sum_{j=1}^{J} \frac{f_j q_j}{h - f_j}
&=& \sum_{j\geq 1} \frac{(1 - j^{-1})\zeta^{-1}(2+\theta)j^{-2-\theta}}{j^{-1}}\\
&=& \zeta^{-1}(2+\theta) \left(\sum_{j\geq 1}j^{-1-\theta} - \sum_{j\geq 1} j^{-2-\theta}\right)\\
&=& \frac{\zeta(1+\theta) - \zeta(2+\theta)}{\zeta(2+\theta)}.
\end{eqnarray*}
One can check that the last line is $<1$ when $\theta > 1$. This
example can be seen as a ``discretization'' of the example given
in~\cite{BianconiBarabasi:01}.
\end{example}

\begin{prevproof}{Theorem}{thm:boseeinstein}
We use the notations of Theorem~\ref{thm:fitgetrich}. Similarly to
Theorem~\ref{thm:fitgetrich}, it suffices to prove
\begin{equation}\label{eq:lambda0be}
\undlambda^I_0, \ovlambda^I_0 \to h,
\end{equation}
as $I \to +\infty$. Let
\begin{equation*}
h^I = \sup_{j\leq I} f_j.
\end{equation*}
By a remark above the statement of the Theorem, we know that $h^I
< h$ and $h^I \to h$ as $I \to +\infty$.

We first argue about $\ovlambda^I_0$. Note that $\ovlambda^I_0 >
h^I$. Also, $\ovss^I(h) < S(h) \leq 1$ and therefore
$\ovlambda^I_0 \leq h$. That implies $\ovlambda^I_0 \to h$.

Now consider the case of $\undlambda^I_0$. Let
\begin{equation*}
\undrr^I(\lambda) := S(\lambda) - \undss^I(\lambda).
\end{equation*}
We have, for all $\eps > 0$,
\begin{equation*}
S(h + \eps) < S(h) \leq 1,
\end{equation*}
and
\begin{equation*}
\left|\undrr^I(h + \eps)\right| \leq \frac{h}{\eps}
\sum_{i=I+1}^{+\infty} q_i \to 0,
\end{equation*}
as $I \to +\infty$. Hence, for all $\eps > 0$,
\begin{equation*}
\lim_{I \to \infty} \undss^I(h +\eps) = S(h + \eps) < 1,
\end{equation*}
so that eventually
\begin{equation*}
h^I < \undlambda^I_0 \leq h + \eps.
\end{equation*}
Since $\eps > 0$ is arbitrary, we have $\undlambda^I_0 \to h$ as
$I\to +\infty$.
\end{prevproof}

\subsection{Unbounded Countable Case}\label{sec:unbounded}

Assume $h = \sup_{j\geq 1} f_j = +\infty$, i.e.~the set of
fitnesses is unbounded. In that case, the lower bounds in the
coupling lemmas cannot be used but it turns out that the upper
bounds suffice to characterize the limit behavior of the process.
\begin{theorem}[Discrete Case: Unbounded Fitness]\label{thm:unbounded}
Assume $\sup_{j\geq 1} f_j = +\infty$. Then it holds that
\begin{enumerate}
\item For all $1 \leq j < +\infty$,
\begin{equation*}
\frac{M_{n,j}}{n} \to q_j,
\end{equation*}
almost surely as $n \to +\infty$.

\item For all $1\leq j < +\infty$ and $k > 1$,
\begin{equation*}
\frac{T_{n,(j,k)}}{n} \to 0,
\end{equation*}
almost surely as $n \to +\infty$.
\end{enumerate}
\end{theorem}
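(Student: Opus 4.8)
The plan is to reuse the truncation-and-coupling machinery of the bounded case, keeping only the \emph{upper} truncations. Fix a large integer $I$ and consider the chain $(\ovfcal,\qcal)$, where $\ovfcal$ maps every fitness $f_j$ with $j>I$ to $0$; this is a finite-type chain with $I$ positive fitnesses (plus a zero class). The coupling of Lemma~\ref{lem:couplingalone} is constructed so that only Conditions~1.--3.\ there (not the lower bounds involving $\undfcal$) are needed for the upper inequality, so we retain $\limsup_n M_{n,j}/n \le \ovnu_j^I$ almost surely for $1\le j\le I$, and likewise $\limsup_n T_{n,(j,k)}/n \le \ovtau_{(j,k)}^I$ from Lemma~\ref{lem:couplingfull}. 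The key point is that when $h=+\infty$ the Bose--Einstein-type equation~\eqref{eq:be1} has, for each finite $I$, a solution $\ovlambda_0^I$ with $\ovlambda_0^I > \sup_{j\le I} f_j$, and this solution escapes to $+\infty$: indeed $\ovss^I(\lambda)=\sum_{j\le I} f_j q_j/(\lambda-f_j)$, so for $\lambda$ fixed and $I\to+\infty$ we would need $\lambda$ large to keep $\ovss^I(\lambda)\le 1$; more precisely, since $\sum_{j\le I} f_j q_j \to +\infty$ (this is where unboundedness enters: $f_j q_j$ need not be summable, or even if it is, the argument below still works), one checks $\ovlambda_0^I\to+\infty$ as $I\to+\infty$.

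With $\ovlambda_0^I\to+\infty$ in hand, pass to the limit in the closed-form limits from Proposition~\ref{prop:finitetype}. For the degree totals, $\ovnu_j^I = \ovlambda_0^I\, q_j/(\ovlambda_0^I - f_j) \to q_j$ as $I\to+\infty$, since $f_j/\ovlambda_0^I\to 0$ for each fixed $j$. For the degree tails, recall $\oveta_{(j,k)}^I = \ovnu_j^I\cdot \tfrac1k\prod_{\ell=2}^k \ell/(\ell+\ovlambda_0^I f_j^{-1})$; for fixed $k>1$ every factor $\ell/(\ell+\ovlambda_0^I f_j^{-1})\to 0$, hence $\oveta_{(j,k)}^I\to 0$, and then $\ovtau_{(j,k)}^I=\sum_{k'\ge k} k'\oveta_{(j,k')}^I\to 0$ as well, using that the partial sums are bounded by $2$ uniformly (as noted before Lemma~\ref{lem:couplingfull}) so dominated convergence applies to the sum over $k'$. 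Combining with the coupling bounds gives $\limsup_n M_{n,j}/n \le q_j$ and $\limsup_n T_{n,(j,k)}/n \le 0$ almost surely, for every $j$ and every $k>1$.

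It remains to establish the matching lower bounds. Since $T_{n,(j,k)}\ge 0$ trivially, part~2.\ follows. For part~1., note that $\sum_{j\ge1} M_{n,j}/n = 2$ for all $n$ (each edge contributes two endpoints), while $M_{n,j}/n \ge M_{n,(j,1)}/n$ and more usefully $M_{n,j}/n = N_{n,(j,1)}/n + T_{n,(j,2)}/n$, so $\liminf_n M_{n,j}/n \ge \liminf_n N_{n,(j,1)}/n$. A vertex of fitness $f_j$ and degree $1$ is simply a newly arrived vertex of fitness $f_j$ that has not yet been chosen; a direct second-moment or martingale argument on the number of such "leaves" shows $N_{n,(j,1)}/n \to q_j$ (each new vertex has fitness $f_j$ with probability $q_j$, and the chance it is ever hit is negligible in the limit because the pull toward the ever-growing high-fitness vertices makes the hitting probability of any fixed low-fitness leaf summable). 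Equivalently, one can avoid this by a Fatou/normalization argument: since $\sum_j \limsup_n M_{n,j}/n \le \sum_j q_j = 1 < 2$ is \emph{false} as a bound on the total—rather the "missing" mass $2-\sum_j q_j=1$ escapes to infinity, exactly the unbounded analogue of the innovation-pays-off phase—the per-coordinate upper bound $q_j$ together with the leaf lower bound $N_{n,(j,1)}/n\to q_j$ pins down $M_{n,j}/n\to q_j$. The main obstacle is this last lower-bound step: the upper-truncation coupling is one-sided, so unlike the bounded case we cannot extract a lower bound from a lower-truncated chain, and we must argue directly that low-fitness vertices are asymptotically never revisited, i.e.\ that essentially all their degree mass stays at degree $1$. $\blacksquare$
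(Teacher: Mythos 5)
Your upper-bound half follows the paper's route: keep only the upper $I$-truncation, use the one-sided parts of Lemmas~\ref{lem:couplingalone} and~\ref{lem:couplingfull}, and let $I\to+\infty$ using $\ovlambda^I_0>\sup_{j\le I}f_j\to+\infty$ (this last observation, which you do state, is all that is needed; the digression about $\sum_{j\le I}f_jq_j\to+\infty$ is both unnecessary and not true in general, since $\sum_j f_jq_j$ may converge). One step there is not justified as written: from $\oveta^I_{(j,k')}\to 0$ for each fixed $k'\ge 2$ you conclude $\ovtau^I_{(j,k)}=\sum_{k'\ge k}k'\oveta^I_{(j,k')}\to 0$ ``by dominated convergence'' because the partial sums are uniformly bounded by $2$. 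A uniform bound on the total plus termwise convergence does not rule out mass escaping to large $k'$; you need an actual summable dominating sequence (obtainable, e.g., from the monotonicity of $\ovlambda^I_0$ in $I$), or better, avoid the infinite sum altogether as the paper does by writing $\ovtau^I_{(j,k)}=\ovnu^I_j-\sum_{l\le k-1}l\,\oveta^I_{(j,l)}$, a finite sum whose terms converge ($\ovnu^I_j\to q_j$, $\oveta^I_{(j,1)}\to q_j$, $\oveta^I_{(j,l)}\to 0$ for $l\ge2$).

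The genuine gap is the lower bound in part 1, which you yourself flag as ``the main obstacle'' and leave unproved. The route you sketch---showing $N_{n,(j,1)}/n\to q_j$ via a second-moment or martingale argument that low-fitness vertices are essentially never revisited---is not carried out, and it is harder than necessary: that statement is essentially a consequence of the theorem (parts 1 and 2 give $\sum_k(k-1)N_{n,(j,k)}=o(n)$), so using it as the input is close to circular, and the ``Fatou/normalization'' alternative you mention cannot close the gap for exactly the reason you note, namely that the missing mass $2-\sum_jq_j$ escapes to ever higher fitnesses. The paper's argument is a one-liner that needs no revisitation estimate at all: every vertex has degree at least $1$, so $M_{n,j}$ dominates the number of fitness-$f_j$ vertices present at time $n$; since fitnesses are drawn i.i.d.\ from $\qcal$, that count is a sum of i.i.d.\ Bernoulli$(q_j)$ indicators (formally, couple with the trivial chain $X_n=X_{n-1}+e_{F_n}$ so that $M_{n,j}\ge X_{n,j}$), and the strong law gives $\liminf_n M_{n,j}/n\ge q_j$ almost surely, which combined with your upper bound finishes part 1. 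With that substitution, and the repaired tail argument above, your proof is sound and coincides with the paper's.
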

\begin{proof}
Fix $1 \leq j < +\infty$ and $k > 1$. We use the upper bounds in
the coupling Lemmas~\ref{lem:couplingalone}
and~\ref{lem:couplingfull}. We use the notations of
Theorems~\ref{thm:fitgetrich} and~\ref{thm:boseeinstein}. We have
that $h^I \to +\infty$ and therefore $\ovlambda^I_0 > h^I \to
+\infty$. Therefore, plugging into the equations for $\ovnu^I_j$
and $\ovtau^I_{(j,k)} = \ovnu^I_j - \sum_{l\leq k}
l\oveta^I_{(j,l)}$, we get
\begin{equation*}
\limsup_{n\to\infty}\frac{M_{n,j}}{n} \leq q_j,
\end{equation*}
and
\begin{equation*}
\limsup_{n\to \infty}\frac{T_{n,(j,k)}}{n} \leq 0,
\end{equation*}
almost surely. We get 2.~immediately. To get 1., consider the
following chain $\{X_{n,i}\}_{n,i \geq 0}$. Pick a fitness say
$F_0$ according to $\qcal$ and let $X_0 = e_{F_0}$. Then at each
time step, pick a fitness $F_n$ according to $\qcal$ and set $X_n
= X_{n-1} + e_{F_n}$. This chain can clearly be coupled with the
$(\fcal,\qcal)$-chain in such a way that $M_{n} \geq X_n$ for all
$n$. Now it is easy to see that $X_{n,j} \to q_j$ as $n\to
+\infty$, and therefore
\begin{equation*}
\liminf_{n\to \infty} \frac{M_{n,j}}{n} \geq q_j.
\end{equation*}
This concludes the proof.
\end{proof}

\section{Analysis of Continuous Fitness Distributions} \label{sec:app:continuous}


In this section, we analyze the preferential attachment scheme
under continuous fitness
distributions. Let $h < +\infty$---the unbounded case is
treated in Appendix \ref{sec:app:continuous_unbounded}---and
let $g:[0,h] \to \real_+$ be a continuous density function. 
Consider the preferential attachment process 
with $\fcal = [0,h]$ and $\qcal$
the distribution defined by $g$. The dynamical behavior parallels
the one observed in the discrete case, namely
\begin{enumerate}
\item the fit-get-richer scenario taking place when $ \int_{0}^h
\frac{x g(x)}{h - x}\diff x \geq 1,$

\item the innovation-pays-off scenario taking place when $\int_{0}^h
\frac{x g(x)}{h - x}\diff x < 1.$
\end{enumerate} 
The analysis requires a more sophisticated coupling 
argument than that for the discrete case described
in Section~\ref{sec:countable}.

\subsection{Coupling}

We discretize the $(\fcal, \qcal)$-chain in the following way,
which lets us bound the relevant quantities from below only. It
will turn out that the lower bound is sufficient for our purposes.
Fix $1 < I < +\infty$, an integer with $\eps = h\frac{1}{I}$. For
$1 \leq i \leq I$, let
\begin{equation*}
\ovf_i = h\frac{i}{I},
\end{equation*}
\begin{equation*}
\undf_i = h\frac{i-1}{I},
\end{equation*}
and
\begin{equation*}
\tildeq_i = \int_{\undf_i}^{\ovf_i} g(x)\diff x.
\end{equation*}
Denote $\tildeqcal$ the distribution over $\{1,2,\ldots,I\}$
defined by $\{\tildeq_i\}_{i=1}^I$. For reasons that will be clear
in Section~\ref{sec:becont}, we allow $\int_0^h g(x)\diff x < 1$.
Consider the following finite balls-in-bins process with $q = I+1$
bins. The activities are
\begin{equation*}
a_{i} = \left\{
\begin{array}{ll}
\ovf_i, & \mathrm{if\ } i\leq I,\\
h, & \mathrm{if\ }i = I+1.
\end{array}
\right.
\end{equation*}
For the initial load, let $1 \leq i^*\leq I$ be picked according
to $\tildeqcal$ and pose
\begin{equation*}
X_{0,i} = \left\{
\begin{array}{ll}
2, & \mathrm{if\ }i = i^*,\\
0, & \mathrm{o.w.}
\end{array}
\right.
\end{equation*}
The update vectors are defined as follows for $1 \leq i \leq I$:
pick $i^*$ according to $\tildeqcal$ (set $i^* = +\infty$ with
probability $1 - G$ where $G = \int_0^h g(x) \diff x$), let
$\gamma_i = 1$ with probability $\undf_i/\ovf_i$ and $0$ o.w., and
set
\begin{equation*}
\xi_{i,i'} = \gamma_i \ind_{\{i' = i\}} + \left\{
\begin{array}{ll}
1, & \rmif i'=i^*,\\
1, & \rmif i'=I+1, \gamma_i = 0,\\
0, & \rmow
\end{array}
\right.
\end{equation*}
and
\begin{equation*}
\xi_{I+1,i'} = \left\{
\begin{array}{ll}
1, & \rmif i'=i^*,\\
1, & \rmif i'=I+1,\\
0, & \rmow
\end{array}
\right.
\end{equation*}
Because this chain is not exactly of the type described in
Section~\ref{sec:finitetype}, we cannot appeal directly to
Proposition~\ref{prop:alone}. Therefore, we give a separate
analysis here. Let $\tildelambda_0 > h - \eps$ be a solution to
\begin{equation}\label{eq:bediscr}
\sum_{j=1}^{I} \frac{\ovf_j \tildeq_j}{\tildelambda_0 - \undf_j} +
h \frac{(1+G)\tildelambda_0^{-1}\eps} {\tildelambda_0 - (h -
\eps)}= 1.
\end{equation}
By monotonicity, it is clear that there is a unique such solution.
For $1 \leq j \leq I$, let
\begin{equation*}
\tildenu_{j} = \tildelambda_0 \frac{\tildeq_j}{\tildelambda_0 -
\undf_j},
\end{equation*}
and
\begin{equation*}
\tildenu_{I+1} = \tildelambda_0
\frac{(1+G)\tildelambda_0^{-1}\eps}{\tildelambda_0 - (h - \eps)}.
\end{equation*}
Note that
\begin{eqnarray*}
\left(1+\frac{\eps}{\lambda_0}\right)\sum_{j=1}^{I+1} \tildenu_j
&=& \sum_{j=1}^{I} \tildeq_j + (1+G)\eps + \sum_{j=1}^{I}
\frac{\ovf_j \tildeq_j}{\tildelambda_0 - \undf_j} +
h \frac{(1+G)\tildelambda_0^{-1}\eps}{\tildelambda_0 - (h - \eps)}\nonumber\\
&=& G + (1+G)\frac{\eps}{\lambda_0} + 1\nonumber\\
&=& (1+G)\left(1+\frac{\eps}{\lambda_0}\right),
\end{eqnarray*}
so that
\begin{equation}\label{eq:sum1g}
\sum_{j=1}^{I+1} \tildenu_j = 1+G.
\end{equation}
We prove the following.
\begin{lemma}[Discretization]\label{lemma:discretization}
For all $1 \leq j\leq I+1$,
\begin{equation*}
\frac{X_{n,j}}{n} \to \tildenu_j,
\end{equation*}
almost surely as $n \to +\infty$.
\end{lemma}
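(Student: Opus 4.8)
The strategy is to verify that the balls-in-bins process defined just above fits the framework of Theorem~\ref{thm:basic} and then to identify its Perron--Frobenius data. The dynamics here differ slightly from the clean scheme of Section~\ref{sec:finitetype}: the update vector $\xi_i$ for $i \leq I$ may \emph{also} add a ball back to bin $i$ itself (the $\gamma_i$ term), and with probability $1-G$ the ``offspring'' index $i^*$ is sent to $+\infty$, i.e.\ no new ball is created for the fitness part. Nonetheless, the process is tenable (the number of balls never decreases, since the only negative-looking feature, the self-loop via $\gamma_i$, always adds balls), the update vectors are bounded, and the associated matrix $A$ will be seen to be irreducible in the generalized sense of Remark~4.2 of~\cite{Janson:04} once we compute it; so Theorem~\ref{thm:basic} applies and gives $X_n/n \to \lambda_1 v_1$ almost surely, with essential extinction ruled out exactly as in the other finite-type arguments.

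The bulk of the work is the eigenvalue/eigenvector computation. First I would write down the matrix $A$ with entries $A_{i,i'} = a_i \expec[\xi_{i,i'}]$. For $i \leq I$ we have $a_i = \ovf_i$, the self-term contributes $\expec[\gamma_i] = \undf_i/\ovf_i$ to $A_{i,i}$, the offspring term contributes $\tildeq_{i'}$ to $A_{i,i'}$ for $i' \leq I$ (and nothing when $i^* = +\infty$), and the ``overflow'' term contributes $\expec[1-\gamma_i] = 1 - \undf_i/\ovf_i = \eps\tildelambda_0^{-1}\cdot(\ovf_i)^{-1}\cdot(\text{something})$... more precisely $1-\undf_i/\ovf_i = (\ovf_i - \undf_i)/\ovf_i = \eps/\ovf_i$, so the entry to column $I+1$ is $a_i \cdot \eps/\ovf_i = \eps$. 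For $i = I+1$, $a_{I+1} = h$, and the update always adds one ball to $i^*$ (contributing $h\tildeq_{i'}$ to column $i' \leq I$, weighted by $G$ overall, i.e.\ $h\tildeq_{i'}$ since $\sum \tildeq_{i'} = G$) and one to bin $I+1$ (contributing $h$ to $A_{I+1,I+1}$). I would then guess the right eigenvector $u_1$ with $(u_1)_i = \ovf_i/(\tildelambda_0 - \undf_i)$ for $i \leq I$ and $(u_1)_{I+1} = h/(\tildelambda_0 - (h-\eps))$, mirroring the guess in the proof of Proposition~\ref{prop:finitetype}, and check directly that $Au_1 = \tildelambda_0 u_1$ using the defining equation~\eqref{eq:bediscr}: each row $i \leq I$ gives $\ovf_i[\undf_i/\ovf_i \cdot u_1)_i$-term$] + \ovf_i\sum_{i'}\tildeq_{i'}(u_1)_{i'} + \ovf_i \cdot (\eps/\ovf_i)(u_1)_{I+1}$, and the inner sum $\sum_{i' \le I}\tildeq_{i'}(u_1)_{i'} + \eps(u_1)_{I+1}$ is exactly the left side of~\eqref{eq:bediscr} times an appropriate factor, so it collapses to $\tildelambda_0(u_1)_i$; the $(I+1)$-row is checked the same way. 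Hence $\lambda_1 = \tildelambda_0$.

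For the left eigenvector $v_1$, I would solve the column equations of $A$. Column $i' \leq I$ reads $\sum_i A_{i,i'}(v_1)_i = \tildelambda_0 (v_1)_{i'}$, i.e.\ $\undf_{i'}(v_1)_{i'} + \tildeq_{i'}\bigl[\sum_{i \leq I}\ovf_i(v_1)_i + h(v_1)_{I+1}\bigr] = \tildelambda_0(v_1)_{i'}$, which using the normalization $a\cdot v_1 = 1$ (i.e.\ $\sum_{i\le I}\ovf_i(v_1)_i + h(v_1)_{I+1} = 1$) yields $(v_1)_{i'} = \tildeq_{i'}/(\tildelambda_0 - \undf_{i'})$; similarly column $I+1$ gives $(v_1)_{I+1} = (1+G)\tildelambda_0^{-1}\eps/(\tildelambda_0 - (h-\eps))$ after using that the overflow inflow is $\sum_{i\le I}\eps(v_1)_i + h(v_1)_{I+1}$ and simplifying. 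One checks $a\cdot v_1 = 1$ is then equivalent to~\eqref{eq:bediscr}, confirming consistency. Comparing with the definitions of $\tildenu_j$, we get $(v_1)_j = \tildelambda_0^{-1}\tildenu_j$ for all $1 \le j \le I+1$, so Theorem~\ref{thm:basic} gives $X_{n,j}/n \to \lambda_1(v_1)_j = \tildenu_j$ almost surely. The main obstacle is purely bookkeeping: getting the $\gamma_i$ self-loop term, the $(1-G)$ ``leakage,'' and the overflow bin all correctly reflected in the entries of $A$ so that the ansatz for $u_1$ satisfies exactly equation~\eqref{eq:bediscr} and the left-eigenvector normalization closes up; once the matrix is right, the verification is mechanical.
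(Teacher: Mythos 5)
Your setup is fine and matches the paper's: the process is tenable with bounded update vectors, and the matrix is $A_{i,i'}=\ovf_i\tildeq_{i'}$ for $i'\le I$ (plus $\undf_i$ added on the diagonal), $A_{i,I+1}=\eps$ for $i\le I$, and $A_{I+1,i'}=h\tildeq_{i'}$, $A_{I+1,I+1}=h$. But the verification you sketch does not close. Your right-eigenvector ansatz $(u_1)_i=\ovf_i/(\tildelambda_0-\undf_i)$, $(u_1)_{I+1}=h/(\tildelambda_0-(h-\eps))$ is \emph{not} an eigenvector of this $A$: row $i\le I$ gives $\undf_i(u_1)_i+\ovf_i S+\eps(u_1)_{I+1}$ with $S=\sum_{i'\le I}\tildeq_{i'}(u_1)_{i'}$, and since $\eps(u_1)_{I+1}$ is not multiplied by $\ovf_i$, the quantity that would have to equal $1$ is $S+(\eps/\ovf_i)(u_1)_{I+1}$, which depends on $i$; moreover it is not the left-hand side of \eqref{eq:bediscr} even for a single $i$, because the last term of \eqref{eq:bediscr} carries the extra factor $(1+G)\tildelambda_0^{-1}$. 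The actual right eigenvector solves $u_{I+1}=hS/(\lambda_1-h)$ and $u_i=(\ovf_i S+\eps u_{I+1})/(\lambda_1-\undf_i)$, i.e.\ $(u_1)_{I+1}\propto h/(\tildelambda_0-h)$ (note $\tildelambda_0>h$ here) and $(u_1)_i\propto\bigl(\ovf_i+\eps h/(\tildelambda_0-h)\bigr)/(\tildelambda_0-\undf_i)$; the Proposition~\ref{prop:finitetype} pattern does not carry over because the overflow bin has a genuine self-loop of weight $h$ and feeds back through the $\eps$-column. So identifying $\lambda_1=\tildelambda_0$ via your $u_1$ would fail when checked.

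There is a related gap in your left-eigenvector step. The column-$(I+1)$ equation only gives $(v_1)_{I+1}=\eps\sum_{i\le I}(v_1)_i/(\lambda_1-h)$; to convert this into $(1+G)\lambda_1^{-1}\eps/(\lambda_1-(h-\eps))$ --- and hence to make $a\cdot v_1=1$ reduce to \eqref{eq:bediscr} and give $(v_1)_j=\tildelambda_0^{-1}\tildenu_j$ --- you need the additional identity $\lambda_1\sum_j(v_1)_j=1+G$. This is exactly the paper's equation \eqref{eq:sum1g2}, which it obtains from Theorem~\ref{thm:basic} and the law of large numbers (every step adds $1+G$ balls in expectation, whichever bin is drawn); equivalently you can derive it by summing all column equations and using $\sum_{i'}\expec[\xi_{i,i'}]=1+G$ for each $i$. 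Your phrase ``and simplifying'' hides precisely this step, and without it the claimed equivalence with \eqref{eq:bediscr} does not follow. Once you add that identity --- and drop the $u_1$ computation, which is not needed since Theorem~\ref{thm:basic} only uses $\lambda_1 v_1$ (this is in fact the paper's route: left eigenvector, normalization $a\cdot v_1=1$, plus \eqref{eq:sum1g2}) --- the argument closes and yields the lemma.
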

\begin{proof}
The matrix $A$ has the following form: for $1 \leq i \leq I$, $1
\leq j \leq I + 1$,
\begin{equation*}
A_{ij} = \ovf_i\left(\tildeq_j\ind_{\{j\leq I\}} +
\frac{\undf_i}{\ovf_i}\onebf_{\{j=i\}} +
\frac{\eps}{\ovf_i}\onebf_{\{j = I+1\}}\right),
\end{equation*}
and for $i=I+1$
\begin{equation*}
A_{I+1,j} = h\left(\tildeq_j\ind_{\{j\leq I\}} + \onebf_{\{j =
I+1\}}\right).
\end{equation*}

We compute the corresponding $\lambda_1, v_1$. Note that by
Theorem~\ref{thm:basic} and the law of large numbers, it is clear
that
\begin{equation}\label{eq:sum1g2}
\sum_{i=1}^{I+1} \lambda_1 (v_1)_i = 1+G.
\end{equation}
For all $1\leq j\leq I$, $v_1$ must satisfy
\begin{equation*}
\tildeq_j \sum_{i=1}^{q} a_i (v_1)_i +
\ovf_j\frac{\undf_j}{\ovf_j} (v_1)_j = \lambda_1 (v_1)_j.
\end{equation*}
By the convention
\begin{equation}\label{eq:convcont}
\sum_{i=1}^q a_i (v_1)_i = 1,
\end{equation}
it follows that for all $1\leq j\leq I$
\begin{equation*}
(v_1)_j = \frac{\tildeq_j}{\lambda_1 - \undf_j}.
\end{equation*}
Also for $i = I+1$, we must have
\begin{equation*}
\sum_{i=1}^{I} \ovf_i \frac{\eps}{\ovf_i} (v_1)_i + h (v_1)_{I+1}
= \eps \left((1+G)\lambda_1^{-1} - (v_1)_{I+1}\right) + h
(v_1)_{I+1} = \lambda_1 (v_1)_{I+1},
\end{equation*}
where we have used (\ref{eq:sum1g2}). Therefore,
\begin{equation*}
(v_1)_{I+1} = \frac{(1+G)\lambda_1^{-1}\eps} {\lambda_1 - (h -
\eps)}.
\end{equation*}
Plugging back into (\ref{eq:convcont}), we get
\begin{equation*}
\sum_{j=1}^{I} \frac{\ovf_j \tildeq_j}{\lambda_1 - \undf_j} + h
\frac{(1+G)\lambda_1^{-1}\eps} {\lambda_1 - (h - \eps)}= 1.
\end{equation*}
Therefore, $\lambda_1 = \tildelambda_0$ and $(v_1)_j =
(\lambda_1)^{-1} \tildenu_j$ for all $1\leq j\leq q$. The result
follows by Theorem~\ref{thm:basic}.
\end{proof}

Consider again the $(\fcal,\qcal)$-chain. For $n \geq 0$ and
$1\leq j \leq I$, let $M_{n,j}$ be the number of edges with an
endpoint of fitness in $(\undf_j,\ovf_j)$ (counting twice edges
with two endpoints of fitness in $(\undf_j,\ovf_j)$). Then we have
the following.
\begin{lemma}[Coupling: Continuous Case]\label{lem:couplingalonecont}
For all $1 \leq j \leq I$, it holds that
\begin{equation*}
\liminf_{n \to +\infty} \frac{M_{n,j}}{n} \geq \tildenu_j,
\end{equation*}
almost surely.
\end{lemma}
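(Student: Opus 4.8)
The plan is to couple the $(\fcal,\qcal)$-chain with the finite balls-in-bins process of Lemma~\ref{lemma:discretization} so that, at every time step $n$, the number of balls $X_{n,j}$ in bin $j\le I$ is dominated by $M_{n,j}$, the number of edge endpoints with fitness in $(\undf_j,\ovf_j)$ in the real chain. The conclusion then follows immediately from Lemma~\ref{lemma:discretization}, since $n^{-1}X_{n,j}\to\tildenu_j$ almost surely forces $\liminf_n n^{-1}M_{n,j}\ge\tildenu_j$. The technical work is entirely in constructing and maintaining this coupling.

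First I would set up the invariants to be preserved along the coupled evolution. The key one is: for every $n$ and every $1\le j\le I$, $X_{n,j}\le M_{n,j}$, together with the ``leftover'' bookkeeping relation $X_{n,I+1}\ge$ (total degree of vertices with fitness in $[\undf_j,\ovf_j)$ for $j>I$ or, more precisely, the mass not yet tracked), and a corresponding domination of the selection probabilities. Concretely, in the discretized urn, when bin $i\le I$ is picked with activity $\ovf_i$, the probability mass on bin $i$ is proportional to $\ovf_i X_{n-1,i}$, whereas in the real chain a vertex with fitness $f\in(\undf_i,\ovf_i)$ is picked with probability proportional to $f\,d_v$, and $f\ge\undf_i$; the ``thinning'' factor $\undf_i/\ovf_i=\gamma_i$ built into the update vector $\xi_{i,i'}=\gamma_i\ind_{\{i'=i\}}+\cdots$ is exactly what compensates for the gap between the activity $\ovf_i$ used in the urn and the true fitness $f\in[\undf_i,\ovf_i)$. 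So the coupling should, at each step, (i)~draw the fitness of the new vertex according to $\qcal$ and route it to the appropriate bin $i^*$ (with $i^*=+\infty$, i.e.\ the event of probability $1-G$, never occurring in the true chain since $g$ integrates to $1$ there — one keeps $G<1$ only to allow the discretization to be applied to \emph{subdensities} later), (ii)~decide which old vertex is attached to, coupled so that the bin index is at least as large in the urn sense, and (iii)~use the Bernoulli variable $\gamma_i$ to decide whether the picked ball's color survives in bin $i$ or migrates to the overflow bin $I+1$, mirroring whether the true fitness $f$ of the attached vertex is counted toward $(\undf_i,\ovf_i)$ or not. One then checks that each of the three update cases of $\xi$ is realizable under the coupling and that the invariant $X_{n,j}\le M_{n,j}$ is inductively preserved: the $+1$ contributions to $M_{n,j}$ from the new vertex landing in $(\undf_j,\ovf_j)$ and from the attachment endpoint are matched by the corresponding $+1$'s in $\xi$, while the $-\gamma_i$/overflow term only ever removes mass from bin $j$ when the true process does not gain in $M_{n,j}$.

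The main obstacle — and the reason a ``separate analysis'' was flagged before Lemma~\ref{lemma:discretization} — is that the urn's selection probabilities are governed by $\ovf_i X_{n-1,i}$ with the \emph{upper} endpoint $\ovf_i$ as activity, while the induction hypothesis only gives $X_{n-1,i}\le M_{n-1,i}$; since $\ovf_i>f$ for the actual vertices, a naive comparison goes the wrong way. The resolution is that one does not need the urn to be a \emph{lower} bound on the selection probability directly; rather, one couples so that whenever the urn picks bin $i$, the real chain picks a vertex of fitness in $[\undf_i,\ovf_i)$ \emph{or lower}, and the $\gamma_i$ thinning absorbs precisely the discrepancy $\ovf_i$ vs.\ actual $f$, in expectation contributing $\undf_i$ worth of ``real'' weight — matching the $A$-matrix entry $\ovf_i\cdot(\undf_i/\ovf_i)=\undf_i$ that appears on the diagonal. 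Making this rigorous amounts to exhibiting an explicit transport plan between the real-chain attachment distribution $\{f_v d_v\}$ and the urn's $\{\ovf_i X_{n-1,i}\}$ that respects the block structure $(\undf_i,\ovf_i)$, which is where one mimics the bullet-point construction in the proof of Lemma~\ref{lem:couplingalone}, now with an extra layer for the overflow bin $I+1$ handling all fitness above $\ovf_I=h$ effectively, as well as the thinned mass. I expect the verification that conditions (A1)--(A6) (equivalently the Remark~4.2 conditions) hold for this urn, and that essential extinction is impossible, to be routine given boundedness of the $\xi_i$'s and positivity of $G$ and the $\tildeq_j$'s.
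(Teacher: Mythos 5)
Your plan is essentially the paper's own proof: couple the $(\fcal,\qcal)$-chain step by step with the urn of Lemma~\ref{lemma:discretization} so that $X_{n,j}\le M_{n,j}$ for all $j\le I$, using the Bernoulli($\undf_i/\ovf_i$) rejection variable $\gamma_i$ to absorb the mismatch between the urn activity $\ovf_i$ and the true fitness (so the accepted attachment probabilities are dominated by the real ones), routing the rejected/residual mass through the overflow bin $I+1$ exactly as in the bullet-point construction of Lemma~\ref{lem:couplingalone}, and then concluding from Lemma~\ref{lemma:discretization}. One cosmetic correction: in this urn nothing is ever removed from a bin — when $\gamma_i=0$ the ball simply goes to bin $I+1$ instead of bin $i$ — but this does not affect your argument, which matches the paper's.
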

\begin{proof}
This proof is similar to the proof of
Lemma~\ref{lem:couplingalone}. Consider the $(\fcal,
\qcal)$-chain. At step $n \geq 1$, we first pick a vertex
according to weighted preferential attachment. Let $F_n$ be the
fitness of the chosen vertex, and denote $\rho_{n-1,i}$ the
probability that $F_n \in (\undf_i,\ovf_i)$ given the state after
time $n-1$. Secondly, we add a new vertex with fitness according
to $\qcal$. Let $F_n'$ be the fitness of this new vertex.
Similarly for the discretized chain, we first pick a bin $i$ by
weighted preferential attachment and then an $i^*$ according to
$\tildeqcal$. We also pick $\gamma_i$ a
Bernoulli($\undf_i/\ovf_i$). We let
\begin{equation*}
\undff_n = \left\{
\begin{array}{ll}
\ovf_{i}, & \rmif \gamma_i = 1,\\
h, & \rmif \gamma_i = 0.
\end{array}
\right. \text{~~and~~}\undff_n' = \left\{
\begin{array}{ll}
\ovf_{i^*}, & \rmif i^* \leq I,\\
+\infty, & \rmif i^* = +\infty,
\end{array}
\right.
\end{equation*}
We denote $\undrho_{n-1,i}$ the probability that $\undff_n =
\ovf_i$ and $\gamma_i = 1$ given the state after time $n-1$
\footnote{ The specification that $\gamma_i = 1$ is relevant only
in the case $i = I$.}. We couple the two chains so as to preserve
the following conditions:
\begin{enumerate}
\item For all $n \geq 1$,
\begin{equation*}
F_n \leq \undff_n,
\end{equation*}
and
\begin{equation*}
F_n' \leq \undff_n'.
\end{equation*}

\item For all $n\geq 1$ and all $1\leq i\leq I$,
\begin{equation*}
\undmm_{n,i} \leq M_{n,i}.
\end{equation*}

\item For all $n\geq 1$ and all $1\leq i\leq I$,
\begin{equation*}
\undrho_{n,i} \leq \rho_{n,i}.
\end{equation*}
\end{enumerate}

{
Note that 3.~follows easily from 1., 2.~and the definition of
$\gamma_i$. In fact, the reason for using the ``rejection'' 
variable $\gamma_i$ is to keep $\undrho_{n,i}$ small by making its 
numerator small---with a contribution of only $\undf_i$---while 
preserving a large denominator.} 
Here is how our coupling works. In the initial
configuration, the $(\fcal,\qcal)$-chain has one vertex with a
self-loop and fitness $F_0=f_i$, where $f_i$ is picked according
to $\qcal$; the discretized chain can be coupled so that two balls
are added to a bin with activity $\undff_0 = \ovf_i$ with
probability $\undf_i/\ovf_i$ and $\undff_0=h$ with probability
$1-\undf_i/\ovf_i$. Therefore the conditions are satisfied at time
$0$ by construction. Assume Conditions 1., 2., and 3. are
satisfied at time $n-1$; we will show then that they are also
satisfied at time $n$. First, consider picking fitness for the new
vertex. In the $(\fcal,\qcal)$-chain, $F'_n = f_i$, where $f_i$ is
picked according to $\qcal$; the choice of the discretized chain
can be coupled so that $\undff'_n = \ovf_i$. Therefore, $ F_n'
\leq \undff_n'. $ Now consider the step of choosing an old vertex.
By 3., it is clear how to choose the $F$'s so as to satisfy 1.~and
2. Indeed, proceed as follows:
\begin{itemize}
\item With probability $\sum_{i=1}^I \undrho_{n-1,i}$, pick a bin
according to $\{{\undrho_{{n-1},i}}\}_{i=1}^I$ in the discretized
chain, say $i$, and pick a fitness according to weighted
preferential attachment restricted to $(\undf_i, \ovf_i)$ for the
$(\fcal,\qcal)$-chain (the interval $(\undf_i, \ovf_i)$ is
nonempty by 2.); \item With remaining probability, pick bin $I+1$
for the discretized chain, pick an interval according to
$\{(\rho_{{n-1},i} - \undrho_{{n-1},i})\}_{i=1}^I$, say $(\undf_i,
\ovf_i)$, and pick a fitness according to weighted preferential
attachment restricted to $(\undf_i, \ovf_i)$ for the
$(\fcal,\qcal)$-chain.
\end{itemize}
This concludes the proof.
\end{proof}

\subsection{Fit-Get-Richer Phase}

Assume the density $g$ is defined on $[0,h]$ with $h < +\infty$
and assume further that $g(x) > 0$ for all $x \in (0,h)$ (we allow
$0$ at the endpoints). In this section, we consider the case
\begin{equation}\label{eq:be2cont}
\int_{0}^h \frac{x g(x)}{h - x}\diff x \geq 1.
\end{equation}
The remaining cases are treated in the following two subsections.
\begin{example}\label{ex:fitgetrichcont}
An important special case of (\ref{eq:be2cont}) is when $g(h) >
0$. Indeed, take any $\delta > 0$ small and let $\delta' =
\inf_{x\in [h-\delta,h]} g(x)$. Note that $\delta' > 0$ by
assumption. Then,
\begin{eqnarray*}
\int_{0}^h \frac{x g(x)}{h - x}\diff x
&\geq& \int_{h-\delta}^h \frac{x g(x)}{h - x}\diff x\\
&\geq& (h - \delta)\delta' \int_{h-\delta}^h \frac{1}{h - x}\diff x\\
&\geq& (h - \delta)\delta' \int_{0}^{\delta} \frac{1}{y}\diff y\\
&=& +\infty\\
&\geq& 1.
\end{eqnarray*}
This example will turn out to be useful in
Section~\ref{sec:becont}.
\end{example}

By (\ref{eq:be2cont}) and monotonicity, there exists a solution
$\lambda_0 \geq h$ to
\begin{equation}\label{eq:be1cont}
\int_{0}^h \frac{x g(x)}{\lambda_0 - x}\diff x = 1.
\end{equation}
For $0 \leq a < b \leq h$, let
\begin{equation*}
\nu_{[a,b]} = \lambda_0 \int_{a}^b \frac{g(x)}{\lambda_0 - x}
\diff x.
\end{equation*}
Note in particular that
\begin{equation}\label{eq:sum1g3}
\nu_{[0,h]} = \int_{0}^h (\lambda_0 - x) \frac{g(x)}{\lambda_0 -
x} \diff x + \int_{0}^h \frac{x g(x)}{\lambda_0 - x} \diff x = 1 +
G,
\end{equation}
as one would expect (but see Section~\ref{sec:becont} below).
Also, for $n \geq 0$, let $M_{n,[a,b]}$ be the number of edges
with an endpoint of fitness in $[a,b]$ (counting twice edges with
two endpoints of fitness in $[a,b]$).

We prove the following.
\begin{theorem}[Continuous Case: Fit-Get-Richer Phase]\label{thm:fitgetrichcont}
Assume $g$ is defined on $[0,h]$ with $h < +\infty$ and assume
further that $g(x) > 0$ for all $x \in (0,h)$ and
\begin{equation*}
\int_{0}^h \frac{x g(x)}{h - x}\diff x \geq 1.
\end{equation*}
Then it holds that for all $0 \leq a < b \leq h$,
\begin{equation*}
\frac{M_{n,[a,b]}}{n} \to \nu_{[a,b]},
\end{equation*}
almost surely as $n \to +\infty$.
\end{theorem}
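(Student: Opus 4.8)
The plan is to reduce the continuous case to the discrete (finite-type) case via the discretization coupling of Lemma~\ref{lem:couplingalonecont}, exactly as Theorem~\ref{thm:fitgetrich} was reduced to Proposition~\ref{prop:alone}. The coupling already gives us a \emph{lower} bound: $\liminf_n M_{n,[a,b]}/n \geq \tildenu_{[a,b]}^I$ for an appropriate sum of the $\tildenu_j^I$'s over the discretization bins falling in $[a,b]$. The first step is to show that as the discretization parameter $I \to +\infty$ (so $\eps = h/I \to 0$), the discretized solution $\tildelambda_0^I$ of \eqref{eq:bediscr} converges to $\lambda_0$ of \eqref{eq:be1cont}. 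This is a Riemann-sum argument: the sum $\sum_{j=1}^I \frac{\ovf_j \tildeq_j}{\lambda - \undf_j}$ converges to $\int_0^h \frac{x g(x)}{\lambda - x}\diff x$ uniformly for $\lambda$ bounded away from $h$ (the integrand is bounded and the mesh shrinks), and the extra ``$I+1$'' term $h\frac{(1+G)\lambda^{-1}\eps}{\lambda-(h-\eps)}$ is $O(\eps \log(1/\eps))$-ish but in any case vanishes as $\eps \to 0$ for $\lambda$ fixed $> h$; so by the same continuity-and-monotonicity sandwich used in Theorem~\ref{thm:fitgetrich}, $\tildelambda_0^I \to \lambda_0$. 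Consequently $\tildenu_j^I = \tildelambda_0^I \frac{\tildeq_j^I}{\tildelambda_0^I - \undf_j^I}$ summed over bins in a subinterval converges to $\lambda_0 \int_{[a,b]} \frac{g(x)}{\lambda_0 - x}\diff x = \nu_{[a,b]}$, again by a Riemann-sum estimate. This yields $\liminf_n M_{n,[a,b]}/n \geq \nu_{[a,b]}$ for every $[a,b] \subseteq [0,h)$ (one has to be slightly careful at the right endpoint, handling $[a,b]$ with $b < h$ and then, if needed, $[a,h]$ by a limiting argument, but the statement as given restricts to closed subintervals; for $b$ close to $h$ we just take $I$ large so that the bin structure resolves $[a,b]$).

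The second step is to upgrade the $\liminf$ lower bounds to actual limits, using only that they must sum correctly. The key identity is \eqref{eq:sum1g3}: $\nu_{[0,h]} = 1 + G$, where $G = \int_0^h g(x)\diff x$. In the fit-get-richer regime $g$ integrates to $1$ (we only need the relaxation $G < 1$ in Section~\ref{sec:becont}), so $\nu_{[0,h]} = 2$, matching the fact that $M_{n,[0,h]} = 2n + O(1)$ exactly (each of the $n$ edges added has two endpoints, plus the initial self-loop). Now partition $[0,h]$ as $[0,a] \cup (a,b] \cup (b,h]$ (or finer): we have $\liminf_n M_{n,[0,a]}/n \geq \nu_{[0,a]}$, $\liminf_n M_{n,(a,b]}/n \geq \nu_{(a,b]}$, $\liminf_n M_{n,(b,h]}/n \geq \nu_{(b,h]}$, these three $\nu$'s sum to $2$, and the three $M$-counts sum to $M_{n,[0,h]} = 2n + O(1)$. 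A standard argument — if three nonnegative sequences each have $\liminf$ at least some value, and the three lower bounds sum to the limit of the sum, then each sequence in fact converges to its lower bound — forces $M_{n,[a,b]}/n \to \nu_{[a,b]}$ almost surely. (Single points have $\nu$-measure zero since $g$ is bounded, so open versus closed subintervals make no difference in the limit, reconciling $(a,b]$ with $[a,b]$.)

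The third, essentially bookkeeping, step is to note that this handles general $0 \leq a < b \leq h$ including $b = h$: apply the argument to $[0,a]$ and $[a,h]$, whose $\nu$'s sum to $2$. One should also double-check the degenerate cases $a = 0$ or the trivial $a = b$, but those are immediate. \textbf{The main obstacle} I anticipate is the first step — controlling the discretization error uniformly enough to conclude $\tildelambda_0^I \to \lambda_0$ and $\tildenu_j^I \to \nu$ — in the subcase where $g(h) > 0$, because then (as Example~\ref{ex:fitgetrichcont} shows) $\lambda_0 = h$ and the integrand $\frac{x g(x)}{\lambda - x}$ has a non-integrable-looking singularity as $\lambda \downarrow h$; one must verify that the Riemann sums still converge appropriately near the right endpoint and that the sandwich $\lambda_0 - \eps' \leq \tildelambda_0^I \leq \lambda_0 + \eps'$ can still be driven through. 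The resolution is that we only ever need $\liminf$ lower bounds and the exact total mass $2$, so even a crude bound $\tildelambda_0^I \to h$ (from $\tildelambda_0^I > h^I = h(I-1)/I \to h$ on one side, and $\tildelambda_0^I \leq$ something $\to h$ on the other, arguing as in the proof of Theorem~\ref{thm:boseeinstein}) suffices, after which the summing-to-$2$ trick closes the gap. Everything else is routine Riemann-sum estimation and the coupling already proved.
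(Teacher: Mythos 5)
Your overall route is the same as the paper's: use the discretization coupling (Lemma~\ref{lem:couplingalonecont}) to get $\liminf_n M_{n,[a,b]}/n$ bounded below by the discretized masses, show that the solution $\tildelambda_0^I$ of (\ref{eq:bediscr}) converges to the $\lambda_0$ of (\ref{eq:be1cont}), and then upgrade the $\liminf$ bounds to limits via the total-mass identity (\ref{eq:sum1g3}) (the paper phrases this as $M_{n,[0,h]}/n \to 1+G$ by the law of large numbers). That skeleton, including the partition/total-mass upgrade and the remark that single points carry no mass, is exactly what the paper does, and your Riemann-sum sandwich for $\lambda$ bounded away from $h$ is a legitimate way to prove $\tildelambda_0^I \to \lambda_0$ whenever $\lambda_0 > h$.

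The genuine problem is in your ``main obstacle'' paragraph, where you misread Example~\ref{ex:fitgetrichcont}: if $g(h)>0$, that example shows $\ical(h)=\int_0^h \frac{xg(x)}{h-x}\,\diff x=+\infty$, so the solution of (\ref{eq:be1cont}) satisfies $\lambda_0>h$ \emph{strictly}; it is not the case that $\lambda_0=h$. Your proposed fallback for that case --- settle for the crude bound $\tildelambda_0^I\to h$ and let the summing-to-$2$ trick close the gap --- would fail there: in fact $\tildelambda_0^I\to\lambda_0>h$, and if one nevertheless used $h$ in place of $\lambda_0$, the candidate limits $h\int_a^b \frac{g(x)}{h-x}\,\diff x$ would sum over $[0,h]$ to $G+\ical(h)>1+G$, so they cannot all be attained and no total-mass argument can reconcile them. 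The only situation in which $\lambda_0=h$ is the boundary case $\ical(h)=1$ (allowed since the hypothesis is $\geq 1$), and there your crude argument is indeed enough, because $\tildelambda_0^I>h-\eps$ holds by definition and only the upper half of the sandwich (evaluate the discretized equation at $h+\delta$, where $\ical(h+\delta)<1$) is needed, followed by a Fatou-type passage to the limit in the integrals. For comparison, the paper avoids any case split by a unified two-sided estimate: the discretized sum at $\lambda_0-\eps$ dominates $\ical(\lambda_0)=1$ term by term (numerators use $\ovf_j$, denominators use $\undf_j$), forcing $\tildelambda_0^I>\lambda_0-\eps$, while an $O(\sqrt{\eps})$ bound on the extra $(I{+}1)$-bin term shows the discretized equation evaluated at any fixed $\ovlambda_0>\lambda_0$ eventually drops below $1$, forcing $\tildelambda_0^I<\ovlambda_0$. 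If you correct the case identification (the delicate case is $\ical(h)=1$, not $g(h)>0$) and apply your fallback only there, your proof goes through and coincides with the paper's.
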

\begin{proof}
Note that the law of large numbers implies
\begin{equation*}
\frac{M_{n,[0,h]}}{n} \to 1+G,
\end{equation*}
almost surely as $n \to +\infty$, so that by (\ref{eq:sum1g3}) it
suffices to show that
\begin{equation*}
\liminf_{n \to +\infty} \frac{M_{n,[a,b]}}{n} \geq \nu_{[a,b]},
\end{equation*}
almost surely for all $0 \leq a < b \leq h$.

Let $1\leq I < +\infty$ and consider once again the discretization
of the $(\fcal,\qcal)$-chain. Let $\tildenu^I_j$ be as in
Lemma~\ref{lem:couplingalonecont} (we now indicate the dependence
on $I$ because we will need to take $I \to +\infty$). Similarly,
let $\tildelambda^I_0$ be as in (\ref{eq:bediscr}). Fix $0 \leq a
< b \leq h$. Let $\kcal^I$ be the largest subset of
$\{1,\ldots,I\}$ such that
\begin{equation*}
\bigcup_{i \in \kcal^I} (\undf_i^I, \ovf_i^I) \subseteq [a,b].
\end{equation*}
By the coupling lemma, we have
\begin{eqnarray*}
\liminf_{n \to +\infty} \frac{M_{n,[a,b]}}{n}
&\geq& \sum_{i\in \kcal^I} \tildenu_i^I \\
&=& \sum_{i \in \kcal^I} \frac{\tildeq_i^I}{\tildelambda_0^I - \undf_i^I}\\
&\geq& \sum_{i \in \kcal^I} \int_{\undf_i^I}^{\ovf_i^I} \frac{g(x)}{\tildelambda_0^I + \eps - x} \diff x\\
&\geq& \int_{a + \eps}^{b - \eps} \frac{g(x)}{\tildelambda_0^I +
\eps - x} \diff x.
\end{eqnarray*}
Since $\eps = 1/I$ goes to $0$ as $I \to +\infty$, it suffices to
prove
\begin{equation}\label{eq:lambda0cont}
\tildelambda^I_0 \to \lambda_0,
\end{equation}
as $I \to +\infty$.

We first show that $\tildelambda_0 > \lambda_0 - \eps$. Indeed,
assume $\tildelambda_0^I = \lambda_0 - \eps$. Then, the sum in
(\ref{eq:bediscr}) satisfies
\begin{eqnarray*}
\sum_{j=1}^{I} \frac{\ovf_j^I \tildeq_j^I}{\tildelambda_0^I -
\undf_j^I} + h \frac{(1+G)(\tildelambda_0^I)^{-1}\eps}
{\tildelambda_0^I - (h - \eps)}
&>& \sum_{j=1}^{I} \frac{\ovf_j^I \tildeq_j^I}{\tildelambda_0^I - \undf_j^I} \\
&\geq& \int_{0}^{h} \frac{x g(x)}{\lambda_0 - x}\\
&=& 1,
\end{eqnarray*}
which proves the claim, by monotonicity.

Take any $\ovlambda_0 > \lambda_0$. We show that eventually,
$\tildelambda_0^I < \ovlambda_0$. Let
\begin{equation*}
\ical(\lambda) = \int_{0}^h \frac{x g(x)}{\lambda - x}\diff x,
\end{equation*}
and note that $\ical(\ovlambda_0) < 1$. From (\ref{eq:sum1g3}), we
get
\begin{eqnarray*}
\sum_{j=1}^{I} \frac{\ovf_j^I \tildeq_j^I}{\ovlambda_0 -
\undf_j^I} &=& \eps\sum_{j=1}^{I} \frac{\tildeq_j^I}{\ovlambda_0 -
\undf_j^I}
+ \sum_{j=1}^{I} \frac{\undf_j^I \tildeq_j^I}{\ovlambda_0 - \undf_j^I}\\
&\leq& \eps \int_{0}^h \frac{g(x)}{\ovlambda_0 - x} + \int_{0}^h \frac{x g(x)}{\ovlambda_0 - x}\\
&\leq& \eps(1+G)(\lambda_0)^{-1} + \ical(\ovlambda_0).
\end{eqnarray*}
As for the other term in (\ref{eq:sum1g}), note that as soon as
\begin{equation*}
\ovlambda_0 \geq \lambda_0(1 + (1+G)(\lambda_0)^{-1}\sqrt{\eps})
\geq h(1 + (1+G)(\ovlambda_0)^{-1}\sqrt{\eps}) - \eps,
\end{equation*}
(the second inequality is always true), we have
\begin{eqnarray*}
h \frac{(1+G)(\ovlambda_0)^{-1}\eps} {\ovlambda_0 - (h - \eps)}
\leq \sqrt{\eps}.
\end{eqnarray*}
Therefore,
\begin{eqnarray*}
\sum_{j=1}^{I} \frac{\ovf_j^I \tildeq_j^I}{\ovlambda_0 -
\undf_j^I} + h \frac{(1+G)(\ovlambda_0)^{-1}\eps} {\ovlambda_0 -
(h - \eps)} \leq \sqrt{\eps} + \eps(1+G)(\lambda_0)^{-1} +
\ical(\ovlambda_0) < 1,
\end{eqnarray*}
for $I$ large enough, which proves the claim by (\ref{eq:sum1g})
and monotonicity. Furthermore, since $\ovlambda_0 > \lambda_0$ is
arbitrary, we have (\ref{eq:lambda0cont}). This concludes the
proof.
\end{proof}

\subsection{Innovation-Pays-Off Phase}\label{sec:becont}

Assume the density $g$ is defined on $[0,h]$ with $h < +\infty$
and assume further that $g(x) > 0$ for all $x \in (0,h)$ (we allow
$0$ at the endpoints). In this section, we consider the case
\begin{equation}\label{eq:be3cont}
\ical(h) := \int_{0}^h \frac{x g(x)}{h - x}\diff x < 1.
\end{equation}
We also assume
\begin{equation}\label{eq:intone}
\int_0^h g(x) \diff x = 1,
\end{equation}
although this is not necessary.
\begin{example}\label{example:beta}
Consider the case where $\qcal$ is Beta($\alpha$,$\beta$). Then it
is easy to show that
\begin{eqnarray*}
\int_{0}^1 \frac{x g(x)}{1 - x}\diff x &=& \frac{B(\alpha + 1,
\beta -1)}{B(\alpha, \beta)} = \frac{\alpha}{\beta - 1},
\end{eqnarray*}
where $B$ is the Beta function. Therefore, (\ref{eq:be3cont}) is
satisfied if $\beta > \alpha + 1$. This example is a
generalization of the example given in~\cite{BianconiBarabasi:01}.
\end{example}

By (\ref{eq:be3cont}), there is no solution $\lambda_0 \geq h$ to
\begin{equation*}
\int_{0}^h \frac{x g(x)}{\lambda_0 - x}\diff x = 1.
\end{equation*}
Instead, for $0 \leq a \leq b \leq h$, let
\begin{equation*}
\nu_{[a,b]} = h \int_{a}^b \frac{g(x)}{h - x} \diff x.
\end{equation*}
Note in particular that
\begin{equation*}
\nu_{[0,h]} = \int_{0}^h (h - x) \frac{g(x)}{h - x} \diff x +
\int_{0}^h \frac{x g(x)}{h - x} \diff x = 1 + \ical(h) < 2.
\end{equation*}
Also, for $n \geq 0$, let $M_{n,[a,b]}$ be the number of edges
with an endpoint of fitness in $[a,b]$ (counting twice edges with
two endpoints of fitness in $[a,b]$). For ease of notation, we
note $M_{n,x} := M_{n,[x,x]}$.

We prove the following.
\begin{theorem}[Continuous Case: Innovation-Pays-Off Phase]\label{thm:boseeinsteincont}
Assume $g$ is defined on $[0,h]$ with $h < +\infty$ and assume
further that $g(x) > 0$ for all $x \in (0,h)$ and
\begin{equation*}
\int_{0}^h \frac{x g(x)}{h - x}\diff x < 1.
\end{equation*}
Then it holds that for all $0 \leq a < b < h$,
\begin{equation}\label{eq:becontres1}
\frac{M_{n,[a,b]}}{n} \to \nu_{[a,b]},
\end{equation}
almost surely as $n \to +\infty$. Moreover, for $0 \leq a \leq h$,
we have
\begin{equation}\label{eq:becontres2}
\frac{M_{n,[a,h]}}{n} \to 2 - \nu_{[0,a]},
\end{equation}
almost surely as $n \to +\infty$.
\end{theorem}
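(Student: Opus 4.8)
The plan is to mimic the fit-get-richer argument (Theorem~\ref{thm:fitgetrichcont}), using the same discretization and the lower-bound coupling of Lemma~\ref{lem:couplingalonecont}, but now taking advantage of the fact that, since $\ical(h) < 1$, the discretized threshold $\tildelambda_0^I$ converges to $h$ rather than to a value $\lambda_0 > h$. First I would establish~\eqref{eq:becontres1}. Fix $0 \leq a < b < h$. For each truncation level $I$ form the set $\kcal^I \subseteq \{1,\dots,I\}$ of indices whose intervals $(\undf_i^I, \ovf_i^I)$ lie inside $[a,b]$, and apply Lemma~\ref{lem:couplingalonecont} to get
\begin{equation*}
\liminf_{n\to+\infty} \frac{M_{n,[a,b]}}{n} \geq \sum_{i\in\kcal^I} \tildenu_i^I = \sum_{i\in\kcal^I} \frac{\tildeq_i^I\,\tildelambda_0^I}{\tildelambda_0^I - \undf_i^I} \geq \sum_{i\in\kcal^I} \int_{\undf_i^I}^{\ovf_i^I} \frac{g(x)\,\tildelambda_0^I}{\tildelambda_0^I + \eps - x}\,\diff x.
\end{equation*}
Then I would prove the analogue of~\eqref{eq:lambda0cont}, namely $\tildelambda_0^I \to h$ as $I\to+\infty$: the lower bound $\tildelambda_0^I > h^I := \sup_{j\le I}\undf_j^I$ (with $h^I \to h$) is built into~\eqref{eq:bediscr}; for the upper bound, fix any $\ovlambda_0 > h$, bound each term of~\eqref{eq:bediscr} at $\ovlambda_0$ (the first sum by $\eps(1+G)\ovlambda_0^{-1} + \ical(\ovlambda_0)$ where $\ical(\ovlambda_0) < \ical(h) < 1$, the last term by $\sqrt{\eps}$ once $\eps$ is small), and conclude the sum is $<1$ for $I$ large, hence $\tildelambda_0^I < \ovlambda_0$ eventually, by monotonicity. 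Since $\ovlambda_0 > h$ is arbitrary, $\tildelambda_0^I \to h$. Letting $\eps = 1/I \to 0$, dominated (or monotone) convergence gives $\liminf_n M_{n,[a,b]}/n \geq h\int_a^b \frac{g(x)}{h-x}\,\diff x = \nu_{[a,b]}$.

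Next I would upgrade these lower bounds into equalities. The law of large numbers gives $M_{n,[0,h]}/n \to 1 + G = 2$ (using~\eqref{eq:intone}), since every edge has two endpoints and the new vertex's fitness contributes the extra mass. Now cover $[0,h)$ by finitely many intervals plus a tail: for any $0 \leq a < b < h$ and any $\delta>0$, pick a partition of $[0,h]$ into $[0,a]$, $[a,b]$, $[b,h-\delta]$, $[h-\delta,h]$. Applying the established $\liminf$ lower bound to each of the three subintervals of $[0,h-\delta]$ and summing, together with $\limsup_n M_{n,[h-\delta,h]}/n \leq 2$ trivially, we get
\begin{equation*}
2 = \lim_n \frac{M_{n,[0,h]}}{n} \geq \liminf_n \frac{M_{n,[0,a]}}{n} + \liminf_n \frac{M_{n,[a,b]}}{n} + \liminf_n \frac{M_{n,[b,h-\delta]}}{n} \geq \nu_{[0,a]} + \nu_{[a,b]} + \nu_{[b,h-\delta]}.
\end{equation*}
Letting $\delta \to 0$, the right side tends to $\nu_{[0,h]} = 1 + \ical(h)$, which is consistent but not yet an equality for $M_{n,[a,b]}$. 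To force equality, I would use that the $\liminf$ of a finite sum of nonnegative sequences is at most the sum of the $\limsup$'s of the complements; concretely, for a fixed finite partition $0 = t_0 < t_1 < \cdots < t_m = h$ the identity $\sum_j M_{n,[t_{j-1},t_j]}/n = 2$ (endpoints on partition points counted with care, or taken as a negligible set since $\qcal$ is continuous) combined with $\liminf_n M_{n,[t_{j-1},t_j]}/n \geq \nu_{[t_{j-1},t_j]}$ for all $j < m$ and the fact that $\sum_{j<m}\nu_{[t_{j-1},t_j]} = \nu_{[0,t_{m-1}]}$ can be made arbitrarily close to $1 + \ical(h)$ by refining near $h$, shows that the inequality $\liminf \geq \nu$ must in fact be an equality for every interval $[a,b]$ with $b < h$ (a strict deficit in one interval would force, in the limit, a compensating excess that cannot be absorbed anywhere among the other intervals bounded away from $h$, while $M_{n,[t_{m-1},h]}/n$ is free to carry exactly the missing mass $2 - (1+\ical(h)) = 1 - \ical(h)$). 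This yields~\eqref{eq:becontres1}.

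Finally,~\eqref{eq:becontres2} follows by subtraction: for $0 \leq a \leq h$, write $M_{n,[a,h]} = M_{n,[0,h]} - M_{n,[0,a)}$; again because $\qcal$ has a continuous density the single point $a$ has $\qcal$-measure zero, so $M_{n,[0,a)}/n$ and $M_{n,[0,a]}/n$ have the same limit, which by~\eqref{eq:becontres1} is $\nu_{[0,a]} = h\int_0^a \frac{g(x)}{h-x}\,\diff x$. Hence $M_{n,[a,h]}/n \to 2 - \nu_{[0,a]}$ almost surely. The main obstacle I anticipate is the upgrade from the one-sided coupling bound to an exact limit: the coupling of Lemma~\ref{lem:couplingalonecont} only ever produces $\liminf$ lower bounds (there is no usable upper-truncation in the innovation-pays-off regime, exactly because mass genuinely escapes to $h$), so equality has to be extracted indirectly from the conservation law $M_{n,[0,h]}/n \to 2$ together with the observation that all the ``missing'' mass $1 - \ical(h)$ can only accumulate in an arbitrarily small neighborhood of $h$; making this squeeze argument rigorous—uniformly over a refining sequence of partitions, and handling the partition-point fibers via the continuity of $g$—is the delicate part, whereas the $\tildelambda_0^I \to h$ computation is routine once the fit-get-richer proof is in hand.
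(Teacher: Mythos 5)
Your lower-bound half is sound and matches the paper: one reruns the discretization and the coupling of Lemma~\ref{lem:couplingalonecont} and checks that now $\tildelambda_0^I \to h$ (the paper compresses this into ``replace $\lambda_0$ with $h$ in the proof'' of Theorem~\ref{thm:fitgetrichcont}). The gap is in the upper bound, i.e.\ in upgrading $\liminf \geq \nu_{[a,b]}$ to a limit. Your conservation-law squeeze cannot do this. The facts at your disposal are $M_{n,[0,h]}/n \to 2$ and $\liminf_n M_{n,[t_{j-1},t_j]}/n \geq \nu_{[t_{j-1},t_j]}$ for every cell of a partition, and these lower bounds sum, over the \emph{whole} partition (including the last cell touching $h$, whose $\nu$ is finite here), to $\nu_{[0,h]} = 1+\ical(h) < 2$. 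Hence the best they yield for a fixed $[a,b]$ with $b<h$ is
\begin{equation*}
\limsup_{n\to+\infty} \frac{M_{n,[a,b]}}{n} \;\leq\; 2 - \bigl(\nu_{[0,h]} - \nu_{[a,b]}\bigr) \;=\; \nu_{[a,b]} + \bigl(1 - \ical(h)\bigr),
\end{equation*}
which misses by exactly the condensate fraction. Nothing in the conservation law plus the liminf bounds forces the missing mass $1-\ical(h)$ to accumulate near $h$ rather than, say, partly on $[a,b]$; that localization is precisely the assertion of the theorem, so it cannot be ``extracted indirectly'' this way, no matter how the partition is refined. (The analogous squeeze does work in Theorem~\ref{thm:fitgetrichcont}, and the paper uses it there, only because in that phase $\nu_{[0,h]} = 1+G = 2$, i.e.\ there is no slack.)

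The missed idea is exactly the one you rule out: there \emph{is} a usable upper-bound comparison in this regime. The paper couples the $(\fcal,\qcal)$-chain with a modified chain whose fitness density is $g_\eps = g\,\onebf_{[0,h-\eps]}$, i.e.\ the distribution truncated just below $h$ (total mass $G_\eps<1$, which is why Theorem~\ref{thm:fitgetrichcont} was deliberately stated allowing $G<1$). Removing all fitnesses above $h-\eps$ only removes competitors, so a coupling in the spirit of Lemma~\ref{lem:couplingalone} gives $M_{n,[a,b]} \leq M^{(\eps)}_{n,[a,b]}$ for all $[a,b]\subseteq[0,h-\eps]$. By Example~\ref{ex:fitgetrichcont} the modified chain is in the fit-get-richer phase, so Theorem~\ref{thm:fitgetrichcont} applies to it with parameter $\lambda_0^{(\eps)}$ solving $\int_0^{h-\eps} x g(x)/(\lambda_0^{(\eps)}-x)\,\diff x = 1$; since $\ical(h)<1$ one has $h-\eps \leq \lambda_0^{(\eps)} < h$, hence $\lambda_0^{(\eps)}\to h$ and $\limsup_n M_{n,[a,b]}/n \leq \lambda_0^{(\eps)}\int_a^b g(x)/(\lambda_0^{(\eps)}-x)\,\diff x \to \nu_{[a,b]}$ as $\eps\to 0$. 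With that upper bound in hand, your treatment of \eqref{eq:becontres2} by subtraction (or directly from \eqref{eq:becontres1} and $M_{n,[0,h]}=2(n+1)$) is unproblematic; without it, the proof of \eqref{eq:becontres1} is incomplete.
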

\begin{proof}
The convergence (\ref{eq:becontres2}) follows trivially from
(\ref{eq:becontres1}). Also, from the proof of
Theorem~\ref{thm:fitgetrichcont} it follows that
\begin{equation*}
\liminf_{n \to +\infty} \frac{M_{n,[a,b]}}{n} \geq \nu_{[a,b]},
\end{equation*}
almost surely for all $0 \leq a < b \leq h$ (replace $\lambda_0$
with $h$ in the proof).

To obtain an upper bound, we consider the modified chain with
fitness distribution $\qcal_\eps$ with
\begin{equation*}
g_\eps(x) = \left\{
\begin{array}{ll}
g(x), & 0 \leq x \leq h - \eps,\\
0, & x > h-\eps.
\end{array}
\right.
\end{equation*}
It is clear that we can couple this modified chain with the
original one so that for all $0 \leq a \leq b \leq h-\eps$
\begin{equation*}
M_{n,[a,b]} \leq M^{(\eps)}_{n,[a,b]}.
\end{equation*}
(Proceed similarly to the proof of Lemma~\ref{lem:couplingalone}.)
Also, from Example~\ref{ex:fitgetrichcont}, it follows that the
modified chain is in the Fit-Get-Richer phase which allows to
apply Theorem~\ref{thm:fitgetrichcont} (this is the reason we
allowed $G < 1$ in the proof of Theorem~\ref{thm:fitgetrichcont}).
Therefore, for all $0 \leq a \leq b \leq h-\eps$,
\begin{equation*}
\limsup_{n \to +\infty} \frac{M_{n,[a,b]}}{n} \leq
\lambda_0^{(\eps)} \int_{a}^b \frac{g(x)}{\lambda^{(\eps)}_0 -
x}\diff x,
\end{equation*}
where $\lambda_0^{(\eps)} \geq h-\eps$ is a solution to
\begin{equation*}
\int_0^{h-\eps} \frac{xg(x)}{\lambda_0^{(\eps)} - x}\diff x = 1.
\end{equation*}

We claim that $\lambda_0^{(\eps)} \to h$ as $\eps \to 0$ which
proves (\ref{eq:becontres1}). Indeed, note that
\begin{eqnarray*}
\int_{0}^{h-\eps} \frac{xg(x)}{h - x} \leq \int_{0}^{h}
\frac{xg(x)}{h - x} < 1.
\end{eqnarray*}
Therefore, $h - \eps \leq \lambda_0^{(\eps)} < h$. This concludes
the proof.
\end{proof}

\subsection{Unbounded Case}\label{sec:app:continuous_unbounded}

The unbounded fitness case also follows easily from the previous
proof (see also the proof in the discrete case). Therefore, we
state the result without proof.
\begin{theorem}[Continuous Case: Unbounded Case]\label{thm:unboundedcont}
Assume $g$ is defined on $[0,+\infty)$. Assume further that $g(x)
> 0$ for all $x \in (0,+\infty)$ and
\begin{equation*}
\int_0^{+\infty} g(x) = 1.
\end{equation*}
Then it holds that for all $0 \leq a < b < +\infty$,
\begin{equation*}
\frac{M_{n,[a,b]}}{n} \to \int_a^b g(x) \diff x,
\end{equation*}
almost surely as $n \to +\infty$. Moreover, for $0 \leq a <
+\infty$, we have
\begin{equation*}
\frac{M_{n,[a,+\infty)}}{n} \to 2 - \int_0^a g(x) \diff x,
\end{equation*}
almost surely as $n \to +\infty$.
\end{theorem}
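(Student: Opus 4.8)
The plan is to prove Theorem~\ref{thm:unboundedcont} by the same two-sided argument used for Theorem~\ref{thm:unbounded} in the discrete case and for Theorem~\ref{thm:boseeinsteincont} in the continuous case: obtain a trivial lower bound on $M_{n,[a,b]}$ by counting arrival edges, and an upper bound by truncating the fitness density at a large level $H$, applying the fit-get-richer result to the truncated chain, and then letting $H\to+\infty$ (this is exactly the proof of Theorem~\ref{thm:boseeinsteincont} with $h-\eps$ replaced by the truncation level $H$ and $\eps\to 0$ replaced by $H\to+\infty$). The second convergence, for $M_{n,[a,+\infty)}$, will then follow from the first together with the conservation identity $M_{n,[0,+\infty)}=2(n+1)$, since $G_n$ has $n+1$ edges, each contributing $2$ to the total degree.

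For the \emph{lower bound}, note that in $G_n$ every non-root vertex has degree at least $1$ and the root has degree $2$, so, writing $V_{n,[a,b]}$ for the number of vertices of $G_n$ with fitness in $[a,b]$, we have $M_{n,[a,b]}\ge V_{n,[a,b]}$. Since the fitnesses of the vertices are i.i.d.\ with density $g$, the strong law of large numbers gives $V_{n,[a,b]}/n\to \int_a^b g(x)\diff x$ almost surely, hence $\liminf_{n\to+\infty} M_{n,[a,b]}/n\ge \int_a^b g(x)\diff x$. This is the analogue of the pure-counting chain used for the lower bound in the proof of Theorem~\ref{thm:unbounded}.

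For the \emph{upper bound}, fix $H>b$ and consider the modified chain with density $g_H = g\cdot\ind_{[0,H]}$, which has mass $G_H=\int_0^H g(x)\diff x<1$. Couple it with the $(\fcal,\qcal)$-chain exactly as in the proof of Theorem~\ref{thm:boseeinsteincont} (equivalently, as the upper $I$-truncation of Lemma~\ref{lem:couplingalone}), so that $M_{n,[a,b]}\le M^{(H)}_{n,[a,b]}$ almost surely for every $n$. Since $g$ is continuous and strictly positive near $H<+\infty$, Example~\ref{ex:fitgetrichcont} applied to $g_H$ on $[0,H]$ shows $\int_0^H \frac{x g_H(x)}{H-x}\diff x=+\infty$, so the modified chain is in the fit-get-richer phase, and Theorem~\ref{thm:fitgetrichcont} (which was deliberately stated to allow total mass $<1$) yields $M^{(H)}_{n,[a,b]}/n\to \lambda_0^{(H)}\int_a^b \frac{g(x)}{\lambda_0^{(H)}-x}\diff x$ almost surely, where $\lambda_0^{(H)}\ge H$ solves $\int_0^H \frac{x g(x)}{\lambda_0^{(H)}-x}\diff x=1$. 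Because $x\le b<H\le\lambda_0^{(H)}$ on $[a,b]$, we have $\lambda_0^{(H)}/(\lambda_0^{(H)}-x)\le (1-b/H)^{-1}$ there, so $\limsup_{n\to+\infty} M_{n,[a,b]}/n\le (1-b/H)^{-1}\int_a^b g(x)\diff x$; letting $H\to+\infty$ gives $\limsup_{n\to+\infty} M_{n,[a,b]}/n\le \int_a^b g(x)\diff x$. Combined with the lower bound, this proves $M_{n,[a,b]}/n\to\int_a^b g(x)\diff x$ for all $0\le a<b<+\infty$.

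Finally, for $0\le a<+\infty$, write $M_{n,[a,+\infty)}=M_{n,[0,+\infty)}-M_{n,[0,a)}=2(n+1)-M_{n,[0,a)}$; applying the first part to $[0,a-\delta]$ and to $[0,a]$ and letting $\delta\to0$, and using that $g$ has no atom at $a$ so $M_{n,\{a\}}=0$ almost surely, gives $M_{n,[0,a)}/n\to\int_0^a g(x)\diff x$, whence $M_{n,[a,+\infty)}/n\to 2-\int_0^a g(x)\diff x$. The only step needing genuine care is the step-by-step verification of the domination $M_{n,[a,b]}\le M^{(H)}_{n,[a,b]}$, but this is carried out exactly as in Lemma~\ref{lem:couplingalone} and the corresponding passage in the proof of Theorem~\ref{thm:boseeinsteincont}; everything else reduces to the strong law and elementary monotonicity estimates.
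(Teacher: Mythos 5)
Your proposal is correct and is exactly the argument the paper has in mind: the paper states Theorem~\ref{thm:unboundedcont} without proof, remarking only that it ``follows easily from the previous proof (see also the proof in the discrete case),'' and your write-up supplies precisely those ingredients --- the counting lower bound $M_{n,[a,b]}\ge V_{n,[a,b]}$ as in Theorem~\ref{thm:unbounded}, and the upper bound by truncating the density at $H$, coupling as in Lemma~\ref{lem:couplingalone}/Theorem~\ref{thm:boseeinsteincont}, and invoking Theorem~\ref{thm:fitgetrichcont} (with total mass $G_H<1$, as the paper explicitly permits) together with $\lambda_0^{(H)}\ge H\to+\infty$. The deduction of the second limit from the identity $M_{n,[0,+\infty)}=2(n+1)$ is likewise the intended one, so no gaps remain.
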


\end{appendix}



\end{document}